\pdfoutput=1
\documentclass[11pt]{article}
\usepackage{authblk}
\usepackage[toc,page]{appendix}

\usepackage{color}
\usepackage{helvet}         
\usepackage{courier}        
\usepackage{type1cm}        
%
\usepackage{framed}
\usepackage{tikz}
\usepackage{makeidx}         
\usepackage{graphicx}        
\usepackage{multicol}        
\usepackage[bottom]{footmisc}
\usepackage{amsmath}
\usepackage{amssymb}
\usepackage{bbold}
\usepackage{amsthm}
\usepackage{subcaption}
\usepackage{sidecap}
\usepackage{floatrow}
\usepackage{pdflscape}
\usepackage{enumitem}
\usepackage{comment}
\usepackage[font=small]{caption}
\usepackage[top=2cm, bottom=2cm, left=2cm, right=2cm]{geometry}
\newtheorem{theorem}{Theorem}
\newtheorem{corollary}[theorem]{Corollary}

\newtheorem{lemma}[theorem]{Lemma}

\newtheorem{proposition}[theorem]{Proposition}

\numberwithin{theorem}{section}
\numberwithin{figure}{section}
\numberwithin{equation}{section}

\DeclareMathOperator{\SLE}{SLE}





\begin{document}

\title{Loop-Erased Random Walk Branch of \\Uniform Spanning Tree in Topological Polygons}

\author{Mingchang Liu\thanks{liumc\_prob@163.com}}
\author{Hao Wu\thanks{hao.wu.proba@gmail.com. Funded by Beijing Natural Science Foundation (JQ20001, Z180003).}}
\affil{Tsinghua University, China}

\date{}

%
%
\maketitle
\begin{center}
\abstract{
We consider uniform spanning tree (UST) in topological polygons with $2N$ marked points on the boundary with alternating boundary conditions. In~\cite{LiuPeltolaWuUST}, the authors derive the scaling limit of the Peano curve in the UST. They are variants of SLE$_8$. In this article, we derive the scaling limit of the loop-erased random walk branch (LERW) in the UST. They are variants of SLE$_2$. The conclusion is a generalization of~\cite[Theorem~1.6]{HanLiuWuUST} where the authors derive the scaling limit of the LERW branch of UST when $N=2$. When $N=2$, the limiting law is SLE$_2(-1,-1; -1, -1)$. However, the limiting law is nolonger in the family of SLE$_2(\rho)$ process as long as $N\ge 3$. 
\\
\noindent\textbf{Keywords}: uniform spanning tree, loop-erased random walk, Schramm-Loewner evolution.\\
\noindent\textbf{MSC}: 60J67}
\end{center}

\newcommand{\eps}{\epsilon}
\newcommand{\ov}{\overline}
\newcommand{\U}{\mathbb{U}}
\newcommand{\T}{\mathbb{T}}
\newcommand{\HH}{\mathbb{H}}
\newcommand{\LA}{\mathcal{A}}
\newcommand{\LB}{\mathcal{B}}
\newcommand{\LC}{\mathcal{C}}
\newcommand{\LD}{\mathcal{D}}
\newcommand{\LF}{\mathcal{F}}
\newcommand{\LK}{\mathcal{K}}
\newcommand{\LE}{\mathcal{E}}
\newcommand{\LG}{\mathcal{G}}
\newcommand{\LL}{\mathcal{L}}
\newcommand{\LM}{\mathcal{M}}
\newcommand{\LQ}{\mathcal{Q}}
\newcommand{\LP}{\mathcal{P}}
\newcommand{\LR}{\mathcal{R}}
\newcommand{\LT}{\mathcal{T}}
\newcommand{\LS}{\mathcal{S}}
\newcommand{\LU}{\mathcal{U}}
\newcommand{\LV}{\mathcal{V}}
\newcommand{\LX}{\mathcal{X}}
\newcommand{\LY}{\mathcal{Y}}
\newcommand{\PartF}{\mathcal{Z}}
\newcommand{\LH}{\mathcal{H}}
\newcommand{\R}{\mathbb{R}}
\newcommand{\C}{\mathbb{C}}
\newcommand{\N}{\mathbb{N}}
\newcommand{\Z}{\mathbb{Z}}
\newcommand{\E}{\mathbb{E}}
\newcommand{\PP}{\mathbb{P}}
\newcommand{\QQ}{\mathbb{Q}}
\newcommand{\A}{\mathbb{A}}
\newcommand{\one}{\mathbb{1}}
\newcommand{\bn}{\mathbf{n}}
\newcommand{\MR}{MR}
\newcommand{\cond}{\,|\,}
\newcommand{\la}{\langle}
\newcommand{\ra}{\rangle}
\newcommand{\tree}{\Upsilon}
\newcommand{\prob}{\mathbb{P}}
\renewcommand{\Im}{\mathrm{Im}}
\renewcommand{\Re}{\mathrm{Re}}
\newcommand{\ii}{\mathfrak{i}}
\newcommand{\by}{\boldsymbol{y}}
\global\long\def\ud{\mathrm{d}}

\section{Introduction}
We derive the scaling limit of loop-erased random walk branch (LERW) of uniform spanning tree (UST) in topological polygons. 
A (topological) polygon $(\Omega; x_1, \ldots, x_p)$ is a bounded simply connected domain $\Omega\subset\C$ with distinct boundary points $x_1, \ldots, x_p$ in counterclockwise order. We always assume $\partial\Omega$ is $C^1$ and simple. We denote by $(x_1x_2)$ the boundary arc between $x_1$ and $x_2$ in counterclockwise order.  
In this article, we focus on polygons with even number of marked points on the boundary $(\Omega; x_1, \ldots, x_{2N})$ with $N\ge 1$. 
Suppose $(\Omega^{\delta}; x_1^{\delta}, \ldots, x_{2N}^{\delta})$ is an approximation of $(\Omega; x_1, \ldots, x_{2N})$ on $\delta\Z^2$. 
We consider uniform spanning tree (UST) on $\Omega^{\delta}$ with alternating boundary conditions: the edges in the boundary arcs $(x_{2i-1}^{\delta}x_{2i}^{\delta})$ are forced to be contained in the tree for $i\in\{1, \ldots, N\}$. 
There are finitely many such trees, we consider uniform distribution on these trees. 
Let $\LT_{\delta}$ be a uniformly chosen tree on $\Omega^{\delta}$ with such alternating boundary conditions. There exists a branch $\gamma_{\delta}$ in $\LT_{\delta}$ connecting the arc $(x_1^{\delta}x_2^{\delta})$ to the arc $(x_{2N-1}^{\delta}x_{2N}^{\delta})$. The goal is to derive the limiting law of $\gamma_{\delta}$. In order to properly state the conclusion, we first introduce the following notions.

We first introduce the convergence of polygons. A curve is defined by a continuous map from $[0, 1]$ to $\C$. Let $\mathcal{C}$ be the space of unparameterized curves in $\C$.  Define the metric on $\mathcal{C}$ as follows:
\begin{align}\label{eqn::curves_metric}
d(\gamma_1, \gamma_2):=\inf\sup_{t\in[0,1]}\left|\hat{\gamma}_1(t)-\hat{\gamma}_2(t)\right|,
\end{align}
where the infimum is taken over all the choices of  parameterizations  $\hat{\gamma}_1$ and $\hat{\gamma}_2$ of $\gamma_1$ and $\gamma_2$. 
Fix a polygon $(\Omega;x_1,\ldots,x_{2N})$. We consider a sequence of discrete polygons $(\Omega^\delta;x_1^\delta,\ldots,x_{2N}^\delta)$ on $\delta\Z^2$ converge to $(\Omega;x_1,\ldots,x_{2N})$ in the following sense:  
\begin{align}\label{eqn::convpoly}
(x_i^\delta x_{i+1}^\delta)\text{ converges to }(x_ix_{i+1})\text{ in the metric}~\eqref{eqn::curves_metric},\quad\text{as }\delta\to 0,\quad\text{for }1\le i\le 2N.
\end{align}

Next, we introduce a particular conformal map $\phi$ on $\Omega$ which plays an essential role in this article. 

\begin{lemma}{\cite[Lemma~4.8]{LiuPeltolaWuUST}}
\label{lem::uniformizingcm}
For a polygon $(\Omega; x_1, \ldots, x_{2N})$, there exists a unique conformal map $\phi$ from $\Omega$ onto a rectangle of unit width with horizontal slits such that it maps the four points $(x_1, x_2, x_{2N-1}, x_{2N})$ to the four corners of the rectangle with $\phi(x_1)=0$ and it maps $(x_{2i-1}x_{2i})$ to horizontal slits for $i\in\{2,\ldots, N-1\}$. We denote by $K:=\Im\phi(x_{2N})$ the height of the rectangle. See Figure~\ref{fig::slitrectangle}. 
\end{lemma}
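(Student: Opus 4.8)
The plan is to construct $\phi$ as the solution of a mixed Dirichlet--Neumann boundary value problem and to identify the target domain as a slit rectangle. Consider the harmonic function $u$ on $\Omega$ that solves the boundary value problem with Dirichlet data $u\equiv 0$ on the arc $(x_1x_2)$, $u\equiv 1$ on the arc $(x_{2N-1}x_{2N})$, and Neumann (zero normal derivative) data on all the remaining arcs $(x_2x_3), (x_3x_4),\dots,(x_{2N-2}x_{2N-1})$ together with $(x_{2N}x_1)$. Since $\partial\Omega$ is $C^1$ and simple, this mixed problem has a unique solution with $0<u<1$ in $\Omega$, and $u$ is smooth up to the boundary away from the finitely many corner points $x_3,\dots,x_{2N-2}$ (the points where a Neumann arc meets a Neumann arc) and $x_1,x_2,x_{2N-1},x_{2N}$ (where Dirichlet meets Neumann). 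Let $v$ be a harmonic conjugate of $u$, locally defined, and set $\phi:=v+\ii u$ so that $u=\Im\phi$; this is the standard device making $\phi$ a candidate conformal map whose imaginary part is the equilibrium potential. I would normalize the additive constant in $v$ so that $v=0$ at $x_1$, i.e.\ $\phi(x_1)=0$.

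Next I would verify that $\phi$ has the claimed image. The level sets $\{\Im\phi=0\}$ and $\{\Im\phi=1\}$ are exactly the arcs $(x_1x_2)$ and $(x_{2N-1}x_{2N})$, which therefore map into the horizontal lines $\Im w=0$ and $\Im w=1$; this forces the rectangle to have height one in the $u$-direction, and $K:=\Im\phi(x_{2N})=1$ would actually be the height only after we rescale — more precisely, following the statement, we fix the \emph{width} to be $1$ by dividing $v$ by its total increment along $(x_1x_2)$ (the conjugate period), and then $K$ is whatever the resulting height is. On each Neumann arc, $\partial_n u=0$ means $u$ is constant along the image of the arc in the conjugate variable; since $\partial_n u=0$ is exactly the condition $\partial_\tau v$ nonzero with $\partial_n v=\partial_\tau u$, the harmonic conjugate $v$ is constant along... rather, the arc maps to a horizontal segment, i.e.\ a horizontal slit at height equal to the (constant) value of $u$ there. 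The outer arcs $(x_2x_3)$, $(x_{2N}x_1)$, $(x_{2N-2}x_{2N-1})$ become the top/bottom/sides appropriately so that the outer boundary is the rectangle $[0,1]\times\text{(width)} $, while each interior arc $(x_{2i-1}x_{2i})$, $2\le i\le N-1$, becomes an interior horizontal slit. That $\phi$ is a conformal bijection onto this slit rectangle follows from the argument principle / boundary correspondence: $\phi$ extends continuously to $\overline\Omega$, is locally injective, and traces the boundary of the slit rectangle monotonically, hence is a bijection by the standard degree argument for conformal maps onto slit domains.

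Finally, uniqueness. Suppose $\phi_1,\phi_2$ are two such maps. Then $\psi:=\phi_2\circ\phi_1^{-1}$ is a conformal automorphism of the slit rectangle sending corners to corners and slits to slits. An automorphism of a (nondegenerate) rectangular slit domain that fixes the rectangle's sides setwise must be the identity: it preserves the harmonic measure structure, in particular it preserves the function $\Im w$ (as that is the unique harmonic function with the prescribed Dirichlet/Neumann data, which is conformally natural), so $\Im\psi(w)=\Im w$; hence $\psi(w)=w+c$ for a real constant $c$, and $\phi_1(x_1)=\phi_2(x_1)=0$ forces $c=0$. Therefore $\phi_1=\phi_2$. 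I expect the main obstacle to be the boundary regularity bookkeeping at the corner points — ensuring $\phi$ extends continuously (indeed injectively) to the corners $x_3,\dots,x_{2N-2}$ where two slit arcs meet, and verifying that the images are genuinely \emph{slits} (degenerate two-sided segments) rather than thin rectangles — but since this is quoted from \cite[Lemma~4.8]{LiuPeltolaWuUST}, I would cite the local behaviour of harmonic functions with mixed boundary data near such corners to close that gap.
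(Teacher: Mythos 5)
This lemma is quoted from \cite[Lemma~4.8]{LiuPeltolaWuUST}, so the present paper offers no proof of its own; still, your sketch contains a genuine error in the boundary value problem that would derail the construction for $N\ge 3$.

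You impose Neumann data $\partial_n u=0$ for $u=\Im\phi$ on \emph{all} arcs other than $(x_1x_2)$ and $(x_{2N-1}x_{2N})$, in particular on the slit arcs $(x_3x_4),\dots,(x_{2N-3}x_{2N-2})$. That is the wrong condition there. By Cauchy--Riemann, $\partial_n u=0$ is equivalent to $\partial_\tau v=0$, i.e.\ $\Re\phi$ constant, which means the arc is sent to a \emph{vertical} segment; it is $u=\Im\phi$ \emph{constant} (Dirichlet with an a priori unknown value $h_i$) that sends an arc to a horizontal slit. Your own Cauchy--Riemann discussion in the second paragraph is inverted on exactly this point: from Neumann for $u$ you conclude ``the arc maps to a horizontal segment at height equal to the (constant) value of $u$ there,'' but under Neumann $u$ is not constant, and the image segment is vertical. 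Worse, if $\partial_n u=0$ held on every arc of $(x_2x_{2N-1})$, continuity of $\Re\phi$ across $x_3,x_4,\dots,x_{2N-2}$ would force $\Re\phi$ to take a single constant value on the whole arc $(x_2x_{2N-1})$, so the entire middle of the boundary would collapse onto one vertical slit --- not the picture of Figure~\ref{fig::slitrectangle}.

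The correct problem (and the reason the lemma is nontrivial enough to merit citation) has $u=\Im\phi$ equal to an \emph{unknown} constant $h_i$ on each $(x_{2i-1}x_{2i})$, $2\le i\le N-1$, Dirichlet values $0$ and $K$ on $(x_1x_2)$ and $(x_{2N-1}x_{2N})$, Neumann only on the even arcs $(x_{2i}x_{2i+1})$ and $(x_{2N}x_1)$, together with a zero-net-flux (period) condition $\int_{(x_{2i-1}x_{2i})}\partial_n u=0$ for each slit arc, so that the harmonic conjugate $v=\Re\phi$ is single-valued and the slit is genuinely degenerate. This is not a standard mixed Dirichlet--Neumann problem; determining the constants $h_i$ (equivalently the Schwarz--Christoffel parameters $\mu^{(\ell)}$ in~\eqref{eqn::conformalmap_slitrect}) is precisely the content of the linear-algebraic Lemma~\ref{lem::parameters}, via invertibility of the matrix $R$. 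Your proposal skips exactly this part, and the simplified BVP you pose does not have the slit rectangle as its image. Your uniqueness paragraph is salvageable, but it is simpler to note directly that $\phi_2\circ\phi_1^{-1}$ is a conformal automorphism of a simply connected domain fixing at least three boundary points (the corners), hence the identity.
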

\begin{figure}[ht!]
\begin{center}
\includegraphics[width=0.6\textwidth]{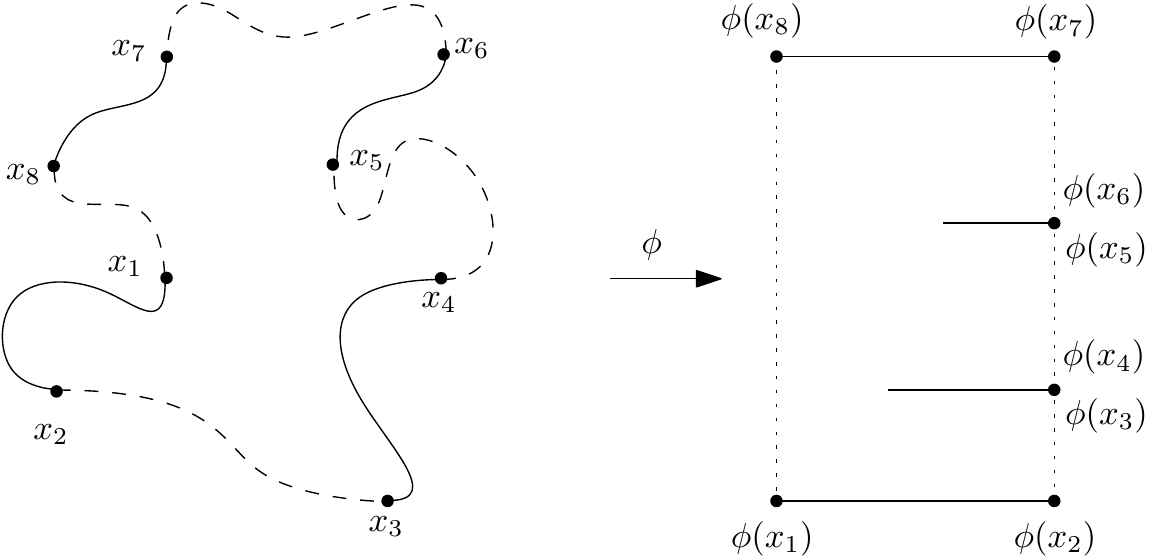}
\end{center}
\caption{\label{fig::slitrectangle} For a polygon $(\Omega; x_1, \ldots, x_{2N})$, there exists a unique conformal map $\phi$ from $\Omega$ onto a rectangle of unit width with horizontal slits as in Lemma~\ref{lem::uniformizingcm}. 
If $\Omega=\HH$ with marked points $y_1<\cdots<y_{2N}$, we denote the corresponding conformal map by $f(\cdot; y_1, \ldots, y_{2N})$. }
\end{figure}

When $\Omega=\HH$ with marked points $y_1<\cdots<y_{2N}$, we denote by $f(\cdot;y_1, \ldots, y_{2N})$ the conformal map in Lemma~\ref{lem::uniformizingcm}. We define partition function $\PartF$ as follows: for $y_1<w<y_2<\cdots<y_{2N}$, 
\begin{equation}\label{eqn::partitionfunction}
\PartF(w; y_1, \ldots, y_{2N}):=\partial_z f(z; y_1, \ldots, y_{2N})|_{z=w}. 
\end{equation}

Now, we are ready to state our conclusion. 

\begin{theorem}\label{thm::lerwconv}
Fix a polygon $(\Omega; x_1, \ldots, x_{2N})$ such that $\partial\Omega$ is $C^1$ and simple. 
Suppose that a sequence of polygons $(\Omega^{\delta}; x_1^{\delta}, \ldots, x_{2N}^{\delta})$ converges to $(\Omega; x_1, \ldots, x_{2N})$ as~\eqref{eqn::convpoly}. 
Consider the UST $\LT_{\delta}$ in $(\Omega^{\delta}; x_1^{\delta}, \ldots, x_{2N}^{\delta})$ with alternating boundary conditions and denote by $\gamma_{\delta}$ the branch in $\LT_{\delta}$ connecting $(x_1^{\delta}x_2^{\delta})$ to $(x_{2N-1}^{\delta}x_{2N}^{\delta})$ and we stop it when it hits $\cup_{i=2}^N(x_{2i-1}^{\delta}x_{2i}^{\delta})$. Then the law of $\gamma_{\delta}$ converges weakly to a continuous curve $\gamma$ in $(\Omega; x_1, \ldots, x_{2N})$ whose law is characterized by the following properties. Denote by $\LX=\gamma\cap (x_1x_2)$. 
\begin{enumerate}[label=(\arabic*)]
\item \label{item::thm1} Denote by $\phi$ the conformal map in Lemma~\ref{lem::uniformizingcm}. The law of $\phi(\LX)$ is uniform in $(0,1)$. 
\item \label{item::thm2} Let $\varphi$ be any conformal map from $\Omega$ onto $\HH$ such that $\varphi(x_1)<\cdots<\varphi(x_{2N})$. 
Given $\LX$, the conditional law of $\gamma$ is the image under $\varphi^{-1}$ of Loewner chain with the following driving function up to the first time that $\varphi(x_1)$ or $\varphi(x_2)$ is swallowed: 
\begin{equation}\label{eqn::lerw_loewner}
\begin{cases}
\ud W_t=\sqrt{2}\ud B_t+2(\partial_w\log \PartF)(W_t; V_t^1,\ldots,V_t^{2N})\ud t, \quad W_0=\varphi(\LX); \\
\ud V_t^j=\frac{2\ud t}{V_t^j-W_t},\quad V_0^j=\varphi(x_j), \quad j\in\{1, \ldots, 2N\},
\end{cases}
\end{equation}
where $\PartF$ is the partition function defined in~\eqref{eqn::partitionfunction} and $(B_t, t\ge 0)$ is standard one-dimensional Brownian motion. 
\end{enumerate}
\end{theorem}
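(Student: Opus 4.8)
\noindent\emph{Sketch of the strategy.} The plan is to combine Wilson's algorithm with a discrete martingale observable and a precompactness argument, in the spirit of the proofs of convergence of loop-erased random walk to $\SLE_2$ and of the general framework for scaling limits of uniform spanning tree branches, and then to identify every subsequential limit through the Loewner equation. Throughout, the RSW-type crossing estimates for the UST established in~\cite{LiuPeltolaWuUST} will be the workhorse controlling the discrete geometry.

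First I would reduce $\gamma_\delta$ to a loop-erased random walk. Contracting each forced arc $(x_{2i-1}^\delta x_{2i}^\delta)$ to a single vertex $a_i$, the conditional law of $\LT_\delta$ on the remaining edges is a UST on the contracted graph, so by Wilson's algorithm $\gamma_\delta$ has the law of the loop erasure of a simple random walk from $a_1$ stopped upon hitting $\{a_2,\ldots,a_N\}$; in particular $\gamma_\delta$ enjoys the domain Markov property. For vertices $z$ of $\Omega^\delta$ I would then build a family of discrete observables $M_t^\delta(z)$ in the slit domain $\Omega^\delta\setminus\gamma_\delta[0,t]$ out of discrete harmonic measure, designed to be a discrete counterpart of the conformally covariant function determined by $\partial_w f$ for the domain $\Omega\setminus\gamma[0,t]$ uniformised as in Lemma~\ref{lem::uniformizingcm}; this is the natural candidate because for $\kappa=2$ the boundary scaling exponent equals $1$, so the continuum observable transforms as a conformal-map derivative, which is precisely the form of $\PartF$ in~\eqref{eqn::partitionfunction}. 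By the domain Markov property, $t\mapsto M_t^\delta(z)$ is a martingale.

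The central step, which I expect to be the main obstacle, is to prove that $M_t^\delta(z)$ converges, locally uniformly in $z$ away from the marked points, to a conformally covariant continuum observable $M_t(z)$ that can be expressed explicitly through $\PartF$ and the uniformising maps of the slit domains $\Omega\setminus\gamma[0,t]$. This requires combining the convergence of discrete harmonic measure and of the discrete Green's function to their continuum counterparts in $C^1$ domains with quantitative control---via the RSW estimates of~\cite{LiuPeltolaWuUST}---that the loop-erased branch stays away from the forced arcs and from the marked points in a controlled way. The difficulty is to make all of this uniform up to the boundary and, above all, in neighbourhoods of the $2N$ marked points and of the corners and slits of the target rectangle, where the conformal maps degenerate; this is where most of the technical work lies.

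Finally I would assemble the pieces. Precompactness of $\{\gamma_\delta\}$ in the metric~\eqref{eqn::curves_metric} follows from the Kemppainen--Smirnov crossing condition for uniform spanning tree branches, again via the RSW estimates of~\cite{LiuPeltolaWuUST}, so that every subsequential limit $\gamma$ is a continuous curve admitting a Loewner description. Fix a conformal map $\varphi\colon\Omega\to\HH$ as in the statement, parametrise $\varphi(\gamma)$ by half-plane capacity, and let $W_t$ be its driving function, with $V_t^j$ the evolution of $\varphi(x_j)$. The convergence $M_t^\delta\to M_t$ together with the martingale property of $M_t^\delta$ shows that $t\mapsto M_t(z)$ is a martingale for $\gamma$; applying It\^o's formula to the explicit continuum expression for $M_t$ and requiring its drift to vanish forces $\ud\langle W\rangle_t=2\,\ud t$ and the drift of $W_t$ to equal $2(\partial_w\log\PartF)(W_t;V_t^1,\ldots,V_t^{2N})\,\ud t$, up to the first time $\varphi(x_1)$ or $\varphi(x_2)$ is swallowed; this is~\eqref{eqn::lerw_loewner}, proving~\ref{item::thm2}. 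Evaluating the observable at $t=0$---equivalently, reading the marginal law of the starting vertex $\gamma_\delta\cap(x_1^\delta x_2^\delta)$ directly from Wilson's algorithm and the convergence of discrete harmonic measure---identifies the density of $\varphi(\LX)$ on $(\varphi(x_1),\varphi(x_2))$ as proportional to $\PartF(\,\cdot\,;\varphi(x_1),\ldots,\varphi(x_{2N}))=\partial_w f(w;\varphi(x_1),\ldots,\varphi(x_{2N}))$. Since the uniqueness in Lemma~\ref{lem::uniformizingcm} gives $\phi=f(\,\cdot\,;\varphi(x_1),\ldots,\varphi(x_{2N}))\circ\varphi$, the change of variables $s=f(w)$ transforms this density into Lebesgue measure on $(0,1)$, which is~\ref{item::thm1}. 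Since~\ref{item::thm1} and~\ref{item::thm2} determine the law of $\gamma$ uniquely (by weak existence and uniqueness for the stochastic differential equation~\eqref{eqn::lerw_loewner}), the whole family $\gamma_\delta$ converges weakly to $\gamma$.
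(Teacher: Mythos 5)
Your high-level strategy mirrors the paper's: Wilson's algorithm reduces $\gamma_\delta$ to a loop-erased random walk, tightness (Lemma~\ref{lem::tight}) gives subsequential limits, a discrete harmonic-measure martingale observable supplies the Loewner data, Itô's formula identifies the driving function, and the starting-point law follows by the change of variables $\phi = f\circ\varphi$ (Proposition~\ref{prop::startingpoint}, Corollary~\ref{coro::LX}). So the scaffolding is right.

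However, your sketch locates the main difficulty in the wrong place and skips the two ingredients that actually make the Itô step close. You describe the observable as ``designed to be a discrete counterpart of the conformally covariant function determined by $\partial_w f$'' and say the work lies in making discrete-to-continuum convergence uniform near the corners and slits. In fact the natural discrete observable is the SRW hitting probability $\lambda^\delta_{c^\delta,d^\delta}$ of a far boundary arc, and the discrete-to-continuum convergence is dispatched quickly by the Chelkak--Wan tools. The real work, which the paper explicitly flags as ``the bulk part of the analysis,'' is continuum: establishing existence, uniqueness, and an explicit Schwarz--Christoffel-type formula for the holomorphic function $h$ with mixed Dirichlet/Neumann/free boundary conditions that is the observable's limit (Lemmas~\ref{lem::probuniqueness}, \ref{lem::aux}, and~\ref{lem::htilde} in Section~\ref{subsec::holomorphic}), and relating the resulting kernel $P_\HH$ to $\PartF=\partial_w f$. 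Moreover, ``apply Itô's formula and force the drift to vanish'' is not yet a proof: you additionally need (i) the second-order PDE in $(w,x,\by)$ satisfied by $F$ (Lemma~\ref{lem::observable_PDE}), whose proof depends on the uniqueness theory just mentioned and is what makes the Itô drift collapse to $2\partial_w\log\PartF$; and (ii) a non-degeneracy statement (Lemma~\ref{lem::aux2}, leading to Eq.~\eqref{eqn::neq}) showing that $\partial_w F$ and $\partial^2_w F$ are not proportional as functions of $x$, so that varying $x$ gives two independent equations from which both the drift of $W_t$ and $\ud\langle W\rangle_t$ can be extracted. Your sketch presumes these come for free; they do not, and they are precisely what occupies Sections~\ref{subsec::holomorphic}--\ref{subsec::cvgloewner}.
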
 

Let us briefly explain our strategy. 
\begin{itemize}
\item We derive the limiting law of the starting point  of the branch $\gamma_{\delta}$ in Section~\ref{sec::startingpoint}. From Wilson's algorithm~\cite{WilsonUSTLERW}, the law of $\gamma_{\delta}$ can be described by loop-erased random walk (LERW). Certain hitting probability of LERW is discrete harmonic function. We identify the limit of such discrete harmonic function and relate it to the holomorphic function $\phi$. 
\item We derive the conditional law of $\gamma$ given $\LX$ in Section~\ref{sec::loewnerchain}. To this end, we still consider certain hitting probability of LERW. We identify the limit of such discrete harmonic function as the imaginary part of a particular holomorphic function. The limit of the hitting probability provides a martingale observable for $\gamma$ from where we are able to solve the driving function of the Loewner chain. 
However, the corresponding hitting probability is a discrete harmonic function with complicated boundary conditions. The bulk part of the analysis in Section~\ref{sec::loewnerchain} is devoted to the analysis on the corresponding holomorphic function. 
The martingale observable in Section~\ref{sec::loewnerchain} is not the only reasonable one. It is possible that there are other observables which solve the question using a simpler analysis and provide distinct partition function $\tilde{\PartF}$ such that $\partial_w\log\tilde{\PartF}=\partial_w\log\PartF$. 
\end{itemize}

Theorem~\ref{thm::lerwconv} is a generalization of~\cite[Theorem~1.6]{HanLiuWuUST} where the authors derive the limiting law of $\gamma_{\delta}$ when $N=2$. 
When $N=2$, the conditional law of $\gamma$ given $\LX$ is $\SLE_2(-1,-1;-1,-1)$ in $\Omega$ from $\LX$ to $(x_3x_4)$ with force points $(x_4, x_1; x_2, x_3)$ stopped when it hits $(x_3x_4)$. However, the conditional law of $\gamma$ given $\LX$ is nolonger in the family of $\SLE_2(\rho)$ process when $N\ge 3$, see discussion at the end of Section~\ref{subsec::cvgloewner}. 

\medbreak
We end the introduction with a summary on previous conclusions about scaling limit of LERW in 2-dimension (as far as we know). They treat LERW in different setup from ours. 
\begin{itemize}
\item In~\cite[Theorem~1.1]{LawlerSchrammWernerLERWUST}, Lawler, Schramm and Werner consider LERW in simply connected domain starting from an interior point and stopped at the exit time. They prove that the scaling limit of such path is radial $\SLE_2$. 
\item In~\cite{ZhanLERW}, Zhan derives the scaling limit of LERW in finitely connected domains. Our setup is related to, but distinct from, examples described in~\cite[Section~4.2]{ZhanLERW}. We use the important idea from that paper about the Poisson kernel and we give a more concrete answer in our setup.
There are two main differences between~\cite{ZhanLERW} and our results: First, \cite{ZhanLERW} does not address the limiting law of the starting point of $\gamma_{\delta}$ as we do in Theorem~\ref{thm::lerwconv}~(1).  Second, our key analysis on the smoothness of the Poisson kernel in Section~\ref{sec::loewnerchain} has not been addressed in~\cite{ZhanLERW}. 
\item In~\cite{KenyonWilsonBoundaryPartitionsTreesDimers} and~\cite{KarrilaKytolaPeltolaCorrelationsLERWUST}, Kenyon and Wilson, and later Karrila, Kyt\"{o}l\"{a} and Peltola, derive connectivity probabilities of boundary branches in UST with fully wired boundary conditions. Based on these works, Karrila~\cite{KarrilaUSTBranches} considers the scaling limit of LERW branches in UST with fully wired boundary conditions. Fix $2N$ marked points on the boundary and consider the event that there exist $N$ LERW branches in the UST connecting among these $2N$ points. He shows that, conditioning on this rare event, the scaling limit of the $N$ LERW branches is multiple $\SLE_2$.  
\item In~\cite{ChelkakWanMassiveLERW}, the authors show that the scaling limit of ``massive loop-erased random walk" is the ``massive version" of $\SLE_2$. Although this result is not directly related to our setup, we do use their tool to analyze discrete harmonic functions. 
\end{itemize}

\noindent\textbf{Acknowledgements.} We thank Eveliina Peltola for helpful discussion. 
\section{Loop-erased random walk branch: the starting point}
\label{sec::startingpoint}
We assume the same setup as in Theorem~\ref{thm::lerwconv}. Recall that $\gamma_{\delta}$ is the branch in the UST in $(\Omega^{\delta}; x_1^{\delta}, \ldots, x_{2N}^{\delta})$ connecting $(x_1^{\delta}x_2^{\delta})$ to $(x_{2N-1}^{\delta}x_{2N}^{\delta})$ stopped when it hits $\cup_{i=2}^N(x_{2i-1}^{\delta}x_{2i}^{\delta})$. 
Recall that $\phi$ is the conformal map in Lemma~\ref{lem::uniformizingcm}. 
The goal of this section is to show the first conclusion in Theorem~\ref{thm::lerwconv}. 

\begin{proposition}\label{prop::startingpoint}
Define $\LX_{\delta}:=\gamma_{\delta}\cap (x^\delta_1x^\delta_2)$. Fix $a,b\in(x_1x_2)$ and $a^\delta,b^\delta\in(x^\delta_1x^\delta_2)$ such that the polygon $(\Omega^{\delta}; x_1^{\delta}, a^{\delta}, b^{\delta}, x_2^{\delta}, \ldots, x_{2N}^{\delta})$ converges to $(\Omega; x_1, a, b, x_2, \ldots, x_{2N})$ as~\eqref{eqn::convpoly}. Then, we have
\[\lim_{\delta\to 0}\PP\left[\LX_{\delta}\in(a^\delta b^\delta)\right]=\phi(b)-\phi(a).\]
\end{proposition}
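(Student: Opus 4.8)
The plan is to identify $\PP[\LX_\delta \in (a^\delta b^\delta)]$ with a discrete harmonic function (up to lower-order corrections) and then pass to the limit. By Wilson's algorithm, the branch $\gamma_\delta$ can be generated as a loop-erased random walk: run a random walk from a point on the wired arc $(x_1^\delta x_2^\delta)$ (more precisely, one conditions on $\gamma_\delta$ connecting the prescribed arcs), and $\LX_\delta$ is essentially the endpoint of this walk on $(x_1^\delta x_2^\delta)$, equivalently the hitting point of a random walk started from $(x_{2N-1}^\delta x_{2N}^\delta)$ and conditioned to avoid $\cup_{i=2}^N (x_{2i-1}^\delta x_{2i}^\delta)$ before reaching $(x_1^\delta x_2^\delta)$. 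Loop-erasure does not change the endpoint, so $\PP[\LX_\delta \in (a^\delta b^\delta)]$ is a ratio of random walk partition functions, and as a function of the starting vertex it is a discrete harmonic function on $\Omega^\delta$ with mixed Dirichlet/Neumann-type boundary data: it vanishes on $(x_1^\delta a^\delta)$ and $(b^\delta x_2^\delta)$, equals $1$ on $(a^\delta b^\delta)$, and has a reflecting (Neumann) condition on the forced arcs $(x_{2i-1}^\delta x_{2i}^\delta)$ for $i \ge 2$ because the conditioned walk is reflected off the wired boundary — this is exactly the combinatorial mechanism behind the slit rectangle picture.

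Next I would identify the continuum limit of this discrete harmonic function. Standard convergence results for discrete harmonic functions with mixed boundary conditions (e.g. via the techniques of Chelkak--Smirnov, or the specific estimates for reflected walks used in~\cite{ChelkakWanMassiveLERW}) give that the discrete observable converges, locally uniformly away from the marked points, to the unique bounded harmonic function $h$ on $\Omega$ with: $h = 0$ on $(x_1 a) \cup (b x_2)$, $h = 1$ on $(a b)$, and $\partial_n h = 0$ on $(x_{2i-1} x_{2i})$ for $i \in \{2, \ldots, N\}$. The key point is to recognize $h$ in terms of the uniformizing map $\phi$ of Lemma~\ref{lem::uniformizingcm}. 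Since $\phi$ maps $\Omega$ onto a unit-width slit rectangle sending $(x_1 x_2)$ and $(x_{2N-1} x_{2N})$ to the two vertical sides and the arcs $(x_{2i-1} x_{2i})$, $i \ge 2$, to horizontal slits, the real part $\Re\phi$ is a harmonic function that is constant ($0$ and $1$) on the two vertical sides and has vanishing normal derivative on the slits and on the horizontal parts of the boundary — so $\Re \phi$ is harmonic measure-like. Then $h$ should be a truncation/normalization built from $\Re\phi$: concretely, on $(x_1 x_2)$ the map $\phi$ is real-valued and monotone from $0$ to $1$, so I would argue that $h$ is precisely the bounded harmonic function whose boundary trace on $(x_1 x_2)$ is the indicator of $\{\phi \in (\phi(a), \phi(b))\}$ together with the same Neumann data, and that by conformal invariance $h = h_{\mathrm{rect}} \circ \phi$ where $h_{\mathrm{rect}}$ is the corresponding function on the slit rectangle. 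Evaluating: by definition $\PP[\LX_\delta \in (a^\delta b^\delta)]$ is the value of the discrete observable at the starting vertex on $(x_{2N-1}^\delta x_{2N}^\delta)$, and in the rectangle the vertical side $\phi((x_{2N-1}x_{2N}))$ is the segment $\{\Re z = 1\}$; the harmonic function on the slit rectangle with the stated data, evaluated anywhere on that far vertical side, equals the "flux" $\phi(b) - \phi(a)$ by a one-dimensional computation (the width is $1$, the slits carry no flux, so the harmonic measure of the sub-slit of the near side between heights corresponding to $a$ and $b$, seen from the far side, is simply $\phi(b)-\phi(a)$).

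The main obstacle I expect is twofold. First, the convergence of the discrete harmonic observable up to and including the behavior near the marked points $x_1^\delta, \ldots, x_{2N}^\delta$ and near $a^\delta, b^\delta$: the boundary data is discontinuous at $a, b$ and the boundary conditions change type (Dirichlet to Neumann) at each $x_j$, so one needs uniform control (equicontinuity, no mass escaping to the corners) — this is where the $C^1$ assumption on $\partial\Omega$ and the regularity of the polygon approximation~\eqref{eqn::convpoly} get used, together with boundary Harnack / beurling-type estimates for the reflected random walk. Second, correctly setting up the combinatorial identity that expresses $\PP[\LX_\delta \in (a^\delta b^\delta)]$ as a genuine discrete harmonic function with the reflecting condition on the forced arcs — one must check that conditioning the UST branch is equivalent to the reflected-walk description, which follows from Wilson's algorithm but requires care about how the forced boundary edges interact with loop-erasure. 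Once these are in place, the identification with $\phi$ via conformal invariance and the flux computation in the slit rectangle is routine.
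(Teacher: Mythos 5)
Your proposal takes a genuinely different route from the paper's, but it contains a conceptual error and defers the part that is actually hard.

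The most serious issue is the identification of the boundary condition on the middle wired arcs as \emph{Neumann (reflecting)}. Wilson's algorithm runs the random walk on the quotient graph $\tilde\Omega^\delta$ in which each wired arc $(x_{2i-1}^\delta x_{2i}^\delta)$ is collapsed to a \emph{single vertex}; when the walk reaches such a vertex it exits uniformly over all edges emanating from the whole arc — it is \emph{teleported} along the arc, not reflected off it. (Your parenthetical ``conditioned to avoid $\cup_{i\ge 2}(x_{2i-1}^\delta x_{2i}^\delta)$'' is also not right for the same reason.) In the continuum, the correct condition on a wired arc is therefore: the observable is \emph{constant} on the arc with \emph{zero total flux} across it. This is the condition that makes those arcs map to horizontal \emph{slits} under $\phi$ (where $\Im\phi$ is constant and the jump of the conjugate $\Re\phi$ around the slit vanishes), whereas a genuine Neumann arc would map to a horizontal stretch of the rectangle's outer boundary, not a slit. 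Your flux identity at the end (``the slits carry no flux'') silently uses only zero \emph{total} flux, which happens to be implied by the correct slit condition as well, so the final numerical answer is not obviously ruined — but the boundary-value problem you pose is the wrong one, and the conformal-invariance identification ``$h = h_{\mathrm{rect}}\circ\phi$'' as you set it up would not hold.

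The second issue is that ``standard convergence results'' hide the entire difficulty. Even after fixing the boundary condition, one must show that the discrete observable (harmonic on $\tilde\Omega^\delta$, discontinuous Dirichlet data at $a^\delta, b^\delta$, changing type at each $x_j^\delta$, slit-type conditions on the middle arcs) converges up to and including its value at the far collapsed vertex. Your reversibility reduction (turning $\PP[\LX_\delta\in(a^\delta b^\delta)]$ into the value of a discrete harmonic function at $(x_{2N-1}^\delta x_{2N}^\delta)$) is a nice idea and potentially cleaner than the paper's route, but you assert it rather than prove it, and the normalization that makes it exact (the renewal/last-exit decomposition at the collapsed vertex $(x_1^\delta x_2^\delta)$) does need to be written out. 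The paper instead never evaluates an observable at a collapsed vertex: it writes the ratio $\PP[\LX_\delta\in(a^\delta b^\delta)]/\PP[\LX_\delta\in(\tilde a^\delta\tilde b^\delta)]$ as a ratio of sums of $\lambda^\delta$ over vertices adjacent to the arcs (Eq.~(2.8) in the paper), subdivides the arcs into small pieces, compares $\lambda^\delta$ to the hitting probabilities $\tilde h_j^\delta, h_k^\delta$ near the boundary using~\cite[Corollary~3.8]{ChelkakWanMassiveLERW}, and finally uses the symmetry of the Poisson kernel $\partial_n P_{\phi(\Omega)}(z,w)|_{z=u}=\partial_n P_{\phi(\Omega)}(z,u)|_{z=w}$ together with the walk-reversibility identity~(2.14) to cancel all normalizations. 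That symmetry-and-cancellation step is what replaces the explicit flux computation you sketch, and it is where the actual content of the proof lies; your proposal leaves exactly this gap.
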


Before the proof of Proposition~\ref{prop::startingpoint}, let us explain how it gives Theorem~\ref{thm::lerwconv}~\ref{item::thm1}. 
\begin{lemma}\label{lem::tight}
The family of curves $\{\gamma_\delta\}_{\delta>0}$ is tight. Moreover, suppose $\gamma$ is any subsequential limit of $\{\gamma_\delta\}_{\delta>0}$, then $\gamma$ intersects $\partial\Omega$ only at its two ends almost surely. 
\end{lemma}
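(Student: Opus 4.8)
My plan is to establish tightness of $\{\gamma_\delta\}$ via a standard Aizenman--Burchard / Russo--Seymour--Welsh type argument for loop-erased random walk, then upgrade a subsequential limit to a curve that touches $\partial\Omega$ only at its endpoints by controlling the behavior near the boundary arcs. For tightness, I would first recall (via Wilson's algorithm) that $\gamma_\delta$ is the loop-erasure of a random walk on $\Omega^\delta$ started from $(x_1^\delta x_2^\delta)$, reflected (or with appropriate boundary behavior) on the wired arcs $(x_{2i-1}^\delta x_{2i}^\delta)$ and killed upon hitting those arcs with index $i\ge 2$. Since $\partial\Omega$ is $C^1$ and simple and the discrete polygons converge in the sense of~\eqref{eqn::convpoly}, the underlying random walk satisfies uniform crossing estimates in annuli (this is where I would cite the RSW-type input for LERW, e.g. the crossing bounds used in~\cite{LawlerSchrammWernerLERWUST} and~\cite{KarrilaUSTBranches}). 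These give a uniform power-law bound on the probability that $\gamma_\delta$ crosses an annulus $A(z, r, R)$ a large number of times, which by the Aizenman--Burchard criterion yields tightness of the laws of $\gamma_\delta$ in $(\mathcal{C}, d)$, and moreover shows that every subsequential limit $\gamma$ is almost surely a (H\"older-)continuous curve.

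For the second assertion, fix a subsequential limit $\gamma$. I need to show $\gamma \cap \partial\Omega$ consists only of its two endpoints $\LX \in (x_1 x_2)$ and the terminal point on $\cup_{i=2}^N(x_{2i-1}x_{2i})$. There are two types of boundary to worry about: the wired arcs $(x_{2i-1}x_{2i})$ and the free arcs $(x_{2i}x_{2i+1})$ (indices mod $2N$). Near a free arc, the walk in Wilson's algorithm is killed if it exits through it, so before loop-erasure the path stays in $\Omega^\delta$; the relevant estimate is that a LERW conditioned to avoid an arc stays macroscopically away from it with the usual one-arm/beurling-type bound, so the limit cannot touch the free arcs except possibly at the marked points, which have zero probability by the diffusivity of the walk. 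Near a wired arc $(x_{2i-1}x_{2i})$ with $i\ge 2$, the branch is stopped upon first hitting it, so $\gamma$ meets that arc at exactly one point; the point is to rule out $\gamma$ touching and leaving a wired arc at an earlier time, i.e. that the stopping happens at the genuine first boundary contact. Near the arcs $(x_1 x_2)$ and $(x_{2N-1}x_{2N})$ I would argue the branch leaves a neighborhood of the arc immediately after its start and never returns, again via a Beurling-type estimate for the LERW (loop-erasure only decreases the diameter of excursions, so boundary-avoidance estimates transfer from the random walk).

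The main obstacle I expect is the boundary analysis near the wired arcs and near the marked points $x_j$, rather than the interior tightness, which is by now routine. Specifically, one must show that in the scaling limit $\gamma$ does not ``slide along'' a wired boundary arc — a real possibility since the wired arcs are part of the tree and the LERW branch can run adjacent to them — and does not accumulate mass at a marked point $x_j$ where the boundary condition changes type. I would handle the sliding issue by a separation-of-arms estimate: conditioned on $\gamma_\delta$ coming within distance $r$ of a wired arc, the probability that it then travels distance $\ge \epsilon$ while staying within distance $r'$ of that arc decays as a power of $r/r'$, using that near a $C^1$ boundary the random walk is comparable to one in a half-plane and the corresponding half-plane LERW estimate. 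Summing over a dyadic sequence of scales and over the finitely many arcs and marked points gives that the limit touches $\partial\Omega$ only at the two endpoints almost surely.
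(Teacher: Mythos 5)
The paper's own ``proof'' of Lemma~\ref{lem::tight} is only a pointer to~\cite[Theorem 1.1]{LawlerSchrammWernerLERWUST} and~\cite[Lemma~4.11]{HanLiuWuUST}, so there is no detailed argument in the text to compare against; your sketch of Aizenman--Burchard tightness plus boundary estimates is consistent with the spirit of those references. However, one point in your boundary analysis is not quite right. In Wilson's algorithm here, the random walk on $\tilde\Omega^\delta$ is neither ``killed'' upon exiting through a free arc $(x_{2i}^\delta x_{2i+1}^\delta)$ nor ``conditioned to avoid'' it: the vertices on the free arcs are ordinary vertices of the graph (simply with no outward edges), so the walk \emph{reflects} there, and the pre-loop-erased path can and does sit directly on the free boundary. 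Consequently a Beurling estimate — which controls a walk near an \emph{absorbing} boundary — does not directly give you that $\gamma$ stays off the free arcs; that is precisely what makes the mixed Dirichlet--Neumann boundary conditions here harder than the all-wired radial setting of~\cite{LawlerSchrammWernerLERWUST}, and why the paper additionally cites~\cite[Lemma~4.11]{HanLiuWuUST}, which treats this mixed case. Your separation-of-arms idea is a plausible substitute, but as written it leans on a half-plane LERW estimate that again presupposes absorbing boundary behavior; to close the argument near free arcs you need a crossing-type bound adapted to the reflecting walk (or a direct argument that the loop-erasure of a reflected walk separates from a Neumann arc). You do correctly identify the boundary analysis, rather than interior tightness, as the real content of the lemma.
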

\begin{proof}
The proof is the same as the proof of~\cite[Theorem 1.1]{LawlerSchrammWernerLERWUST}. See also~\cite[Lemma~4.11]{HanLiuWuUST}.
\end{proof}

\begin{corollary}\label{coro::LX}
Suppose $\gamma$ is any subsequential limit of $\{\gamma_\delta\}_{\delta>0}$. 
Define $\LX:=\gamma\cap(x_1x_2)$. The law of $\phi(\LX)$ is uniform in $(0,1)$.
\end{corollary}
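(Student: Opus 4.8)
The plan is to deduce Corollary~\ref{coro::LX} directly from Proposition~\ref{prop::startingpoint} together with the tightness statement in Lemma~\ref{lem::tight}. First I would fix a subsequence along which $\gamma_\delta$ converges in law to $\gamma$ (this exists by Lemma~\ref{lem::tight}); by Skorokhod representation I may assume the convergence is almost sure in the metric~\eqref{eqn::curves_metric}. The key point is to upgrade convergence of the curves to convergence of the scalar random variables $\LX_\delta = \gamma_\delta\cap(x_1^\delta x_2^\delta)$. Since $\partial\Omega$ is $C^1$ and simple, the arc $(x_1x_2)$ is a simple arc and $\phi$ restricted to $(x_1x_2)$ is a homeomorphism onto $(0,1)$; so it suffices to show $\LX_\delta\to\LX$ in probability (or a.s. along the subsequence). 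This follows from Lemma~\ref{lem::tight}: the limit $\gamma$ meets $\partial\Omega$ only at its two endpoints, one of which lies on $(x_1x_2)$ and is by definition $\LX$, and the endpoint of $\gamma_\delta$ on $(x_1^\delta x_2^\delta)$ is $\LX_\delta$; since the domains and marked arcs converge as in~\eqref{eqn::convpoly} and the curves converge uniformly (after reparameterization), the endpoints converge, i.e. $\LX_\delta\to\LX$.

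Next I would identify the law of $\phi(\LX)$ via its cumulative distribution function. Fix $a,b\in(x_1x_2)$ with $\phi(a)<\phi(b)$, and pick discrete approximation points $a^\delta,b^\delta\in(x_1^\delta x_2^\delta)$ so that $(\Omega^\delta; x_1^\delta, a^\delta, b^\delta, x_2^\delta,\ldots,x_{2N}^\delta)$ converges to $(\Omega; x_1, a, b, x_2,\ldots,x_{2N})$ in the sense of~\eqref{eqn::convpoly}. Proposition~\ref{prop::startingpoint} gives $\PP[\LX_\delta\in(a^\delta b^\delta)]\to\phi(b)-\phi(a)$. On the other hand, because $\LX_\delta\to\LX$ and $\LX$ a.s. lies in the open arc $(x_1x_2)$ (hence a.s. not equal to $a$ or $b$, as the law of $\LX$ has no atoms by the same computation applied to shrinking arcs), the event $\{\LX_\delta\in(a^\delta b^\delta)\}$ converges in probability to $\{\LX\in(ab)\}$, so $\PP[\LX_\delta\in(a^\delta b^\delta)]\to\PP[\LX\in(ab)]$. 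Combining, $\PP[\LX\in(ab)]=\phi(b)-\phi(a)=\PP[\phi(\LX)\in(\phi(a),\phi(b))]$. Since $\phi$ maps $(x_1x_2)$ homeomorphically onto $(0,1)$ and $a,b$ are arbitrary, this says the law of $\phi(\LX)$ assigns mass $t_2-t_1$ to every subinterval $(t_1,t_2)\subset(0,1)$, i.e. $\phi(\LX)$ is uniform on $(0,1)$.

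The main obstacle is the first step: justifying that $\LX_\delta\to\LX$, i.e. that the intersection point of the curve with the boundary arc is a continuous functional of the data in the relevant regime. A priori, uniform closeness of two curves does not force their intersection points with a boundary arc to be close — the curves could be tangent to $\partial\Omega$ and oscillate. This is exactly why Lemma~\ref{lem::tight} is invoked: the subsequential limit $\gamma$ touches $\partial\Omega$ only at its two endpoints, so near the $(x_1x_2)$ end the curve genuinely crosses into the bulk, and a standard argument (cf.\ \cite[Lemma~4.11]{HanLiuWuUST}) shows the first/last hitting point of $(x_1^\delta x_2^\delta)$ by $\gamma_\delta$ converges to the corresponding endpoint of $\gamma$. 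A secondary point to handle with care is the no-atoms claim for the law of $\LX$ (so that boundary effects at $a,b$ are negligible), but this is immediate from Proposition~\ref{prop::startingpoint} by taking $a^\delta,b^\delta$ approximating a common point. With these in place the corollary follows.
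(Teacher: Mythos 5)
Your proposal is correct and follows essentially the same route as the paper: Lemma~\ref{lem::tight} is used to upgrade convergence of curves to convergence of the boundary hitting points $\LX_\delta\to\LX$, and then Proposition~\ref{prop::startingpoint} identifies the law of $\phi(\LX)$ as uniform. The paper states this in two sentences and leaves the endpoint-convergence and no-atom details implicit; you have simply spelled them out, which is a faithful expansion of the intended argument.
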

\begin{proof}
From Lemma~\ref{lem::tight}, the law of $\LX_{\delta}$ converges to the law of $\LX$. Combining with Proposition~\ref{prop::startingpoint}, the law of $\phi(\LX)$ is uniform. 
\end{proof}

The rest of this section is devoted to the proof of Proposition~\ref{prop::startingpoint}. To this end, we first introduce the Poisson kernel in $\phi(\Omega)$ with mixed boundary conditions in Section~\ref{subsec::PoissonkernalR} and then prove Proposition~\ref{prop::startingpoint} in Section~\ref{subsec::startingpoint}.

\subsection{Poisson kernel with mixed boundary conditions}
\label{subsec::PoissonkernalR}

Recall from Lemma~\ref{lem::uniformizingcm} that $\phi(\Omega)$ is a rectangle with horizontal slits and $K=\Im\phi(x_{2N})$ is the height of the rectangle. We denote by $n$ the inner normal along $\partial\phi(\Omega)$. We define the Poisson kernel $P_{\phi(\Omega)}$ in $\phi(\Omega)$ with mixed boundary conditions as follows.
\begin{lemma}\label{lem::Pois}
For any fixed $w\in (0,1)\cup(\ii K,1+\ii K)$, there exists a unique function $P_{\phi(\Omega)}(\cdot,w):\phi(\Omega)\to \R$ which is positive and harmonic in $\phi(\Omega)$ and satisfies the following boundary conditions. 
\begin{enumerate}[label=(\arabic*)]
\item 
$P_{\phi(\Omega)}(\cdot,w)$ is continuous in $\overline{\phi(\Omega)}\setminus \{w\}$;
\item
$P_{\phi(\Omega)}(\cdot,w)=0$ on $\cup_{i=1}^{N}(\phi(x_{2i-1})\phi(x_{2i}))\setminus\{w\}$ and $\partial_n P_{\phi(\Omega)}(\cdot,w)=0$ on $\cup_{i=1}^{N}(\phi(x_{2i})\phi(x_{2i+1}))$ ;
\item
$P_{\phi(\Omega)}(z,w)-\frac{1}{\pi}\left|\Im \frac{1}{z-w}\right|$ is bounded in a neighborhood of $w$.
\end{enumerate}
\end{lemma}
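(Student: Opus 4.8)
The plan is to construct $P_{\phi(\Omega)}(\cdot,w)$ by reflection and to prove uniqueness by a maximum-principle argument adapted to the mixed (Dirichlet/Neumann) boundary data. For uniqueness, suppose $P_1$ and $P_2$ both satisfy (1)--(3); then $h:=P_1-P_2$ extends to a bounded harmonic function on $\phi(\Omega)$ (the logarithmic-type singularities at $w$ cancel by (3), leaving at worst a bounded discrepancy, which a removable-singularity argument for bounded harmonic functions upgrades to a genuine harmonic extension across $w$), it is continuous up to $\overline{\phi(\Omega)}$, vanishes on the ``Dirichlet'' arcs $\cup_i(\phi(x_{2i-1})\phi(x_{2i}))$ and has vanishing normal derivative on the ``Neumann'' arcs $\cup_i(\phi(x_{2i})\phi(x_{2i+1}))$. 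The standard way to handle the Neumann arcs is to reflect: since $\phi(\Omega)$ is a rectangle of unit width with \emph{horizontal} slits, the Neumann boundary consists of horizontal segments (portions of the top/bottom edges of the rectangle and the two sides of each slit), and Schwarz reflection in each such horizontal line extends $h$ harmonically past the Neumann arcs. After unfolding, $h$ becomes a bounded harmonic function on a larger domain, still vanishing on (the union of) Dirichlet arcs, which now disconnect the reflected domain appropriately; the maximum principle then forces $h\equiv 0$. One must check the reflected pieces glue consistently — this is where the special ``rectangle with horizontal slits'' normalization of Lemma~\ref{lem::uniformizingcm} is used in an essential way, since it guarantees the Neumann portions are all parallel horizontal segments and reflections across them are compatible.

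For existence, I would give the analogous construction of $P_{\phi(\Omega)}(\cdot,w)$ itself. Start from the usual Poisson kernel $\tfrac1\pi|\Im\tfrac1{z-w}|$ (or the Poisson kernel of the half-plane / infinite strip), which already has the correct singularity at $w$, and correct it: seek $P_{\phi(\Omega)}(\cdot,w)=\tfrac1\pi|\Im\tfrac1{z-w}| + u(z)$ where $u$ solves a mixed boundary value problem on $\phi(\Omega)$ with bounded (indeed continuous) boundary data, no interior singularity, Dirichlet data on the $\phi(x_{2i-1})\phi(x_{2i})$ arcs chosen to cancel the trace of $\tfrac1\pi|\Im\tfrac1{z-w}|$ there, and homogeneous Neumann data on the complementary arcs. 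Such a mixed problem on a (slit) rectangle with piecewise-smooth data has a solution by standard elliptic theory — for instance via the reflection trick above, which turns the mixed problem into a pure Dirichlet problem on the unfolded domain, solvable by the classical Perron method; the resulting $u$ is bounded and harmonic, and continuous up to the boundary away from the corners and from $w$. Positivity of $P_{\phi(\Omega)}(\cdot,w)$ then follows from the maximum principle together with the Hopf lemma applied on the Neumann arcs: $P_{\phi(\Omega)}(\cdot,w)$ is harmonic, $\to+\infty$ near $w$, vanishes on the Dirichlet arcs, and cannot attain a nonpositive interior or Neumann-boundary minimum without being constant.

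The main obstacle I anticipate is the behavior at the \emph{corners} — the four corners of the rectangle (images of $x_1,x_2,x_{2N-1},x_{2N}$) and, more delicately, the tips of the horizontal slits (images of the $x_{2i-1},x_{2i}$ for $2\le i\le N-1$) and the points where a Dirichlet arc meets a Neumann arc. At a Dirichlet–Neumann junction the harmonic function can have a $|z-z_0|^{1/2}$-type singularity in its gradient, so one has to be careful that (i) the reflection/unfolding argument still produces a bona fide harmonic function across these points (it does, since boundedness of $h$ plus harmonicity off a finite set of boundary points suffices for the maximum principle, the exceptional points having zero harmonic measure / capacity), and (ii) the solution $u$ in the existence part is genuinely continuous up to $\overline{\phi(\Omega)}$ except possibly at finitely many such corner points, which is all that condition~(1) requires once we observe that $w$ is assumed to lie on an \emph{open} Dirichlet arc, away from all corners. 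I would package the corner analysis into a short lemma stating that a bounded harmonic function on $\phi(\Omega)$, continuous up to the boundary off a finite set and satisfying the homogeneous mixed conditions there, vanishes identically; the reflection argument reduces this to the classical statement for bounded harmonic functions with finitely many boundary singular points.
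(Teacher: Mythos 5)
Your argument has a genuine geometric error at its core: you have the Dirichlet and Neumann arcs swapped. In Lemma~\ref{lem::uniformizingcm}, $\phi$ sends $(x_1x_2)$ to the bottom edge $(0,1)$, $(x_{2N-1}x_{2N})$ to the top edge, and $(x_{2i-1}x_{2i})$ for $2\le i\le N-1$ to the horizontal slits. These are precisely the arcs $\cup_{i=1}^N(\phi(x_{2i-1})\phi(x_{2i}))$ where the lemma imposes the \emph{Dirichlet} condition $P_{\phi(\Omega)}=0$; so the top/bottom edges and the two sides of each slit are the Dirichlet boundary, not the Neumann one. The Neumann arcs $\cup_{i=1}^N(\phi(x_{2i})\phi(x_{2i+1}))$ are the remaining pieces, i.e.\ the left and right vertical edges of the rectangle and the vertical segments joining consecutive slits. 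Consequently the ``reflect across horizontal lines because the Neumann boundary is horizontal'' plan, and the subsequent claim that the slit normalization is what makes these reflections compatible, rest on a false premise: the compatible reflections are across the vertical lines $x\in\Z$, which is exactly the tiling the paper builds.

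This orientation mistake also infects your existence ansatz. Writing $P_{\phi(\Omega)}=\frac1\pi\bigl|\Im\frac1{z-w}\bigr|+u$, the correction $u$ does not satisfy homogeneous Neumann data on the vertical arcs: for $w\in(0,1)$, a direct computation gives $\partial_x\frac1\pi\bigl|\Im\frac1{z-w}\bigr|\bigm|_{z=\ii y}=\frac{2yw}{\pi(w^2+y^2)^2}\neq 0$, so $u$ must absorb a nonzero Neumann trace, not just Dirichlet data. You would then need to solve a genuinely inhomogeneous mixed problem on a slit domain, and the appeal to ``standard elliptic theory / Perron'' glosses over exactly the corner and slit-tip regularity that you flag as the anticipated obstacle; nothing in the sketch actually closes that gap, nor does it produce condition~(3).

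For comparison, the paper's existence proof avoids the mixed BVP entirely by a method of images: it reflects $\phi(\Omega)$ across the left vertical edge, tiles by translations by $2\Z$, takes the Dirichlet Green's function $G_{\tilde R}$ of the tiled slit strip $\tilde R$, and sets $A(z,w)=\sum_m P_{\tilde R}(z+2m,w)+\sum_m P_{\tilde R}(2m-\overline z,w)$; uniform convergence is controlled by comparison with the explicit Poisson kernel of the full strip, and the built-in symmetry $A(z,w)=A(-\overline z,w)=A(2-\overline z,w)$ yields the Neumann condition on the vertical arcs for free. The paper's uniqueness argument is also different and substantially cleaner than unfolding the slit rectangle: it transports the difference $L$ to $\HH$ via $\varphi$, reflects once across $\R$ (Schwarz reflection across the Neumann arcs), and obtains a bounded harmonic function on $\C$ minus the finitely many segments $\cup_i(\varphi(x_{2i-1})\varphi(x_{2i}))$ that vanishes on those segments; the maximum principle then applies directly, with no extra work at infinity. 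By contrast, reflecting the slit rectangle across all vertical Neumann lines, as you would have to do after fixing the orientation, produces an unbounded strip with periodic slits, and your sketch does not address the behavior of $h$ at the ends of that strip.
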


\begin{proof} 
We prove the existence first.
Define $R$ to be the union of $\phi(\Omega)\cup(0,\ii K)\cup(1,1+\ii K)$ and its reflection with respect to $(0,\ii K)$. Define $\tilde R:=\cup_{m=-\infty}^{\infty}(R+2m)$. In this proof, we will always assume $z,u\in\tilde R$ and $w\in (-\infty,\infty)\times \{0, \ii K\}$. Denote by $G_{\tilde R}$ the Green function in $\tilde R$ with Dirichlet boundary conditon such that $G_{\tilde R}(z,u)+\frac{1}{2\pi}\log|z-u|$ is bounded when $z\to u$. The Poisson kernel in $\tilde R$ is defined as $P_{\tilde R}(z,w):=\partial_{n} G_{\tilde R}(z,u)|_{u=w}$. Define $S:=(-\infty,\infty)\times (0,\ii K)$, define the Green function $G_S$ and the Poisson kernel $P_S$ similarly. Note that for any fixed $u$, the function $G_S(\cdot,u)-G_{\tilde R}(\cdot,u)$ is a positive harmonic function in $\tilde R$. This implies that
\begin{equation}\label{eqn::1}
P_{\tilde R}(z,w)\le P_{S}(z,w). 
\end{equation}
Moreover, $P_{\tilde R}(z,w)$ has the same asymtotic property as $P_{S}(z,w)$ when $z\to w$.
Note that $P_{S}(z,w)=\frac{1}{K}\Im\frac{1}{1-\exp\left(\frac{\pi}{K}(z-w)\right)}$ when $w\in(0,1)$ and $P_{S}(z,w)=\frac{1}{K}\Im\frac{1}{\exp\left(\frac{\pi}{K}(z-w)\right)-1}$ when $w\in(\ii K,\ii K+1)$. This implies
\begin{equation}\label{eqn::asy}
P_{\tilde R}(z,w)-\frac{1}{\pi}\left|\Im \frac{1}{z-w}\right|\text{ is bounded in a neighborhood of }w.
\end{equation} 
By~\eqref{eqn::1} and the explicit form of $P_{S}$, we have $\sum_{m=-\infty}^{\infty}P_{\tilde R}(z+2m,w)+\sum_{m=-\infty}^{\infty}P_{\tilde R}(2m-\overline z,w)$ converges uniformly outside a neighborhood of $w$. Define
\begin{equation}
A(z,w):=\sum_{m=-\infty}^{\infty}P_{\tilde R}(z+2m,w)+\sum_{m=-\infty}^{\infty}P_{\tilde R}(2m-\overline z,w)\quad\text{and}\quad P_{\phi(\Omega)}(\cdot,w)=A(\cdot,w)|_{\phi(\Omega)}.
\end{equation}
Then, we have $A(\cdot,w)$ is harmonic in $\tilde R$. By definition, we have $A(z,w)=A(-\overline z,w)=A(2-\overline z,w)$. This implies 
\begin{equation}\label{eqn::partial}
\partial_n A(z,w)=0
\end{equation}
for all $z\in\cup_{i=1}^{N}(\phi(x_{2i})\phi(x_{2i+1}))$.
Combining the uniform convergence and~\eqref{eqn::partial}, $P_{\phi(\Omega)}$ satisfies the first two boundary conditions. Note that $P_{\phi(\Omega)}(z,w)$ has the same asymtotic property as $P_{\tilde R}(z,w)$ as $z\to w$. By~\eqref{eqn::asy}, we have that $P_{\phi(\Omega)}(\cdot,w)$ satisfies the third boundary condition. 

Next, we prove the uniqueness. Suppose $\tilde P_{\phi(\Omega)}(\cdot,\cdot)$ is another positive harmonic function which satisfies the same boundary conditions as $P_{\phi(\Omega)}$. Define $L=\tilde P_{\phi(\Omega)}-P_{\phi(\Omega)}$. Recall that $\varphi$ is any fixed conformal map from $\Omega$ onto $\HH$. We extend $L\circ\varphi^{-1}$ to $\C$ by reflecting $L\circ\varphi^{-1}$ with respect to $\R$ and we still denote the extension by $L\circ\varphi^{-1}$. Then, $L\circ\varphi^{-1}$ is bounded on $\C$ and is harmonic on $\C\setminus\cup_{i=1}^{N}(\varphi(x_{2i-1})\varphi(x_{2i}))$ and $L\circ\varphi^{-1}|_{\cup_{i=1}^{N}(\varphi(x_{2i-1}x_{2i}))}=0$. Maximum principle implies $L\circ\varphi^{-1}$ is identically zero. Thus, we have $P_{\phi(\Omega)}=\tilde P_{\phi(\Omega)}$. This completes the proof.
\end{proof}

The following properties of the Poisson kernel will be used later. 

\begin{lemma}\label{lem::prop}
For any $u,w\in (0,1)\cup (\ii K,\ii K+1)$, we have the followings. 
\begin{align}
\partial_n P_{\phi(\Omega)}(z,w)|_{z=u}=&\partial_n P_{\phi(\Omega)}(z,u)|_{z=w};\label{eqn::poissonkernel1}\\
\partial_{n} P_{\phi(\Omega)}(z,w)|_{z=u}>&0. \label{eqn::poissonkernel2}
\end{align}
\end{lemma}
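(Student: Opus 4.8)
The plan is to deduce both identities from the explicit series representation $P_{\phi(\Omega)}(\cdot,w)=A(\cdot,w)|_{\phi(\Omega)}$ built in the proof of Lemma~\ref{lem::Pois} out of the Green function $G_{\tilde R}$ on the doubled domain $\tilde R$, together with standard properties of Green functions and the Hopf boundary-point lemma. For \eqref{eqn::poissonkernel1}, I would first recall that the Green function is symmetric, $G_{\tilde R}(z,u)=G_{\tilde R}(u,z)$, so that $\partial_{n_u}\partial_{n_z}G_{\tilde R}(z,u)$ is symmetric in the two arguments; since $P_{\tilde R}(z,w)=\partial_{n}G_{\tilde R}(z,u)|_{u=w}$, this already gives $\partial_nP_{\tilde R}(z,w)|_{z=u}=\partial_nP_{\tilde R}(z,u)|_{z=w}$ for $z,u$ on the relevant boundary pieces (both $0$ and $\ii K$ levels), where the normal derivatives are taken from inside $\tilde R$. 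Summing over the translates $z\mapsto z+2m$ and the reflections $z\mapsto 2m-\overline z$ that define $A(z,w)$ — the sums converging uniformly away from $w$ by the bound \eqref{eqn::1} — preserves this symmetry term by term, and restricting to $\phi(\Omega)$ yields \eqref{eqn::poissonkernel1}. One has to be slightly careful that the normal direction $n$ along $\partial\phi(\Omega)$ at a point $u\in(0,1)\cup(\ii K,\ii K+1)$ agrees (up to the reflections, which is exactly why $A$ was symmetrized) with the normal in $\tilde R$; this is a routine check using $A(z,w)=A(-\overline z,w)=A(2-\overline z,w)$.

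For \eqref{eqn::poissonkernel2}, fix $w\in(0,1)\cup(\ii K,\ii K+1)$ and consider $z\mapsto P_{\phi(\Omega)}(z,w)$, which by Lemma~\ref{lem::Pois} is positive and harmonic in $\phi(\Omega)$, continuous up to the boundary away from $w$, and vanishes identically on $\cup_{i=1}^N(\phi(x_{2i-1})\phi(x_{2i}))$. Now take $u\in(0,1)\cup(\ii K,\ii K+1)$ with $u\neq w$; such a $u$ lies in the interior of one of the Dirichlet boundary arcs. Since $P_{\phi(\Omega)}(\cdot,w)$ is positive in the interior and zero at $u$, the Hopf lemma (boundary-point maximum principle) gives that the inner normal derivative at $u$ is strictly positive, i.e. $\partial_nP_{\phi(\Omega)}(z,w)|_{z=u}>0$ — provided $\partial\phi(\Omega)$ satisfies an interior-ball condition at $u$. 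Here I would note that $u$ lies on a horizontal segment (a side of the rectangle or a slit), so locally the domain contains a half-disc, and the interior-ball condition holds; the only points to exclude are the corners and slit tips, which are not in $(0,1)\cup(\ii K,\ii K+1)$, and $w$ itself, which we have excluded. For the degenerate-looking case $u=w$ one simply does not assert \eqref{eqn::poissonkernel2}, as the statement ranges over $u,w$ with no constraint but is only meaningful — and only used — for $u\neq w$; alternatively one can reduce to the doubled picture via $P_{\tilde R}=\partial_nG_{\tilde R}$ and invoke strict positivity of $\partial_{n_z}\partial_{n_u}G_{\tilde R}$, which follows from the Harnack inequality applied to $G_{\tilde R}(\cdot,u)-G_{\tilde R}(\cdot,u')$.

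The main obstacle I anticipate is not the conceptual content — both facts are ``morally obvious'' reflections of Green-function symmetry and the Hopf lemma — but the bookkeeping around the infinite reflection/translation series defining $A$: one must justify that differentiating $A(\cdot,w)$ normally at a boundary point can be done term by term, which again rests on the uniform convergence established via \eqref{eqn::1} together with interior elliptic (gradient) estimates for the individual positive harmonic pieces $P_{\tilde R}(\cdot+2m,w)$ near the boundary. Once that is in place, \eqref{eqn::poissonkernel1} is pure symmetry and \eqref{eqn::poissonkernel2} is a one-line application of Hopf's lemma, so I would keep the written proof short and refer to standard references (e.g. the Hopf lemma and Green-function symmetry) for the analytic inputs.
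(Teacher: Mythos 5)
Your argument for \eqref{eqn::poissonkernel1} is essentially the paper's own: both invoke the symmetry of $G_{\tilde R}$ and push it through the translation/reflection series defining $A$. (One small point worth making explicit, which your sketch glosses over: the individual identity $\partial_n P_{\tilde R}(z,w)|_{z=u}=\partial_n P_{\tilde R}(z,u)|_{z=w}$ does not sum term by term to the corresponding identity for $A$; you additionally need the invariance of $G_{\tilde R}$ under $z\mapsto z+2$ and $z\mapsto 2m-\bar z$ to re-index the two series before the Green-function symmetry can be applied. That bookkeeping is routine but is where the reflections actually enter.) For \eqref{eqn::poissonkernel2} you take a genuinely different route: the paper compares $P_{\phi(\Omega)}$ to the Poisson kernel of the full rectangle $(0,1)\times(0,\ii K)$ via the maximum principle and deduces strict positivity of the normal derivative from the explicit kernel on the rectangle, whereas you apply the Hopf boundary-point lemma directly to $P_{\phi(\Omega)}(\cdot,w)$, which is positive in $\phi(\Omega)$ and vanishes on the Dirichlet arc containing $u$. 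Your argument is local (only uses the interior-ball condition at $u$, which holds since $u$ is in the relative interior of a horizontal segment away from the slit tips and corners) and is arguably more robust: it does not require checking a global domination between two Poisson kernels on a domain with both slits and mixed boundary conditions, which is a comparison one has to set up carefully since the two kernels differ both in domain and in boundary type. One must also confirm that $\partial_n P_{\phi(\Omega)}(z,w)|_{z=u}$ exists as an honest derivative; Schwarz reflection across the Dirichlet segment containing $u$ gives real-analyticity there, so this is fine, and it also gives the cleanest version of the Hopf argument (a non-constant harmonic function cannot have a vanishing gradient at a point where it changes sign only across a single analytic arc). You are also right that \eqref{eqn::poissonkernel2} is only meaningful and only used for $u\neq w$; in the proof of Proposition~\ref{prop::startingpoint} it is applied with $u=\phi(v_k)$ and $w=\phi(w_j)$ on disjoint sub-arcs. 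Overall the proposal is correct.
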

\begin{proof}
Eq.~\eqref{eqn::poissonkernel1} follows from the construction of Poisson kernel and the symmetry of Green's function. 

For~\eqref{eqn::poissonkernel2}, we consider the standard Poisson kernel in $(0,1)\times(0,\ii K)$ and denote it by $P_{(0,1)\times(0,\ii K)}$. It has the same asymtotic property as $P_{\phi(\Omega)}$. Then, by the maximum principle, we have
\[P_{\phi(\Omega)}\ge P_{(0,1)\times(0,\ii K)}.\]
This implies that  
\[\partial_{n} P_{\phi(\Omega)}(z,w)|_{z=u}\ge \partial_{n} P_{(0,1)\times(0,\ii K)}(z,w)|_{z=u}>0.\]
This completes the proof.
\end{proof} 

\subsection{Proof of Proposition~\ref{prop::startingpoint}}
\label{subsec::startingpoint}

From Wilson's algorithm, we give an equivalent description of the branch $\gamma_{\delta}$. Define $\tilde\Omega^\delta$ to be the graph obtained from $\Omega$ by viewing each boundary arc $(x^\delta_{2i-1}x^\delta_{2i})$ as a vertex for $1\le i\le N$ and viewing different boundary arcs as different vertices. We sample a simple radom walk $\Gamma_\delta$ on $\tilde\Omega^\delta$, which starts from $(x_1^\delta x_2^\delta)$ and ends at $(x_{2N-1}^\delta x_{2N}^\delta)$. The loop-erased random walk $\tilde{\gamma}_\delta$ is defined as follows by induction. Define $\gamma_\delta(0):=(x_1^\delta x_2^\delta)$. Suppose for $1\le j\le k$, the step $\gamma_\delta(j)$ has been defined. Define $n_k:=\max\{m:\Gamma_\delta(m)=\gamma_\delta(k)\}$. Then, we define $\gamma_\delta(k+1):=\Gamma_\delta(n_k+1)$. We end the induction when $\tilde{\gamma}_\delta$ arrives at $(x_{2N-1}^{\delta}x_{2N}^{\delta})$. Note that $\tilde{\gamma}_{\delta}$ stopped at the first hitting time of $\cup_{i=2}^{N}(x^\delta_{2i-1}x^\delta_{2i})$ has the same law as $\gamma_{\delta}$ stopped at the same hitting time.

In the following lemma, we connect the simple random walk on $\tilde{\Omega}^{\delta}$ to the conformal map $\phi$ in Lemma~\ref{lem::uniformizingcm}. Recall that 
$K:=\Im\phi(x_{2N})$ is the height of the rectangle $\phi(\Omega)$. 
\begin{lemma}\label{lem::harm}
For every $v^\delta\in\Omega^\delta$, we define $\lambda^\delta(v^\delta)$ to be the probability that a simple random walk on $\tilde\Omega^\delta$ starting from $v^\delta$ hits $(x^\delta_{2N-1}x^\delta_{2N})$ before $(x_1^\delta x_2^\delta)$ and regard $\lambda^\delta$ as a function on $\Omega^\delta$. The discrete harmonic function $\lambda^\delta$ converges to $\frac{1}{K}\Im \phi$ locally uniformly as $\delta\to 0$. 
\end{lemma}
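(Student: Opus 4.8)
\textbf{Proof proposal for Lemma~\ref{lem::harm}.}

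The plan is to identify $\lambda^\delta$ as a discrete harmonic function with mixed (Dirichlet/Neumann) boundary conditions and to match its scaling limit with the unique harmonic function on $\phi(\Omega)$ carrying the same boundary data, namely $\tfrac{1}{K}\Im\phi$ (this normalization is exactly so that the Dirichlet part equals $0$ on $(x_1x_2)$ and $1$ on $(x_{2N-1}x_{2N})$, and the free boundary arcs of $\partial\Omega$ become reflecting). First I would note that, by the definition of the collapsed graph $\tilde\Omega^\delta$, the simple random walk reflects off each wired arc $(x^\delta_{2i-1}x^\delta_{2i})$ for $i\in\{2,\ldots,N-1\}$ (these are single vertices that are neither the source nor the target), and is absorbed on $(x^\delta_1x^\delta_2)$ and $(x^\delta_{2N-1}x^\delta_{2N})$; on the non-wired arcs $(x^\delta_{2i}x^\delta_{2i+1})$ the walk is simply a killed/reflected walk at the discrete boundary of $\Omega^\delta$. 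Hence $\lambda^\delta$ is discrete harmonic in the interior of $\Omega^\delta$, equals $0$ near $(x^\delta_1x^\delta_2)$, equals $1$ near $(x^\delta_{2N-1}x^\delta_{2N})$, is locally constant along each intermediate wired slit-arc (the common value being the harmonic measure from that collapsed vertex), and has vanishing discrete normal derivative along the free arcs.

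The core of the argument is a precompactness-plus-identification scheme. I would first establish equicontinuity of $\{\lambda^\delta\}$ on compact subsets of $\Omega$ — this follows from standard discrete elliptic estimates (a discrete Harnack inequality / the regularity theory for discrete harmonic functions with bounded Dirichlet data, as in the references to Chelkak--Smirnov-type tools cited in the paper, e.g.\ \cite{ChelkakWanMassiveLERW}) together with boundary regularity coming from the $C^1$ assumption on $\partial\Omega$ and~\eqref{eqn::convpoly}. By Arzel\`a--Ascoli, along any subsequence $\lambda^\delta$ converges locally uniformly to some harmonic $h:\Omega\to[0,1]$. The next step is to check that $h$ has the asserted boundary behavior: $h=0$ on $(x_1x_2)$, $h=1$ on $(x_{2N-1}x_{2N})$, $\partial_n h=0$ on the free arcs, and $h$ is constant on each intermediate wired arc $(x_{2i-1}x_{2i})$ with, moreover, no net flux across it — equivalently $h\circ\phi^{-1}$ extends harmonically across each horizontal slit of $\phi(\Omega)$. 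Transporting through $\phi$, the function $h\circ\phi^{-1}$ on the slit rectangle is bounded harmonic, vanishes on the bottom side, equals $1$ on the top side, has zero normal derivative on the two vertical sides, and is ``transparent'' across the slits; reflecting in the two vertical sides and then periodizing (exactly the domain $\tilde R$, resp.\ $S$, built in the proof of Lemma~\ref{lem::Pois}) turns it into a bounded harmonic function on the infinite strip $\R\times(0,K)$ with boundary values $0$ and $1$, which by the maximum principle must be $\Im(z)/K$. Pulling back, $h=\tfrac1K\Im\phi$. Since the limit is the same for every subsequence, the full family converges.

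The main obstacle is the boundary identification at the intermediate wired arcs: one must show that in the limit $h\circ\phi^{-1}$ is genuinely harmonic \emph{across} each slit and not merely continuous with a possible gradient jump. Discretely this is the statement that the net probability flux of the reflected walk into a collapsed slit-vertex is $O(\delta)$ relative to the flux scale, which requires a quantitative estimate on how the random walk on $\tilde\Omega^\delta$ interacts with a slit of macroscopic length — essentially a discrete version of the fact that a horizontal slit is a removable set for bounded harmonic functions satisfying the zero-flux condition. I would handle this by a coupling/reflection argument: decompose the walk started near one side of the slit according to whether it hits the slit-vertex, and use that conditioned on hitting it, the walk re-emerges with a law that (in the scaling limit, by the convergence of discrete harmonic measure on $C^1$ domains) is symmetric under reflection across the slit up to $o(1)$ error; this forces the limiting normal-derivative jump to vanish. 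Equicontinuity up to the free arcs (the Neumann part) is also somewhat delicate but is by now standard for $C^1$ boundaries via reflection of the discrete domain. Once these two boundary points are in place, the uniqueness step is immediate from the maximum principle as indicated.
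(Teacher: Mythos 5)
Your strategy is genuinely different from the paper's, and the difference matters. The paper does not work with the real function $\lambda^\delta$ alone: it introduces the discrete harmonic conjugate $r^\delta$ on the dual graph, forms the discrete holomorphic function $\tilde\phi^\delta = r^\delta + \ii\lambda^\delta$, and identifies any subsequential limit $\tilde\phi$ with $M\phi$ (and then $M=1/K$) directly via the uniqueness statement in Lemma~\ref{lem::uniformizingcm}. The crucial point is that the zero-flux condition at each intermediate collapsed vertex (for $i\in\{2,\ldots,N-1\}$, $\lambda^\delta$ is discretely harmonic there since that vertex is neither source nor sink of the walk) translates in the conjugate picture into the exact statement that $r^\delta$ takes \emph{the same} constant value $M^\delta$ on every free arc $(x_{2i}^\delta x_{2i+1}^\delta)$. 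This turns what you call the ``main obstacle'' into a boundary condition on $\tilde\phi^\delta$ rather than an analytic removability claim, and then the identification is pure Schwarz--Christoffel uniqueness. The only extra work in this route is establishing uniform boundedness of $M^\delta$, which the paper does by a compactness/contradiction argument; your route avoids this because $\lambda^\delta\in[0,1]$ is automatically bounded.

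Where your argument has a real gap is precisely at the step you flag: showing $h\circ\phi^{-1}$ is ``transparent'' across the horizontal slits of $\phi(\Omega)$. The coupling/reflection argument you sketch aims to show the \emph{pointwise} normal-derivative jump across each slit vanishes; but this is both stronger than what is available discretely and not clearly attainable by that route — the re-emergence distribution from the collapsed vertex is not a local reflection, so there is no obvious symmetry to exploit. Moreover, ``bounded harmonic, constant on the slit'' does \emph{not} by itself imply harmonic extension across the slit; you need to use the \emph{integral} zero-flux condition $\int_{(x_{2i-1}x_{2i})}\partial_n h = 0$, which comes from the discrete harmonicity of $\lambda^\delta$ at the collapsed vertex and needs a convergence-of-fluxes argument (e.g.\ via the Beurling-type estimates the paper invokes). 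Once you have zero flux, the correct route to uniqueness is not ``extend harmonically and periodize'' but rather a Hopf-lemma/maximum-principle argument: if $v := h\circ\phi^{-1} - \Im(z)/K$ were a nonzero constant $c'$ on a slit, $v$ would attain its max (or min) strictly on that slit and the Hopf lemma would force a nonzero flux, contradiction; then with $c'=0$ everywhere, $v$ vanishes on all Dirichlet boundary pieces and is Neumann on the vertical sides, so $v\equiv 0$. I would recommend either making this zero-flux/Hopf argument explicit, or (more cleanly) switching to the paper's harmonic-conjugate formulation, which makes the issue evaporate.
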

\begin{proof}
Note that $\lambda^\delta$ is discrete harmonic on $\Omega^\delta\setminus\cup_{i=1}^{N}(x^\delta_{2i-1}x^\delta_{2i})$ and it is constant on each $(x_{2i-1}^\delta x_{2i}^\delta)$ for $1\le i\le N$. Moreover, $\lambda^\delta=0$ on $(x_1^\delta x_2^\delta)$ and $\lambda^\delta=1$ on $(x_{2N-1}^\delta x_{2N}^\delta)$. Denote by $r^\delta$ the discrete harmonic conjugate of $\lambda^\delta$ defined on the dual graph of $\Omega^{\delta}$, which equals $0$ on $(x^\delta_{2N}x^\delta_1)$. By the definition of $\lambda^\delta$, it is clear that $r^\delta$ equals the same positive constant on $(x^\delta_{2i}x^\delta_{2i+1})$ for $1\le i\le N-1$. 
We denote this constant by $M^{\delta}$. 

Define $\tilde{\phi}^{\delta}=r^\delta+\ii \lambda^\delta$. If $\{M^\delta\}_{\delta>0}$ is uniformly bounded, for any subsequence $\delta_n\to 0$, there exists a subsequence, still denoted by $\delta_n$, such that $\tilde{\phi}^{\delta_n}$ converges to an holomorphic function $\tilde\phi$ locally uniformly and $M^{\delta_n}$ converges to a positive constant $M$. By discrete Beurling estimate, we have 
\[\Re\tilde\phi|_{\cup_{i=1}^{N-1}(x_{2i}x_{2i+1})}=M,\quad \text{and}\quad\Im\tilde\phi|_{(x_{2N-1}x_{2N})}=1.\]
Thus, we have $\tilde\phi=M\phi$ and $M=\frac{1}{K}$ as desired. 

It remains to prove that $\{M^\delta\}_{\delta>0}$ is uniformly bounded. If this is not the case, suppose $M^{\delta_{n}}\to\infty$ as $n\to\infty$ for a subsequence $\{\delta_n\}_{n\ge 1}$ and consider $\{\frac{\tilde{\phi}^{\delta_n}}{M^{\delta_n}}\}_{n\ge 1}$. Then any subsequential limit $\tilde\phi$ is identically zero. But by discrete Beurling estimate, we have $\Re\tilde\phi|_{\cup_{i=1}^{N-1}(x_{2i}x_{2i+1})}=1$, which is a contradiction. This completes the proof.
\end{proof}
\begin{proof}[Proof of Proposition~\ref{prop::startingpoint}]
It suffices to prove for every $\tilde{a}, \tilde{b}\in(x_1x_2)$ such that $[ab]\cap [\tilde{a}\tilde{b}]=\emptyset$, choose $\tilde a^\delta, \tilde b^\delta\in\partial\Omega^\delta$ such that $(\tilde a^\delta\tilde b^\delta)$ converges to $(\tilde{a}\tilde{b})$ in metric~\eqref{eqn::curves_metric}, we have
\begin{equation}\label{eqn::startingpoint}
\lim_{\delta\to 0}\frac{\PP[\LX_{\delta}\in(a^\delta b^\delta)]}{\PP[\LX_{\delta}\in(\tilde a^\delta \tilde b^\delta)]}=\frac{\phi(b)-\phi(a)}{\phi(\tilde{b})-\phi(\tilde{a})}.
\end{equation}

We may assume $\tilde{a},\tilde{b}, a, b$ are in counterclockwise order. For any $w^\delta\in\Omega^\delta$, denote by $\PP^{w^\delta}$ the law of simple random walk in $\tilde\Omega^\delta$ starting from $w^\delta$. Then, for every $a^\delta, b^\delta\in(x_1^\delta x_2^\delta)$, we have
\[\frac{\PP[\LX_{\delta}=a^\delta]}{\PP[\LX_{\delta}=b^\delta]}=\frac{\sum_{\substack{v^\delta\sim a^\delta\\ v^\delta\notin\partial\Omega^\delta}}\PP^{v^\delta}[\text{SRW hits }(x_{2N-1}^\delta x_{2N}^\delta)\text{ before }(x_1^\delta x_2^\delta)]}{\sum_{\substack{w^\delta\sim b^\delta\\ w^\delta\notin\partial\Omega^\delta}}\PP^{w^\delta}[\text{SRW hits }(x_{2N-1}^\delta x_{2N}^\delta)\text{ before }(x_1^\delta x_2^\delta)]}.\]
Recall that $\lambda^\delta$ is defined in Lemma~\ref{lem::harm}. Then, we have
\begin{equation}\label{eqn::3}
\frac{\PP[\LX_{\delta}\in(a^\delta b^\delta)]}{\PP[\LX_{\delta}\in(\tilde a^\delta \tilde b^\delta)]}=\frac{\sum_{\substack{v^\delta\sim (a^\delta b^\delta)\\ v^\delta\notin\partial\Omega^\delta}}\lambda^\delta(v^\delta)}{\sum_{\substack{w^\delta\sim (\tilde a^\delta \tilde b^\delta)\\ w^\delta\notin\partial\Omega^\delta}}\lambda^\delta(w^\delta)},
\end{equation}
where the sum is taken over the endpoints of all egdes connecting to $(a^\delta b^\delta)$ or $(\tilde a^\delta \tilde b^\delta)$.

Now, we consider the division $\{w_0=\tilde{a}, w_1,\ldots,w_m=\tilde{b}\}$ of $(\tilde{a}\tilde{b})$ and the division $\{v_0=a,v_1,\ldots,v_n=b\}$ of $(ab)$ such that $d(w_j,w_{j+1})<s$ and $d(v_k,v_{k+1})<s$ for $0\le j\le m-1$ and $0\le k\le n-1$, where $s$ is a small constant which will be determined later. Suppose they are in counterclockwise order. Choose the corresponding divisions $\{w^\delta_0=\tilde{a}^{\delta},w^\delta_1,\ldots,w^\delta_m=\tilde{b}^{\delta}\}$ of $(\tilde a^\delta\tilde b^\delta)$ and the division $\{v^\delta_0=a^\delta,v^\delta_1,\ldots,v^\delta_n=b^\delta\}$ of $(a^\delta b^\delta)$ on $\partial\Omega^\delta$. In this proof, we define $\partial\Omega^\delta:=\cup_{i=1}^{N}(x_{2i-1}^\delta x_{2i}^{\delta})$ and $\partial\Omega:=\cup_{i=1}^{N}(x_{2i-1}x_{2i})$. 
Define 
\[\tilde h_j^\delta(z^\delta):=\PP^{z^\delta}[\text{SRW hits }\partial\Omega^\delta\text{ at }(w_j^\delta w_{j+1}^\delta)]\]
and
\[h_k^\delta(z^\delta):=\PP^{z^\delta}[\text{SRW hits }\partial\Omega^\delta\text{ at }(v^\delta_k v^\delta_{k+1})]\]
 for $0\le j\le m-1$ and $0\le k\le n-1$.
Define $\tilde h_j$ to be the harmonic function in $\Omega$ with the following boudary condition: $\tilde h_j=1$ on $(w_jw_{j+1})$ and $\tilde h_j=0$ on $\partial\Omega\setminus(w_jw_{j+1})$ and $\partial_n \tilde h_j\circ\phi^{-1}=0$ on $\cup_{i=1}^{N}(\phi(x_{2i})\phi(x_{2i+1}))$. Define $h_k$ to be the harmonic function in $\Omega$ with the following boudary condition: $h_k=1$ on $(v_kv_{k+1})$ and $h_k=0$ on $\partial\Omega\setminus(w_jw_{j+1})$ and $\partial_n h_k\circ\phi^{-1}=0$ on $\cup_{i=1}^{N}(\phi(x_{2i})\phi(x_{2i+1}))$. By the same proof of Lemma~\ref{lem::harm}, we can prove that $\tilde h_j^\delta$ converges to $\tilde h_j$ and $h_k^\delta$ converges to $h_k$ locally uniformly.
By~\cite[Corollary~3.8]{ChelkakWanMassiveLERW}, for every $\epsilon>0$, there exists $s_\epsilon>0$ such that for every $v\in \Omega$ such that $d(v,(v_k v_{k+1}))<s_\epsilon$ and $v^\delta\in\Omega^\delta\setminus\partial\Omega^\delta$ such that $d(v^\delta,v)<s_\epsilon$, we have
\begin{equation}\label{eqn::4}
(1-\epsilon)\frac{1}{K}\frac{\phi(v)}{\tilde h_j(v)}\le\frac{\lambda^\delta(v^\delta)}{\tilde h_j^\delta(v^\delta)}\le (1+\epsilon)\frac{1}{K}\frac{\phi(v)}{\tilde h_j(v)}.
\end{equation}
By letting $v\to v_k$ with $\Re\phi(v)=\Re\phi(v_k)$, we have 
\begin{equation}\label{eqn::5}
\frac{\phi(v)}{\tilde h_j(v)}\to \frac{1}{\partial_n\left(\tilde h_j\circ\phi^{-1}\right)(\phi(v_k))}.
\end{equation}
Note that 
\[h_j\circ\phi^{-1}(\cdot)=\int_{\phi(w_j)}^{\phi(w_{j+1})}P_{\phi(\Omega)}(\cdot,w)\ud w.\]
By~\eqref{eqn::poissonkernel2}, we have 
\[\partial_n P_{\phi(\Omega)}(u,\phi(w_j))|_{u=\phi(v_k)}>0.\]
Thus, for every $\epsilon>0$, there exists $s'_\epsilon$, such that if $d(w_j,w_{j+1})<s'_\epsilon$, we have
\begin{equation}
1-\epsilon\le\frac{\partial_n\left(\tilde h_j\circ\phi^{-1}\right)(\phi(v_k))}{(\phi(w_{j+1})-\phi(w_{j}))\partial_n P_{\phi(\Omega)}(u,\phi(w_j))|_{u=\phi(v_k)}}\le 1+\epsilon.
\end{equation}
Now we choose $s<s_\epsilon\wedge s'_\epsilon$. Combining~\eqref{eqn::3},~\eqref{eqn::4} and~\eqref{eqn::5}, for every $v^\delta\sim (v^\delta_k v^\delta_{k+1})$ and $v^\delta\notin\partial\Omega^\delta$, we have
\begin{equation}\label{eqn::6}
(1-\epsilon)^2\le K\frac{\lambda^\delta(v^\delta)}{\tilde h_j^\delta(v^\delta)}(\phi(w_{j+1})-\phi(w_{j}))\partial_n P_{\phi(\Omega)}(u,\phi(w_j))|_{u=\phi(v_k)}\le (1+\epsilon)^2
\end{equation}
This implies
\begin{equation}\label{eqn::7}
(1-\epsilon)^2\le K\frac{\sum_{\substack{v^\delta\sim (v_k^\delta v_{k+1}^\delta)\\ v^\delta\notin\partial\Omega^\delta}}\lambda^\delta(v^\delta)(\phi(w_{j+1})-\phi(w_{j}))\partial_n P_{\phi(\Omega)}(u,\phi(w_j))|_{u=\phi(v_k)}}{\sum_{\substack{v^\delta\sim (v_k^\delta v_{k+1}^\delta)\\ v^\delta\notin\partial\Omega^\delta}}\tilde h_j^\delta(v^\delta)}\le (1+\epsilon)^2.
\end{equation}
Similarly, we have
\begin{equation}\label{eqn::8}
(1-\epsilon)^2\le K\frac{\sum_{\substack{w^\delta\sim (w_j^\delta w_{j+1}^\delta)\\ w^\delta\notin\partial\Omega^\delta}}\lambda^\delta(w^\delta)(\phi(v_{k+1})-\phi(v_{k}))\partial_n P_{\phi(\Omega)}(u,\phi(v_k))|_{u=\phi(w_j)}}{\sum_{\substack{w^\delta\sim (w_j^\delta w_{j+1}^\delta)\\ w^\delta\notin\partial\Omega^\delta}} h_k^\delta(w^\delta)}\le (1+\epsilon)^2.
\end{equation}
By definition of $\tilde h^\delta_j$ and $h^\delta_k$ and the reversibility of random walk, we have
\begin{equation}\label{eqn::9}
\sum_{\substack{v^\delta\sim (v_k^\delta v_{k+1}^\delta)\\ v^\delta\notin\partial\Omega^\delta}}\tilde h_j^\delta(v^\delta)=\sum_{\substack{w^\delta\sim (w_j^\delta w_{j+1}^\delta)\\ w^\delta\notin\partial\Omega^\delta}} h_k^\delta(w^\delta).
\end{equation}
Combining~\eqref{eqn::poissonkernel1} and~\eqref{eqn::9}, we have
\begin{equation}
(1-\epsilon)^4\frac{\phi(v_{k+1})-\phi(v_k)}{\phi(w_{j+1})-\phi(w_j)}\le\frac{\sum_{\substack{v^\delta\sim (v_k^\delta v_{k+1}^\delta)\\ v^\delta\notin\partial\Omega^\delta}}\lambda^\delta(v^\delta)}{\sum_{\substack{w^\delta\sim (w_j^\delta w_{j+1}^\delta)\\ w^\delta\notin\partial\Omega^\delta}}\lambda^\delta(w^\delta)}\le(1+\epsilon)^4\frac{\phi(v_{k+1})-\phi(v_k)}{\phi(w_{j+1})-\phi(w_j)}.
\end{equation}
Thus, we have
\begin{equation}
(1-\epsilon)^4\frac{\phi(b)-\phi(a)}{\phi(\tilde{b})-\phi(\tilde{a})}\le\frac{\sum_{\substack{v^\delta\sim (a^\delta b^\delta)\\ v^\delta\notin\partial\Omega^\delta}}\lambda^\delta(v^\delta)}{\sum_{\substack{w^\delta\sim (\tilde a^\delta \tilde b^\delta)\\ w^\delta\notin\partial\Omega^\delta}}\lambda^\delta(w^\delta)}\le(1+\epsilon)^4\frac{\phi(b)-\phi(a)}{\phi(\tilde{b})-\phi(\tilde{a})}
\end{equation}
By letting $\delta\to 0$ and $\epsilon\to 0$, we get~\eqref{eqn::startingpoint}. This completes the proof.
\end{proof}

\section{Loop-erased random walk branch: the Loewner chain}
\label{sec::loewnerchain}
We assume the same setup as in Theorem~\ref{thm::lerwconv}. Recall that $\gamma_{\delta}$ is the branch in the UST in $(\Omega^{\delta}; x_1^{\delta}, \ldots, x_{2N}^{\delta})$ connecting $(x_1^{\delta}x_2^{\delta})$ to $(x_{2N-1}^{\delta}x_{2N}^{\delta})$ stopped when it hits $\cup_{i=2}^N(x_{2i-1}^{\delta}x_{2i}^{\delta})$. From Lemma~\ref{lem::tight}, the family $\{\gamma_{\delta}\}_{\delta>0}$ is tight. Suppose $\gamma$ is any subsequental limit and denote by $\LX=\gamma\cap(x_1x_2)$. Recall that $\phi$ is the conformal map in Lemma~\ref{lem::uniformizingcm}. 
From Corollary~\ref{coro::LX}, the law of $\phi(\LX)$ is uniform in $(0,1)$. The goal of this section is to derive the law of $\gamma$ given $\LX$, i.e. to show Theorem~\ref{thm::lerwconv}~\ref{item::thm2}. 

\begin{proposition}\label{prop::lawofgamma}
Let $\varphi$ be any conformal map from $\Omega$ onto $\HH$ such that $\varphi(x_1)<\cdots<\varphi(x_{2N})$. 
Given $\LX$, the conditional law of $\gamma$ is the image under $\varphi^{-1}$ of Loewner chain with driving function given by~\eqref{eqn::lerw_loewner} up to the first time that $\varphi(x_1)$ or $\varphi(x_2)$ is swallowed.  
\end{proposition}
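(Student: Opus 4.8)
The strategy is the standard martingale-observable approach to identifying a scaling limit via Loewner evolution, now applied to the LERW branch conditioned on its starting point $\LX$. The plan is to first fix the conformal image $\varphi$ and work in $\HH$ with marked points $y_j=\varphi(x_j)$, and set up a discrete martingale observable for $\gamma_\delta$ (conditioned on $\LX_\delta$). The natural candidate, following the first conclusion and the discussion in the introduction, is a hitting probability: for $z^\delta\in\Omega^\delta$, let $M_\delta(z^\delta)$ be (a suitably normalised version of) the probability that the LERW branch, or an associated simple random walk on $\tilde\Omega^\delta$, separates $z^\delta$ from a reference arc in a prescribed way, or equivalently a discrete harmonic function built from the Poisson kernel of Lemma~\ref{lem::Pois} with one argument on the current tip. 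By Wilson's algorithm and the domain Markov property of LERW, evaluating this observable in the slit domain $\Omega^\delta\setminus\gamma_\delta[0,t]$ gives a martingale in $t$ with respect to the filtration generated by $\gamma_\delta$ (conditioned on $\LX_\delta$). I would show, using the convergence of discrete harmonic functions with mixed Dirichlet/Neumann boundary data (exactly as in Lemma~\ref{lem::harm} and via \cite{ChelkakWanMassiveLERW}), that $M_\delta$ converges locally uniformly to the imaginary part of an explicit holomorphic function $M$ on $\Omega\setminus\gamma[0,t]$, which in the $\HH$-coordinate is expressed through $\PartF$ and the slit map $f(\cdot;y_1,\dots,y_{2N})$.

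The second step is to pass to the limit: by tightness (Lemma~\ref{lem::tight}) any subsequential limit $\gamma$ is a continuous curve touching $\partial\Omega$ only at its endpoints, so in the $\HH$-coordinate it admits a Loewner description with some continuous driving function $W_t$ and evolving force points $V_t^j$ solving $\ud V_t^j = 2\ud t/(V_t^j - W_t)$ up to the swallowing time of $y_1$ or $y_2$. The limiting observable $z\mapsto \Im M_t(z)$, evaluated along the Loewner flow $g_t$, must be a (local) martingale. Expanding $M_t(z)$ as a function of $(W_t, V_t^1,\dots,V_t^{2N})$ and applying It\^o's formula to the composition with $g_t$, the drift term must vanish identically in $z$; the leading singular terms near the tip force $W_t$ to be a semimartingale with quadratic variation $2\,dt$, i.e. $\ud W_t = \sqrt{2}\,\ud B_t + b_t\,\ud t$, and matching the next order term in the expansion of the vanishing drift identifies $b_t = 2(\partial_w\log\PartF)(W_t;V_t^1,\dots,V_t^{2N})$. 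This is the computation that produces~\eqref{eqn::lerw_loewner}. Conformal covariance of $\PartF$ (inherited from that of $f$ in Lemma~\ref{lem::uniformizingcm}) guarantees that the resulting SDE is independent of the choice of $\varphi$, so the statement is well posed.

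The main obstacle — and, as the authors themselves flag, the bulk of Section~\ref{sec::loewnerchain} — is the analysis of the limiting holomorphic observable itself: one must establish that the relevant Poisson kernel / hitting probability in the slit polygon is smooth enough in the marked points (and in the tip) to justify the It\^o expansion, and to extract the explicit drift $\partial_w\log\PartF$. Concretely, this requires: (i) identifying the limiting discrete harmonic function with the imaginary part of an explicit holomorphic function with the complicated mixed boundary conditions coming from the alternating wiring (Dirichlet on the wired arcs, Neumann on the free arcs, with a prescribed singularity at the tip); (ii) proving regularity of this function jointly in $z$ and in the positions of the $y_j$, so that the drift terms in It\^o's formula are well defined; and (iii) verifying that the drift coefficient so obtained is exactly $2\partial_w\log\PartF$ with $\PartF=\partial_z f$ as in~\eqref{eqn::partitionfunction}. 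Once the observable and its martingale property are in hand, the remaining steps — precompactness of the driving functions, extraction of the SDE, and pathwise uniqueness of~\eqref{eqn::lerw_loewner} (so that all subsequential limits coincide, upgrading convergence along subsequences to weak convergence) — are routine, following the template of \cite{LawlerSchrammWernerLERWUST} and \cite{HanLiuWuUST}.
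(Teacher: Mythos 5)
Your overall strategy matches the paper's: a martingale observable built from LERW hitting probabilities, convergence of discrete harmonic functions via \cite{ChelkakWanMassiveLERW}, and It\^o plus a PDE to read off the SDE~\eqref{eqn::lerw_loewner}. But the step where you actually extract the drift and quadratic variation is the weak point, and it is precisely where the paper expends its effort. You propose to "expand $M_t(z)$ near the tip" and match the leading singular terms to force $\ud\langle W\rangle_t=2\,\ud t$ and the next-order term to give $b_t=2\partial_w\log\PartF$. That is not what the paper does, and it is not clear this would close here. The paper instead works with a family of boundary observables $F(w,x;\by)$ indexed by an auxiliary boundary point $x\in(y_{2N-1},y_{2N})$ (Lemma~\ref{lem::mart_observable}), proves that $F$ satisfies a second-order PDE whose coefficient is exactly $2\partial_w\log\PartF$ (Lemma~\ref{lem::observable_PDE}), and then obtains, for each $x$, a single linear relation~\eqref{eqn::final} in the two unknowns $(\ud R_t,\,\ud\langle W\rangle_t-2\ud t)$. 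To solve this system one must show the coefficient vector $(\partial_wF,\partial_w^2F)$ is not the same up to scalar for all $x$ — this is the nondegeneracy statement~\eqref{eqn::neq}, which is itself nontrivial and rests on Lemma~\ref{lem::aux2}, using the explicit structure of $g$, $L$ and the polynomial $\tilde Q$.

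Concretely, two ideas are missing from your plan. First, the PDE is not a consequence of a formal expansion near the tip: it is proved by showing a specific second-order differential of $P_\HH$ vanishes, via uniqueness of a mixed Dirichlet/Neumann boundary-value problem (the same device as Lemma~\ref{lem::probuniqueness}); the coefficient $2\partial_w\log\PartF$ enters through the singular structure of $P_\HH$ at the tip and the normalization $\PartF=\partial_z f$. You treat item (iii) of your outline as a "verification," but in the paper it is a derivation that uses the explicit formulas~\eqref{eqn::conformalmap_slitrect},~\eqref{eqn::auxfunctiong} and the uniqueness lemma. Second, you do not address the nondegeneracy needed to disentangle drift from quadratic variation. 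Relatedly, the identity $F(w,x;\by)=\partial_n P_{\HH}(z,w;\by)|_{z=x}/\PartF$ versus $\partial_n P_{\HH}(z,x;\by)|_{z=w}/\PartF$ — the symmetry of Lemma~\ref{lem::sym}, coming from LERW reversibility — is what lets the martingale from Proposition~\ref{prop::jointdis} be rewritten as a martingale in the tip variable at all; your sketch skips over this. The rest of your outline (tightness, Loewner description, conformal covariance, uniqueness of the SDE) is correct and consistent with the paper's route.
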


This section is organized as follows. We first give prelimiaries on Loewner chain and on the uniformizing conformal map $f$ in Section~\ref{subsec::pre}. Then we introduce and analyze an holomorphic observable in Sections~\ref{subsec::observable} and~\ref{subsec::holomorphic}. 
Finally, we derive the law of $\gamma$ using the observable and complete the proof of Proposition~\ref{prop::lawofgamma} in Section~\ref{subsec::cvgloewner}. 

\subsection{Preliminaries}
\label{subsec::pre}

We first collect basic notions on the Loewner chain. 
We call a compact subset $K$ of $\overline{\HH}$ an $\HH$-hull if $\HH\setminus K$ is simply connected. Riemann's Mapping Theorem asserts that there exists a unique conformal map $g_K$ from $\HH\setminus K$ onto $\HH$ such that
$\lim_{z\to\infty}|g_K(z)-z|=0$.
We call such $g_K$ the conformal map from $\HH\setminus K$ onto $\HH$ normalized at $\infty$. 

Loewner chain is a collection of $\HH$-hulls $(K_{t}, t\ge 0)$ associated with the family of conformal maps $(g_{t}, t\ge 0)$ obtained by solving the Loewner equation: for each $z\in\mathbb{H}$,
\[\partial_{t}{g}_{t}(z)=\frac{2}{g_{t}(z)-W_{t}}, \quad g_{0}(z)=z,\]
where $(W_t, t\ge 0)$ is a one-dimensional continuous function which we call the driving function. Let $T_z$ be the swallowing time of $z$ defined as $\sup\{t\ge 0: \min_{s\in[0,t]}|g_{s}(z)-W_{s}|>0\}$.
Let $K_{t}:=\overline{\{z\in\mathbb{H}: T_{z}\le t\}}$. Then $g_{t}$ is the unique conformal map from $H_{t}:=\mathbb{H}\backslash K_{t}$ onto $\mathbb{H}$ normalized at $\infty$. 
We say that $(K_t, t\ge 0)$ can be generated by a continuous curve $(\eta(t), t\ge 0)$ if for any $t$, the unbounded connected component of $\HH\setminus\eta[0,t]$ coincides with $H_t=\HH\setminus K_t$. 

Schramm Loewner Evolution $\SLE_{\kappa}$ is the random Loewner chain $(K_{t}, t\ge 0)$ driven by $W_t=\sqrt{\kappa}B_t$ where $(B_t, t\ge 0)$ is a standard one-dimensional Brownian motion.
In this case, the Loewner chain is generated by continuous curve. In this article, we focus on $\kappa=2$.

\medbreak

Next, we give preliminaries on the conform map $f$ in Figure~\ref{fig::slitrectangle}. 
Recall that for $\HH$ with marked points $y_1<\cdots<y_{2N}$, we denote by $f(\cdot)=f(\cdot; y_1, \ldots, y_{2N})$ the conformal map from $\HH$ onto a rectangle of unit width with horizontal slits such that it maps $(y_1, y_2, y_{2N-1}, y_{2N})$ to the four corners of the rectangle with $f(y_1)=0$ and it maps $(y_{2i-1}y_{2i})$ to horizontal slit for $i\in\{2, \ldots, N-1\}$. 
From Schwarz-Christorffel formula, the conformal map $f$ has the following form: 
\begin{equation}\label{eqn::conformalmap_slitrect}
f(z)=f(z; y_1, \ldots, y_{2N}):=\int_{y_1}^z \frac{\prod_{\ell=1}^{N-2}(u-\mu^{(\ell)})\ud u}{\prod_{j=1}^{2N}(u-y_j)^{1/2}} \left(\int_{y_1}^{y_2} \frac{\prod_{\ell=1}^{N-2}(u-\mu^{(\ell)})\ud u}{\prod_{j=1}^{2N}(u-y_j)^{1/2}} \right)^{-1}, \quad z\in\HH,
\end{equation}
where $\mu^{(1)}, \ldots, \mu^{(N-2)}$ are well-chosen parameters which will be specified below. 

Define 
\begin{equation}\label{eqn::parametera_polynomial}
\omega_{r}:=\frac{u^{r}\ud u}{\prod_{j=1}^{2N}(u-y_{j})^{1/2}},\quad \text{for }0\le r\le N-2.
\end{equation}
We choose a branch for each $\omega_r$ simultaneously positive on 
\begin{equation}\label{eqn::branchchoice}
\{u\in\C\setminus\{y_1, \ldots, y_{2N}\}: u\in\R \text{ and } y_{2N}<u\}.
\end{equation}
Let $R=\{R(i,j): 1\le i, j\le N-2\}$ be the matrix given by 
\begin{equation}\label{eqn::parametera_matrix}
R(i, j)=\int_{y_{2i+1}}^{y_{2i+2}}\omega_{j-1}, \quad \text{for }1\le i, j\le N-2. 
\end{equation}

\begin{lemma}\cite[Lemma~4.9]{LiuPeltolaWuUST}. \label{lem::parameters}
The matrix $R$ in~\eqref{eqn::parametera_matrix} is invertible. Define 
\begin{equation*}
(\nu_{N-2}, \ldots, \nu_1)^t: =-R^{-1}\left(\int_{y_3}^{y_4}\omega_{N-2},\,\cdots,\,\int_{y_{2N-3}}^{y_{2N-2}}\omega_{N-2}\right)^{t}.
\end{equation*}
Define the polygnomial 
\begin{equation*}
Q(w):=w^{N-2}+\sum_{n=0}^{N-3}\nu_{N-2-n} w^{n}.
\end{equation*} 
Then the parameters $\mu^{(1)}, \ldots, \mu^{(N-2)}$ in~\eqref{eqn::conformalmap_slitrect} are the $N-2$ roots of $Q$. 
\end{lemma}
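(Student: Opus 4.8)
\textbf{Proof proposal for Lemma~\ref{lem::parameters}.}

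The plan is to use the defining geometric property of the conformal map $f$ in Lemma~\ref{lem::uniformizingcm}: the image of each inner arc $(y_{2i+1}y_{2i+2})$, for $1\le i\le N-2$, is a \emph{horizontal} slit, which forces $N-2$ real linear conditions on the $N-1$ coefficients of the degree-$(N-2)$ polynomial $P(u):=\prod_{\ell=1}^{N-2}(u-\mu^{(\ell)})$ appearing in the integrand~\eqref{eqn::conformalmap_slitrect}. Writing $P(u)=\sum_{r=0}^{N-2}c_r u^r$ (with $c_{N-2}=1$ after the overall normalization by the $(y_1y_2)$-integral, which only rescales and does not affect horizontality), the Schwarz--Christoffel representation gives, for $z$ ranging over the arc $(y_{2i+1}y_{2i+2})$,
\[
f(z)-f(y_{2i+1}) \;=\; (\text{const})\cdot \int_{y_{2i+1}}^{z}\frac{P(u)\,\ud u}{\prod_{j=1}^{2N}(u-y_j)^{1/2}}
\;=\;(\text{const})\cdot\sum_{r=0}^{N-2}c_r\int_{y_{2i+1}}^{z}\omega_r,
\]
with $\omega_r$ as in~\eqref{eqn::parametera_polynomial} and the branch fixed by~\eqref{eqn::branchchoice}. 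The slit being horizontal means $\Im\big(f(z)-f(y_{2i+1})\big)=0$ for all $z\in(y_{2i+1}y_{2i+2})$, equivalently $f(y_{2i+2})-f(y_{2i+1})$ is real \emph{and} $f$ is monotone along the arc; since on $(y_{2i+1}y_{2i+2})$ the product $\prod_j(u-y_j)^{1/2}$ is a fixed nonzero multiple of a positive (or purely imaginary) function of constant phase, vanishing of the imaginary part reduces to the single real linear equation $\Im\big(e^{\ii\theta_i}\sum_{r}c_r\int_{y_{2i+1}}^{y_{2i+2}}u^r\,\ud u/\prod_j|u-y_j|^{1/2}\big)=0$; after tracking the phase factor $\theta_i$ one checks it is exactly $\sum_{r=0}^{N-2}c_r\,R(i,r+1)=0$ in the notation~\eqref{eqn::parametera_matrix}. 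Thus the coefficient vector $(c_0,\dots,c_{N-3},1)$ lies in the kernel of the $(N-2)\times(N-1)$ matrix $[R \,|\, (\int_{y_3}^{y_4}\omega_{N-2},\dots,\int_{y_{2N-3}}^{y_{2N-2}}\omega_{N-2})^t]$; provided $R$ is invertible, this pins down $(c_0,\dots,c_{N-3})$ uniquely by Cramer's rule and yields precisely the formula for $(\nu_{N-2},\dots,\nu_1)$ in the statement, so that $P=Q$ and the $\mu^{(\ell)}$ are the roots of $Q$.

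It therefore remains to prove that $R$ is invertible, and this is the main obstacle. The clean way is to argue by contradiction: if $R$ were singular there would be a nonzero real vector $(a_1,\dots,a_{N-2})$ with $\sum_{j}a_j R(i,j)=0$ for all $i$, i.e.\ the nonzero real polynomial $S(u):=\sum_{j=1}^{N-2}a_j u^{j-1}$, of degree $\le N-3$, would satisfy $\int_{y_{2i+1}}^{y_{2i+2}}S(u)\,\omega_0 = 0$ for every $i=1,\dots,N-2$, where $\omega_0 = \ud u/\prod_j(u-y_j)^{1/2}$ with the branch from~\eqref{eqn::branchchoice}. I would then observe that this is a statement purely about real integrals: on each arc $(y_{2i+1}y_{2i+2})$ the differential $\omega_0$ has a single, definite sign (it is a fixed unimodular phase times a positive density), so $\int_{y_{2i+1}}^{y_{2i+2}}S(u)\,\omega_0=0$ with $\omega_0$ of constant sign forces $S$ to change sign on the open arc $(y_{2i+1},y_{2i+2})$, hence to have a root there. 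That produces $N-2$ distinct real roots of $S$, one strictly inside each of the $N-2$ disjoint arcs; but $\deg S\le N-3<N-2$, so $S\equiv 0$, contradicting $(a_1,\dots,a_{N-2})\neq 0$. Hence $R$ is invertible.

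Two technical points need care and are where I expect to spend effort. First, the branch bookkeeping: one must verify that along each arc $(y_{2i+1}y_{2i+2})$ the quantity $\prod_{j=1}^{2N}(u-y_j)^{1/2}$, continued from~\eqref{eqn::branchchoice}, equals $e^{\ii\alpha_i}$ times a positive function, with $\alpha_i$ constant on the arc (it jumps by $\pi/2$ each time $u$ crosses one of the $y_j$), so that both the ``horizontal slit $\Leftrightarrow$ vanishing imaginary part of $\sum_r c_r\int\omega_r$'' equivalence and the ``$\omega_0$ has constant sign on each arc'' claim are justified with the \emph{same} convention used to define $R$ in~\eqref{eqn::parametera_matrix}; this is where I would be most careful, and it is really the crux of making the argument rigorous. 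Second, one should note that the overall multiplicative constant $\big(\int_{y_1}^{y_2}\omega\big)^{-1}$ in~\eqref{eqn::conformalmap_slitrect} is a fixed complex number (indeed the normalization $f(y_1)=0$, unit width, and $(y_1,y_2,y_{2N-1},y_{2N})\mapsto$ corners fix it), so it does not interfere with the linear algebra; and the fact that $f$ as constructed is genuinely the conformal map of Lemma~\ref{lem::uniformizingcm} — in particular that the polynomial $P$ with these coefficients produces slits of the correct heights and a bona fide conformal map — is exactly the content of~\cite[Lemma~4.8]{LiuPeltolaWuUST} and may be invoked. With these two points dispatched, the Cramer's-rule computation identifying $(\nu_{N-2},\dots,\nu_1)$ and hence $Q$ is routine.
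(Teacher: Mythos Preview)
Your overall strategy is correct, but two remarks are in order.

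First, the paper does not actually prove Lemma~\ref{lem::parameters}; it is quoted from~\cite[Lemma~4.9]{LiuPeltolaWuUST}. So there is no ``paper's own proof'' to compare against directly. That said, in Section~\ref{subsec::holomorphic} the paper does prove the invertibility of the closely analogous matrix $\tilde R$ in~\eqref{eqn::R_var}, and it does so by a \emph{different} method than yours: it expands $\det\tilde R$ via the Vandermonde identity as a multiple integral
\[
\det\tilde R=\int_{y_{3}}^{y_{4}}\cdots\int_{y_{2N-3}}^{y_{2N-2}}\prod_{1\le i<j\le N-2}(u_j-u_i)\prod_{i=1}^{N-2}\frac{\ud u_i}{\prod_{j}(u_i-y_j)^{1/2}(u_i-x)^2},
\]
and observes that, since the arcs are ordered and each factor $\prod_j(u_i-y_j)^{1/2}$ has constant phase on its arc, the integrand has constant phase and the integral cannot vanish. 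The same one-line argument applies verbatim to $R$. Your root-counting argument (a nontrivial kernel vector would give a degree $\le N-3$ polynomial with a sign change on each of $N-2$ disjoint arcs) is also valid and is the classical ``Chebyshev system'' proof; the Vandermonde route is shorter and yields an explicit formula for $\det R$, while your route makes transparent why exactly degree $N-2$ is the threshold.

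Second, your reduction of the slit condition to a linear equation is right in its conclusion but slightly mis-argued. Horizontality alone would only give $\Im\int_{y_{2i+1}}^{y_{2i+2}} f'=0$. The sharper geometric fact, visible from Lemma~\ref{lem::uniformizingcm} and the description of $\phi(\Omega)$, is that each slit is \emph{attached to the right side of the rectangle}, so that $f(y_{2i+1})=f(y_{2i+2})$ and hence $\int_{y_{2i+1}}^{y_{2i+2}} P(u)\,\omega_0=0$ as a full complex equation. Since all the entries $R(i,j)$ and the right-hand integrals $\int_{y_{2i+1}}^{y_{2i+2}}\omega_{N-2}$ are purely imaginary (same branch computation you flag), this complex system is a real system for the real coefficients $c_0,\dots,c_{N-3}$, and the Cramer step goes through exactly as you wrote. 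With this correction your proof is complete; the branch bookkeeping you highlight is indeed the only point requiring care, and it is the same computation needed in the paper's Vandermonde argument.
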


In this section, we denote by $\by=(y_1, \ldots, y_{2N})$ to simplify notations. Consequently, $f(z; y_1, \ldots, y_{2N})$ will be denoted by $f(z; \by)$. Recall from~\eqref{eqn::partitionfunction} that, for $y_1<w<y_2<\cdots<y_{2N}$, we denote 
\[\PartF(w; \by)=\partial_z f(z; \by)|_{z=w}.\]
\subsection{Observable}
\label{subsec::observable}
Let $\varphi$ be any conformal map from $\Omega$ onto $\HH$ such that $y_1:=\varphi(x_1)<\cdots<y_{2N}:=\varphi(x_{2N})$. 
Fix $(ab)\subset(x_1x_2)$ and $(cd)\subset(x_{2N-1}x_{2N})$. Suppose $(a^\delta b^\delta)\subset(x^\delta_1x^\delta_2)$ and $(c^\delta d^\delta)\subset(x^\delta_{2N-1}x^\delta_{2N})$ such that $(\Omega^{\delta}; x_1^{\delta}, a^\delta, b^\delta, x_2^{\delta}, \ldots, x_{2N-1}^{\delta}, c^{\delta}, d^{\delta}, x_{2N}^{\delta})$ converges to $(\Omega; x_1, a, b, x_2, \ldots, x_{2N-1}, c,d,x_{2N})$ as~\eqref{eqn::convpoly}.

Recall that we have given an equivalent description of the branch $\gamma_{\delta}$ by simple random on $\tilde\Omega^\delta$ at the beginning of Section~\ref{subsec::startingpoint}. 
Define $\lambda_{c^\delta, d^\delta}^\delta(v^\delta)$ to be the probability that a simple random walk on $\tilde\Omega^\delta$ starting from $v^\delta$ hits $(x_1^\delta x_2^\delta)\cup(x^\delta_{2N-1}x^\delta_{2N})$ at $(c^\delta d^\delta)$ and regard $\lambda^\delta$ as a function on $\Omega^\delta$. The first goal is to derive the limit of the harmonic function $\lambda^{\delta}_{c^{\delta}, d^{\delta}}$. From the boundary conditions of $\lambda^{\delta}_{c^{\delta}, d^{\delta}}$, its limit should be related to the following holomorphic function. 

\begin{lemma}\label{lem::probuniqueness}
There exists a unique holomorphic function $h$ on $\HH$, which satisfies the following boundary conditions:
\begin{enumerate}[label=(\arabic*)]
\item
$\Re h=0$ on $(y_{2N},+\infty)\cup (-\infty,y_1)$; 
\item  $\Re h$ equals the same constant on $\cup_{i=1}^{N-1}(y_{2i},y_{2i+1})$; 
\item
$\Im h$ is constant on $(y_{2i-1},y_{2i})$ for $1\le i\le N-1$; 
\item $\Im h=0$ on $(y_1,y_2)\cup (y_{2N-1},\varphi(c))\cup(\varphi(d),y_{2N})$ and $\Im h=1$ on $(\varphi(c),\varphi(d))$;
\item
$\Im h$ is bounded on $\overline\HH$ and continuous on $\overline\HH\setminus\{\varphi(c),\varphi(d)\}$.
\end{enumerate}
\end{lemma}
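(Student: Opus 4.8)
The plan is to prove existence and uniqueness separately, mirroring the structure used in the proof of Lemma~\ref{lem::Pois}. For \textbf{uniqueness}, suppose $h_1, h_2$ are two such holomorphic functions and set $L := h_1 - h_2$. Then $\Im L$ vanishes on all of $(y_1, y_2)\cup(y_{2N-1},y_{2N})$ and more importantly the \emph{difference} of the boundary data on $(\varphi(c),\varphi(d))$ cancels, so $\Im L = 0$ on $(-\infty,y_1)$-free portions, is a \emph{constant} on each arc $(y_{2i-1},y_{2i})$, and $\Re L = 0$ on the two unbounded arcs while $\Re L$ is a common constant on $\cup_{i=1}^{N-1}(y_{2i},y_{2i+1})$. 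The strategy is to look at the harmonic function $\Im L$: it is bounded on $\overline\HH$, continuous up to the boundary (the singularities at $\varphi(c),\varphi(d)$ have disappeared in $L$), equals $0$ on the ``odd'' arcs that are not slits and is locally constant on the slit arcs, while Cauchy--Riemann forces $\partial_n \Im L = -\partial_\tau \Re L = 0$ on the ``even'' arcs $\cup(y_{2i},y_{2i+1})$ since $\Re L$ is constant there. Reflecting $\Im L$ across the even arcs (Neumann) and using that it is bounded and has constant boundary values on the remaining arcs, a maximum-principle argument analogous to the one in Lemma~\ref{lem::Pois} shows $\Im L$ is constant, hence (being $0$ somewhere) identically $0$; then $\Re L$ is constant, and subtracting that constant shows $h_1 = h_2$ up to an irrelevant additive imaginary constant, which the normalization $\Im h = 0$ on $(y_1,y_2)$ pins down.

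For \textbf{existence}, the natural candidate is built from the Poisson kernel already constructed in Lemma~\ref{lem::Pois}. Transport everything to the slit rectangle $\phi(\Omega)$ via the conformal map $\phi$ of Lemma~\ref{lem::uniformizingcm} (equivalently, work with $f(\cdot;\by)$ on $\HH$ directly). Define the real-valued harmonic function
\[
p(z) := \int_{\phi(c)}^{\phi(d)} P_{\phi(\Omega)}(z, w)\, \ud w,
\]
which by Lemma~\ref{lem::Pois} is harmonic in $\phi(\Omega)$, equals $0$ on $\cup_{i=1}^N(\phi(x_{2i-1})\phi(x_{2i}))$ away from $[\phi(c),\phi(d)]$, equals $1$ on $(\phi(c),\phi(d))$, has vanishing normal derivative on $\cup_{i=1}^N(\phi(x_{2i})\phi(x_{2i+1}))$, and is bounded and continuous up to the boundary except at the two endpoints $\phi(c),\phi(d)$. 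This $p$ will serve as $\Im h$. The point of the Neumann condition on the ``even'' arcs is exactly that $p$ extends to a harmonic conjugate: on a rectangle-with-slits domain, a harmonic function with $\partial_n p = 0$ on the horizontal slit arcs and the two horizontal sides has a single-valued harmonic conjugate $q$ (the periods around the slits vanish precisely because of the Neumann condition, using that each slit together with the relevant sides bounds a region where one can integrate $\ud q = -\partial_n p\, \ud s$ along the Neumann part and $\partial_\tau p\,\ud s$ elsewhere — the constancy of $p$ on those pieces makes $q$ well-defined). Set $h := (q + \ii p)\circ\phi$; then $\Re h = q\circ\phi$ is constant on each arc where $p$ has a normal-derivative condition becomes tangential-derivative condition for $q$, i.e.\ $\Re h$ is locally constant on $\cup(y_{2i-1},y_{2i})$... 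I need to be careful matching which arcs get which condition, but the upshot is that the Cauchy--Riemann pairing swaps the Dirichlet and Neumann arcs, so $p$'s Dirichlet arcs become $\Re h$'s ``constant'' arcs and vice versa, yielding exactly the five listed conditions; the requirement that $\Re h = 0$ on the two unbounded arcs $(y_{2N},\infty)\cup(-\infty,y_1)$ fixes the free additive constant in $q$.

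I expect the \textbf{main obstacle} to be the existence half, specifically verifying that the harmonic conjugate $q$ of $p$ is genuinely single-valued on the multiply-connected-looking slit domain and carefully tracking the correspondence between boundary arcs and their conditions under conjugation — the slit rectangle has nontrivial topology only through the slits, and one must confirm the conjugate periods around each slit vanish, which is where the Neumann boundary condition built into $P_{\phi(\Omega)}$ does the work. Once $q$ is known to exist, reading off the five boundary conditions for $h$ is bookkeeping; matching the normalization conventions ($f(y_1)=0$, $\Re h = 0$ on the unbounded arcs, $\Im h = 0$ on $(y_1,y_2)$) requires choosing the additive constant in $q$ and checking consistency with the corner behavior of $\phi$ at $x_1, x_2, x_{2N-1}, x_{2N}$. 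The boundedness and continuity of $\Im h$ on $\overline\HH\setminus\{\varphi(c),\varphi(d)\}$ transfer directly from the corresponding properties of $P_{\phi(\Omega)}$ established in Lemma~\ref{lem::Pois}.
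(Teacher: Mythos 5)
Your proof has genuine gaps in both halves, and they stem from the same structural point that goes unaddressed: condition (2), that $\Re h$ takes the \emph{same} constant on every arc $(y_{2i},y_{2i+1})$, translates via the Cauchy--Riemann equations into the flux constraints $\int_{y_{2i-1}}^{y_{2i}}\partial_n\Im h\,\ud z = 0$ for each interior slit ($2\le i\le N-1$). These constraints force $\Im h$ to take \emph{nonzero} constant values on the interior slits, and they are the real content of the lemma. Your existence candidate $\Im h = p := \int_{\phi(c)}^{\phi(d)}P_{\phi(\Omega)}(\cdot,w)\,\ud w$ vanishes on those slits, but the actual $\Im h$ does not: since $p\ge 0$ and $p=0$ on an interior slit, Hopf's lemma gives $\partial_n p>0$ there, so the flux of $p$ through the slit is strictly positive, the harmonic conjugate of $p$ jumps as one traverses the slit, and condition (2) fails for the resulting $h$. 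Note also that the issue is not single-valuedness of the conjugate --- $\phi(\Omega)$ is simply connected, so the conjugate exists automatically, contrary to the period worry you raise --- the obstruction is the equality of the \emph{distinct} constants the conjugate takes on the disconnected pieces of the right side of the slit rectangle, and that is precisely the flux condition. The paper's construction is entirely different and explicit: $h = U(\cdot,\varphi(d);\by) - U(\cdot,\varphi(c);\by)$ with $U$ as in~\eqref{eqn::aux1}, whose correction term $-\tfrac{K_{N,1}(r;\by)}{K(\by)}f(z;\by)$ is engineered exactly to produce the correct slit constants and the common value of $\Re h$.

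Uniqueness has the parallel gap. The claim that a maximum-principle argument ``analogous to the one in Lemma~\ref{lem::Pois} shows $\Im L$ is constant'' is false as stated: a bounded harmonic function in $\HH$ with Neumann data on some arcs and locally-constant (but possibly different) Dirichlet values on the rest is not constant in general --- your $p$ above is such a function and is not constant. In Lemma~\ref{lem::Pois} the difference of two solutions vanishes on \emph{all} Dirichlet arcs, so reflection plus the maximum principle finishes; here $\Im L$ is known to vanish only on $(y_1,y_2)\cup(y_{2N-1},y_{2N})$ and is some unknown constant on each interior slit. The paper closes this using the flux constraints derived from condition (2): if $\Im L$ attained its maximum on an interior slit, then $\partial_n\Im L\le 0$ throughout that slit, and combined with $\int_{y_{2i-1}}^{y_{2i}}\partial_n\Im L = 0$ this forces $\partial_n\Im L\equiv 0$ there, so (after even reflection in $\R$) $\Im L$ extends harmonically across that slit and the maximum cannot be attained on it. Hence the maximum and minimum lie on $(y_1,y_2)\cup(y_{2N-1},y_{2N})$, where $\Im L = 0$, giving $\Im L\equiv 0$. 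Without isolating and using these flux constraints the argument does not close.
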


Assuming this is true, we obtain the limit of $\lambda^{\delta}_{c^{\delta}, d^{\delta}}$. 

\begin{lemma}\label{lem::convobservable}
Let $h$ be the holomorphic function in Lemma~\ref{lem::probuniqueness}. 
The sequence of discrete harmonic functions $\lambda_{c^\delta, d^\delta}^\delta$ converges to $\Im h$ locally uniformly as $\delta\to 0$.
\end{lemma}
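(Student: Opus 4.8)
The plan is to establish the convergence of $\lambda_{c^\delta,d^\delta}^\delta$ in two stages, following the standard architecture for convergence of discrete harmonic functions to their continuous counterparts. First I would show that the family $\{\lambda_{c^\delta,d^\delta}^\delta\}_{\delta>0}$ is precompact: being a hitting probability, it is uniformly bounded in $[0,1]$, and it is discrete harmonic on $\Omega^\delta\setminus\partial\Omega^\delta$, so standard equicontinuity estimates for bounded discrete harmonic functions (as used already in Lemma~\ref{lem::harm}) give that any subsequence has a further subsequence converging locally uniformly to some harmonic function $u$ on $\Omega$. The core of the argument is then to identify every such subsequential limit $u$ with $\Im h$; once that is done, the whole sequence converges.

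To identify the limit, I would pass the discrete boundary conditions of $\lambda_{c^\delta,d^\delta}^\delta$ to the limit. On the arcs $(x_1^\delta x_2^\delta)$ and $(x_{2N-1}^\delta x_{2N}^\delta)$, which are collapsed to single vertices of $\tilde\Omega^\delta$, the function $\lambda_{c^\delta,d^\delta}^\delta$ is constant, equal to $0$ on $(x_1^\delta x_2^\delta)$ and to some constant $p^\delta$ on $(x_{2N-1}^\delta x_{2N}^\delta)$; likewise it is constant on each wired arc $(x_{2i-1}^\delta x_{2i}^\delta)$. Along the free arcs $(x_{2i}^\delta x_{2i+1}^\delta)$ the random walk reflects, giving a discrete Neumann (zero normal derivative) condition. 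Using the discrete Beurling estimate to control behavior near the marked points and near $c^\delta,d^\delta$, the limit $u$ must be harmonic in $\Omega$, constant on each wired arc, with zero normal derivative on each free arc, equal to $0$ on $(x_1x_2)$, and — because the walk hits $(cd)$ versus its complement in $(x_1x_2)\cup(x_{2N-1}x_{2N})$ with boundary data $1$ on $(cd)$ and $0$ elsewhere on that union — equal to $1$ on $(cd)$ and to some constant $p=\lim p^\delta$ on the rest of $(x_{2N-1}x_{2N})$, with $u$ bounded and continuous away from $c,d$. I would then argue that the harmonic conjugate $v$ of $u$ (constructed on $\Omega$, or more conveniently on $\phi(\Omega)$ where the free arcs become horizontal slits and the wired arcs the horizontal sides) exists: the zero-Neumann condition for $u$ on the free arcs is exactly the condition that $v$ be constant there, and the constancy of $u$ on the wired arcs makes $v$ have the right behavior on the remaining boundary. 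Setting $h:=\varphi^{-1}$-pullback issues aside, $h := v+\ii u$ transported to $\HH$ via $\varphi$ is holomorphic and, after checking each item, satisfies exactly the five conditions of Lemma~\ref{lem::probuniqueness}; its uniqueness forces $u=\Im h$, independent of the subsequence. The value of the constant $p$ is pinned down automatically by the requirement that such an $h$ exist (equivalently, that $v$ be single-valued), just as the constant $M$ was pinned to $1/K$ in Lemma~\ref{lem::harm}.

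One technical point that needs care is the boundedness and precompactness argument near the corners $x_1,\dots,x_{2N}$ and near the mixed-condition points $c,d$, where $u$ and $h$ are genuinely singular (in fact $\Im h$ jumps across $c$ and $d$). Here I would invoke the discrete Beurling estimate to show that the discrete harmonic measure of a small arc around any of these points, seen from a fixed interior point, is small uniformly in $\delta$, so that no mass escapes to the boundary in the limit and the limit $u$ has the claimed boundary values in the appropriate (non-tangential, or $L^1$) sense; this is the analogue of what was done via~\cite[Corollary~3.8]{ChelkakWanMassiveLERW} and the discrete Beurling estimate in the proof of Proposition~\ref{prop::startingpoint}. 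I expect this boundary-regularity bookkeeping — rather than the convergence itself — to be the main obstacle, because the mixed Dirichlet/Neumann boundary conditions combined with the corner and jump singularities require the limit to be taken in a weak enough topology near $\partial\Omega$ while still being strong enough to identify $u$ uniquely with $\Im h$; passing to $\phi(\Omega)$, where the geometry is a slit rectangle and the free/wired dichotomy becomes the clean Neumann-on-slits / Dirichlet-or-constant-on-sides picture, makes this identification as transparent as possible.
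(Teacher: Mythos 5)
Your proposal follows the same architecture as the paper, which simply reruns the argument of Lemma~\ref{lem::harm} with the uniqueness input replaced by Lemma~\ref{lem::probuniqueness}: precompactness of the bounded discrete harmonic functions, convergence of the pair $r^\delta+\ii\lambda^\delta_{c^\delta,d^\delta}$ (with $r^\delta$ the discrete harmonic conjugate) after controlling its value $M^\delta$ on the free arcs, passage of the boundary conditions via discrete Beurling estimates, and identification of the subsequential limit by the uniqueness part of Lemma~\ref{lem::probuniqueness}. One small slip worth flagging: the limiting boundary value on $(x_{2N-1}\,c)\cup(d\,x_{2N})$ is $0$, not an undetermined constant $p$; it is prescribed (just like the $0$ on $(x_1x_2)$) because a walk absorbed on $(x_{2N-1}^\delta x_{2N}^\delta)$ outside $(c^\delta d^\delta)$ has by definition failed the event, and this matches condition (4) of Lemma~\ref{lem::probuniqueness}. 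The constants that genuinely are undetermined a priori and get pinned down by uniqueness are the values of $\Im h$ on the middle wired arcs $(x_{2i-1}x_{2i})$ for $2\le i\le N-1$ and the common value of $\Re h$ on the free arcs --- the analogue of $M^\delta$ in Lemma~\ref{lem::harm}, whose uniform boundedness in $\delta$ still requires the same contradiction/rescaling argument, which your write-up gestures at but does not spell out.
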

\begin{proof}
This can be proved by the same argument as in the proof of Lemma~\ref{lem::harm} where we need to replace Lemma~\ref{lem::uniformizingcm} by Lemma~\ref{lem::probuniqueness}. 
\end{proof}

Consider the random walk on $\tilde\Omega^\delta$ which starts from $(x^\delta_1x^\delta_2)$ and ends at $(x^\delta_{2N-1}x^\delta_{2N})$. Denote by $\tilde\gamma_\delta$ the loop-erased path obtained from this random walk, as described at the beginning of Section~\ref{subsec::startingpoint}. Define 
\begin{equation}\label{eqn::def_twoends}
\LX_\delta:=\tilde\gamma_\delta\cap(x^\delta_1x^\delta_2), 
\quad
\LY_\delta:=\tilde\gamma_\delta\cap(x^\delta_{2N-1}x^\delta_{2N}).
\end{equation}
We will derive the joint distribution of $(\LX_{\delta}, \LY_{\delta})$. Recall that $K(\by)$ is the height of the rectangle $f(\HH)$.
Let $h$ be the holomorphic function in Lemma~\ref{lem::probuniqueness}. 
For $x\in (y_{2N-1},y_{2N})$, the following limit exists:  
\begin{equation}\label{def::PH}
P_{\HH}(z,x;\by):=K(\by)\lim_{s,t\to x}\frac{\Im h(z, t, s;\by)}{s-t}.
\end{equation}

The limiting joint distribution of $(\LX_{\delta}, \LY_{\delta})$ is given by $P_{\HH}$. 

\begin{proposition}\label{prop::jointdis}
We have
\[\lim_{\delta\to 0}\PP[\LX_\delta\in (a^\delta b^\delta), \LY_\delta\in (c^\delta d^\delta)]=\int_{\varphi(c)}^{\varphi(d)}\ud x\int_{\varphi(a)}^{\varphi(b)}\ud w\;\partial_n P_{\HH}(z,x;\by)|_{z=w},\]
where $P_{\HH}$ is defined in~\eqref{def::PH}. 
\end{proposition}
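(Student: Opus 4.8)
The plan is to compute the joint hitting probability $\PP[\LX_\delta\in(a^\delta b^\delta),\LY_\delta\in(c^\delta d^\delta)]$ by the same circle of ideas used in the proof of Proposition~\ref{prop::startingpoint}, now with \emph{both} endpoints of the loop-erased path constrained. First I would record, via Wilson's algorithm and the reversibility of simple random walk, the identity
\[
\PP[\LX_\delta=a^\delta,\,\LY_\delta=c^\delta]
=\sum_{\substack{v^\delta\sim a^\delta\\ v^\delta\notin\partial\Omega^\delta}}\PP^{v^\delta}\bigl[\text{SRW hits }(x_1^\delta x_2^\delta)\cup(x_{2N-1}^\delta x_{2N}^\delta)\text{ at }c^\delta\bigr],
\]
so that summing over $a^\delta\in(a^\delta b^\delta)$ and $c^\delta\in(c^\delta d^\delta)$ gives
\[
\PP[\LX_\delta\in(a^\delta b^\delta),\,\LY_\delta\in(c^\delta d^\delta)]
=\sum_{\substack{v^\delta\sim (a^\delta b^\delta)\\ v^\delta\notin\partial\Omega^\delta}}\lambda^\delta_{c^\delta,d^\delta}(v^\delta).
\]
Then I would pass to the limit using Lemma~\ref{lem::convobservable}, which identifies the limit of $\lambda^\delta_{c^\delta,d^\delta}$ as $\Im h(\cdot,\varphi(c),\varphi(d);\by)$. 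This reduces the problem to understanding the boundary behaviour of $\Im h$ near the arc $(x_1x_2)$ and near the points $c,d$.

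The key passage is a second-order boundary estimate, analogous to~\eqref{eqn::4}--\eqref{eqn::6} in the proof of Proposition~\ref{prop::startingpoint}, but carried out twice — once in the $w$-variable and once jointly in the $(c,d)$-pair. Concretely, I would introduce divisions $\{w_0=a,\dots\}$ of $(ab)$ and divisions of $(cd)$, and for each cell compare $\lambda^\delta_{c^\delta,d^\delta}$ against the discrete harmonic function $h^\delta_{j,k}$ that is the probability the SRW exits $\partial\Omega^\delta$ at a single small cell; by the same argument as Lemma~\ref{lem::harm} (replacing Lemma~\ref{lem::uniformizingcm} by Lemma~\ref{lem::probuniqueness}) this converges to the corresponding harmonic function in $\Omega$. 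Invoking \cite[Corollary~3.8]{ChelkakWanMassiveLERW} as in~\eqref{eqn::4}, I would extract the normal derivative: as the mesh of the $(ab)$-division shrinks, $\sum_{v^\delta\sim(v^\delta_k v^\delta_{k+1})}\lambda^\delta_{c^\delta,d^\delta}(v^\delta)$ is comparable to $(\phi(v_{k+1})-\phi(v_k))\,\partial_n\Im h(w,\varphi(c),\varphi(d);\by)|_{w=\varphi(v_k)}$ up to factors $(1\pm\epsilon)$, and summing over $k$ gives, in the limit,
\[
\lim_{\delta\to 0}\PP[\LX_\delta\in(a^\delta b^\delta),\,\LY_\delta\in(c^\delta d^\delta)]
=\int_{\varphi(a)}^{\varphi(b)}\partial_n\Im h(w,\varphi(c),\varphi(d);\by)\,\ud w.
\]

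Finally I would take the limit in the $(c,d)$-variables. Here the definition~\eqref{def::PH} of $P_{\HH}$ enters: applying the same division/normal-derivative argument to the pair $(cd)$ — equivalently, using that $\Im h$ depends linearly (to first order) on the length of a small interval $(\varphi(c),\varphi(d))$ with density exactly $K(\by)^{-1}P_{\HH}(z,x;\by)$ by~\eqref{def::PH} — converts $\partial_n\Im h(w,\varphi(c),\varphi(d);\by)$ into $\int_{\varphi(c)}^{\varphi(d)}\partial_n P_{\HH}(z,x;\by)|_{z=w}\,\ud x$ after absorbing the constant $K(\by)$, which the normalization in~\eqref{def::PH} is designed to cancel against the $\tfrac1K$ appearing in Lemma~\ref{lem::harm}. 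Combining the two integrations yields the claimed double integral. The main obstacle I anticipate is justifying the interchange of the two limiting procedures (mesh $\delta\to0$, division size $\to0$, and $c^\delta\to c$, $d^\delta\to d$) uniformly enough that the error factors $(1\pm\epsilon)^{\text{const}}$ multiply cleanly; this requires that the convergence in Lemma~\ref{lem::convobservable} and the Chelkak--Wan boundary estimate hold uniformly over the family of nearby polygons obtained by inserting the extra marked points $a,b,c,d$, and that $\partial_n P_{\HH}(z,x;\by)$ is jointly continuous and bounded away from $0$ on the relevant boundary arcs — the positivity statement analogous to~\eqref{eqn::poissonkernel2}, which should follow from a maximum-principle comparison with a standard Poisson kernel on a sub-rectangle.
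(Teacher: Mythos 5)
Your proposal takes a genuinely different route from the paper's proof, and the difference is worth noting. The paper does \emph{not} redo the division-and-normal-derivative machinery of Proposition~\ref{prop::startingpoint}. Instead it conditions on $\LX_\delta$ and observes that the conditional probability $\PP[\LY_\delta\in(c^\delta d^\delta)\mid\LX_\delta]$ is exactly the ratio $\lambda^\delta_{c^\delta,d^\delta}(\LX_\delta)/\lambda^\delta(\LX_\delta)$ of the two discrete harmonic functions. Since this is already a ratio, the Chelkak--Wan comparison (\cite[Corollary~3.8]{ChelkakWanMassiveLERW}) is applied only once, at the single boundary point $\LX_\delta$, and then the already-proven Lemmas~\ref{lem::harm} and~\ref{lem::convobservable} pass to the limit, giving the ratio of normal derivatives $\partial_n\Im h/\partial_n\Im f$. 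Proposition~\ref{prop::startingpoint} is then reused as a black box for the marginal law of $\LX$, and the two pieces multiply (with $\PartF$ cancelling against the density of $\varphi(\LX)$). Your approach buys nothing over this and redoes a substantial amount of work.

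Beyond being more laborious, there are two concrete soft spots in your plan. First, the opening identity $\PP[\LX_\delta=a^\delta,\LY_\delta=c^\delta]=\sum_{v^\delta\sim a^\delta}\PP^{v^\delta}[\dots]$ is only a \emph{proportionality}, not an equality: as in~\eqref{eqn::3}, there is a hidden normalization $Z^\delta$ that does not cancel unless you work, as the paper does in~\eqref{eqn::startingpoint}, with ratios of two such boundary sums. This is exactly why the paper's proof of Proposition~\ref{prop::startingpoint} only ever proves~\eqref{eqn::startingpoint} rather than a direct convergence of $\sum_{v^\delta\sim(a^\delta b^\delta)}\lambda^\delta(v^\delta)$. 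Second, the estimate you assert, namely that $\sum_{v^\delta\sim(v^\delta_k v^\delta_{k+1})}\lambda^\delta_{c^\delta,d^\delta}(v^\delta)$ is comparable to $(\phi(v_{k+1})-\phi(v_k))\,\partial_n\Im h|_{\varphi(v_k)}$ up to $(1\pm\epsilon)$, cannot be right as stated: the left side is a discrete boundary sum whose magnitude depends on the lattice normalization, while the right side is a continuum quantity, so a direct comparison without a companion discrete harmonic function in the denominator (playing the role of $\tilde h^\delta_j$ and exploiting the reversibility identity~\eqref{eqn::9}) is ill-posed. If you recast your cell-by-cell estimates as ratio comparisons and carry along the reversibility step, the division approach could be made to work, but you would essentially be reproving Proposition~\ref{prop::startingpoint} with $\lambda^\delta$ replaced by $\lambda^\delta_{c^\delta,d^\delta}$, which the paper sidesteps entirely by conditioning.
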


This joint distribution will give a martingale observable which will be used to complete the proof of Proposition~\ref{prop::lawofgamma} in Section~\ref{subsec::cvgloewner}. 

\begin{proof}
Recall that $\lambda^{\delta}$ is the discrete harmonic function defined in Lemma~\ref{lem::harm}. Note that 
\[\PP\left[\LY_\delta\in(c^\delta d^\delta)\cond\LX_\delta\right]=\frac{\lambda_{c^\delta,d^\delta}^{\delta}(\LX_{\delta})}{\lambda^\delta(\LX_{\delta})}.\]
By~\cite[Corollary~3.8]{ChelkakWanMassiveLERW}, for every $\epsilon>0$, there exists $s(\epsilon)>0$, such that for every $v^\delta\in\Omega^\delta$ wuch that $d(v^\delta,\LX_\delta)<s(\epsilon)$, we have
\[(1-\epsilon)\frac{\lambda_{c^\delta,d^\delta}^{\delta}(v^\delta)}{\lambda^\delta(v^\delta)}\le\frac{\lambda_{c^\delta,d^\delta}^{\delta}(\LX_{\delta})}{\lambda^\delta(\LX_{\delta})}\le (1+\epsilon)\frac{\lambda_{c^\delta,d^\delta}^{\delta}(v^\delta)}{\lambda^\delta(v^\delta)}.\]
Choose $v\in\Omega$ such that $d(v,\LX)<s(\epsilon)$ and choose $v^\delta\in\Omega^\delta$ such that $v^\delta\to v$. Combining Lemma~\ref{lem::harm} and Lemma~\ref{lem::convobservable}, by letting $\delta\to 0$ and then letting $v\to\LX$, we have
\[\lim_{\delta\to 0}\frac{\lambda_{c^\delta,d^\delta}^{\delta}(\LX_{\delta})}{\lambda^\delta(\LX_{\delta})}=\frac{\partial_n \Im h(z,\varphi(c),\varphi(d);\by)|_{z=\varphi(\LX)}}{\frac{1}{K(\by)}\partial_n\Im f(z; \by)|_{z=\varphi(\LX)}}.\]
Combining~\eqref{def::PH}, we have
\begin{align}\label{eqn::conditionallaw}
\lim_{\delta\to 0}\E[\LY_\delta\in(c^\delta d^\delta)\cond \LX_\delta]=&\frac{K(\by) \partial_n \Im h(z,\varphi(c),\varphi(d);\by)|_{z=\varphi(\LX)}}{\partial_n\Im f(z; \by)|_{z=\varphi(\LX)}}\notag\\
=&\int_{\varphi(c)}^{\varphi(d)}\ud x\frac{\partial_n P_{\HH}(z,x;\by)|_{z=\varphi(\LX)}}{\PartF(\varphi(\LX); \by)}.
\end{align} 
Combining with Propsition~\ref{prop::startingpoint}, we complete the proof.
\end{proof}
 
\subsection{Proof of Lemma~\ref{lem::probuniqueness}}
\label{subsec::holomorphic}
Lemma~\ref{lem::probuniqueness} looks innocent, but its proof involves complicated analysis. We will complete its proof in this section. Moreover, we will derive a symmetry property of $P_{\HH}$ which will be used in Section~\ref{subsec::cvgloewner}. 

Fix $N\ge 3$ and $y_1<\cdots<y_{2N-1}<y_{2N}$ and $x\in(y_1,y_2)\cup(y_{2N-1},y_{2N})$. Consider the differentials 
\begin{equation}\label{eqn::differential_var}
\tilde\omega_{r}:=\frac{u^{r}\ud u}{\prod_{j=1}^{2N}(u-y_{j})^{1/2}(u-x)^2},\quad \text{for }0\le r\le N-2.
\end{equation}
We choose a branch for each $\tilde{\omega}_r$ simultaneously positive on~\eqref{eqn::branchchoice}. 
Let $\{\tilde{R}(i,j): 1\le i, j\le N-2\}$ be the matrix given by 
\begin{equation}\label{eqn::R_var}
\tilde{R}(i,j)=\int_{y_{2i+1}}^{y_{2i+2}} \tilde{\omega}_{j-1},\quad\text{for }1\le i,j\le N-2. 
\end{equation}
Using Vandermonde determinant, we have
\[\det\tilde R=\int_{y_{3}}^{y_{4}}\cdots\int_{y_{2N-3}}^{y_{2N-2}}\prod_{1\le i<j\le N-2}(u_j-u_i)\prod_{i=1}^{N-2}\frac{\ud u_i}{\prod_{j=1}^{2N}(u_i-y_j)^{1/2}(u_i-x)^2}\neq 0. \]
In particular, the matrix $\tilde{R}$ in~\eqref{eqn::R_var} is invertible. Define 
\begin{equation}\label{eqn::nu_var}
(\tilde{\nu}_{N-2}, \ldots, \tilde{\nu}_1)^t: =-\tilde{R}^{-1}\left(\int_{y_3}^{y_4}\tilde{\omega}_{N-2},\,\cdots,\,\int_{y_{2N-3}}^{y_{2N-2}}\tilde{\omega}_{N-2}\right)^{t}.
\end{equation}
Define the polynomial
\begin{equation}\label{eqn::polynomial_var}
\tilde{Q}(w):=w^{N-2}+\sum_{n=0}^{N-3}\tilde{\nu}_{N-2-n} w^{n}.
\end{equation}
Let $\tilde{\mu}^{(1)}, \ldots, \tilde{\mu}^{(N-2)}$ be the $N-2$ roots of $\tilde{Q}$. 
For $u\in\HH$ and $x\in(y_1,y_2)\cup(y_{2N-1},y_{2N})$, define
\begin{equation}\label{eqn::auxfunctiong}
g(u,x;\by):=\frac{\prod_{\ell=1}^{N-2} \left(u-\tilde\mu^{(\ell)}\right)}{\prod_{j=1}^{2N}(u-y_j)^{1/2}}\quad\text{ and }\quad L(x;\by):=\frac{\partial_ug(u,x;\by)|_{u=x}}{g(x,x;\by)}.
\end{equation}
We choose a branch for $g(\cdot,x;\by)$ so that it is real and positive on~\eqref{eqn::branchchoice}. 
\begin{lemma}\label{lem::aux}
 Fix $N\ge 3$ and $y_1<\cdots<y_{2N}$. 
Define, for $z\in\HH$ and $x\in (y_{2N-1}, y_{2N})$, 
\begin{equation}\label{eqn::defV}
V(z,x;\by):=\int_{y_1}^z \frac{g(u,x;\by)\ud u}{\pi g(x,x;\by)(u-x)^2},
\end{equation}
Then,  the function $V(\cdot,x;\by)$ satisfies the following boundary conditions:
\begin{enumerate}[label=(\arabic*)]
\item \label{item::aux_bc1}
$\Im V(\cdot,x;\by)$ is positive and constant on $(y_{2i-1},y_{2i})$ for $1\le i\le N-1$; 
\item \label{item::aux_bc2}
$\Re V(\cdot,x;\by)=0$ on $(y_{2N},+\infty)\cup (-\infty,y_1)$ and $\Re V(\cdot,x;\by)$ equals the same constant on $\cup_{i=1}^{N-1}(y_{2i},y_{2i+1})$;
\item \label{item::aux_bc3}
$\Im\left(V(z,x;\by)+\frac{1}{\pi}\frac{1}{z-x}\right)$ is bounded when $z\to x$. 
\item\label{item::aux_bc4}
$\Im\left(V(z,x;\by)+\frac{1}{\pi}\frac{1}{z-x}\right)$ is constant on $(y_{2N-1},x)$ and is constant on $(x, y_{2N})$. Denote these two constants by $K_{N,1}(x;\by)$ and $K_{N,2}(x;\by)$ respectively. Then, we have 
\begin{equation}\label{eqn::consrela}
K_{N,1}(x;\by)=K_{N,2}(x;\by)+L(x;\by).
\end{equation}
\end{enumerate}
\end{lemma}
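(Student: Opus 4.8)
The plan is to verify the four boundary conditions directly from the Schwarz--Christoffel-type integral representation \eqref{eqn::defV}, exploiting that the integrand $g(u,x;\by)/(\pi g(x,x;\by)(u-x)^2)$ is, up to the real positive normalization $1/(\pi g(x,x;\by))$, of exactly the same shape as the integrand \eqref{eqn::conformalmap_slitrect} defining $f$, but with the extra double pole at $u=x$. First I would analyze the argument of the integrand along each boundary interval of $\HH$. On $(y_{2N},+\infty)$ the branch of $g(\cdot,x;\by)$ has been chosen real and positive, and $(u-x)^2>0$ there, so the integrand is real; hence $\Re V$ is constant on that interval, and since $\int_{y_1}^{y_1}=0$ one checks (tracking the half-integer exponents across each $y_j$, exactly as in the analysis of $f$) that the constant is $0$ on $(-\infty,y_1)\cup(y_{2N},+\infty)$. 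Crossing each branch point $y_j$ multiplies the integrand by $\mathfrak{i}$ or $-\mathfrak{i}$, so it is alternately purely imaginary on $(y_{2i-1},y_{2i})$ and purely real on $(y_{2i},y_{2i+1})$; this gives \ref{item::aux_bc1} (constancy of $\Im V$ on the odd intervals) and the second half of \ref{item::aux_bc2} (constancy of $\Re V$ on each even interval). That the imaginary parts on the odd intervals are \emph{positive} rather than merely constant is where the choice of the roots $\tilde\mu^{(\ell)}$ enters: the defining relations \eqref{eqn::nu_var}, \eqref{eqn::polynomial_var} are precisely the conditions $\int_{y_{2i+1}}^{y_{2i+2}}\tilde\omega(u)\,\ud u=0$ for $i=1,\dots,N-2$, i.e.\ the increments of $\Re V$ across the ``middle'' even intervals vanish, which forces the slit picture and pins down the signs; I would argue positivity either by the same monotonicity/ordering argument used for $f$ in \cite[Lemma~4.9]{LiuPeltolaWuUST}, or by noting that $V$ is, by construction, (a constant multiple of) the derivative in a marked point of the conformal map onto the slit rectangle, hence sends $\HH$ into a slit rectangle with the correct orientation.

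Next I would treat the local behavior near $z=x$, which gives \ref{item::aux_bc3} and \ref{item::aux_bc4}. Write the integrand as
\begin{equation*}
\frac{g(u,x;\by)}{\pi g(x,x;\by)(u-x)^2}=\frac{1}{\pi(u-x)^2}+\frac{L(x;\by)}{\pi(u-x)}+\psi(u),
\end{equation*}
where $L(x;\by)=\partial_u g(u,x;\by)|_{u=x}/g(x,x;\by)$ is exactly the coefficient defined in \eqref{eqn::auxfunctiong} (it is the residue coming from Taylor-expanding $g(u,x;\by)/g(x,x;\by)=1+L(x;\by)(u-x)+O((u-x)^2)$), and $\psi$ is holomorphic near $x$. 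Integrating term by term,
\begin{equation*}
V(z,x;\by)=-\frac{1}{\pi}\frac{1}{z-x}+\frac{L(x;\by)}{\pi}\log(z-x)+(\text{holomorphic near }x),
\end{equation*}
so $\Im\bigl(V(z,x;\by)+\tfrac1\pi\tfrac1{z-x}\bigr)$ differs from $\tfrac{L(x;\by)}{\pi}\arg(z-x)$ by a bounded term; this is bounded on $\HH$ near $x$, giving \ref{item::aux_bc3}, and as $z\to x$ along the real axis $\arg(z-x)$ equals $\pi$ on $(y_{2N-1},x)$ and $0$ on $(x,y_{2N})$, so the boundary values on the two sides are two constants differing by $\tfrac{L(x;\by)}{\pi}\cdot\pi=L(x;\by)$. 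Care is needed with the overall additive constant: one must pick up the contribution of the principal-value part of $\int_{y_1}^{z}\frac{\ud u}{\pi(u-x)^2}$ and of $\psi$, but the \emph{difference} $K_{N,1}-K_{N,2}$ is insensitive to these, yielding \eqref{eqn::consrela}.

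The main obstacle I anticipate is the positivity assertion in \ref{item::aux_bc1} together with making rigorous the claim that $V(\cdot,x;\by)$ really is (proportional to) the $x$-derivative of a genuine conformal map onto a horizontally slit rectangle, rather than merely a function with the right boundary symmetries; this is exactly the subtlety handled for $f$ itself via \cite[Lemma~4.9]{LiuPeltolaWuUST}, and here the double pole at $x$ perturbs the periods, so I would need to check that $\tilde R$ being invertible (which was already shown above via the Vandermonde computation) suffices to carry the argument through verbatim. A secondary, more bookkeeping-type difficulty is tracking all the branch choices consistently across the $2N+1$ singular points $y_1,\dots,y_{2N},x$ so that the alternating real/imaginary pattern and the signs come out as stated; I would organize this by fixing the branch on \eqref{eqn::branchchoice} as prescribed and moving leftward, recording the factor $\pm\mathfrak{i}$ contributed by each $(u-y_j)^{1/2}$ and the factor $-1$ contributed by $(u-x)^2$ only when $x$ lies in $(y_1,y_2)$ versus $(y_{2N-1},y_{2N})$.
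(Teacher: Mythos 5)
Your treatment of items~\ref{item::aux_bc3} and~\ref{item::aux_bc4} is correct and matches the paper's argument: extract the Laurent coefficients $\frac{1}{\pi(u-x)^2}$ and $\frac{L(x;\by)}{\pi(u-x)}$ of the integrand at $u=x$, integrate to isolate $-\frac{1}{\pi(z-x)}+\frac{L(x;\by)}{\pi}\log(z-x)$ plus a term holomorphic at $x$, note that $\Im\log(z-x)=\arg(z-x)\in[0,\pi]$ is bounded on $\HH$ (giving~\ref{item::aux_bc3}), and read off the jump $L(x;\by)$ from the $\pi$-drop in $\arg(z-x)$ across $x$ (giving~\eqref{eqn::consrela}). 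The paper carries out exactly this computation, writing it as two Taylor expansions with reference points $y_{2N-1}$ and $y_{2N}$ and subtracting.

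Your discussion of items~\ref{item::aux_bc1} and~\ref{item::aux_bc2}, which the paper dismisses as ``clear from the definition,'' contains two compensating parity errors. First, you claim the integrand is real on $(y_{2N},+\infty)$ and, more generally, ``alternately purely imaginary on $(y_{2i-1},y_{2i})$ and purely real on $(y_{2i},y_{2i+1})$.'' This overlooks the normalizing factor $g(x,x;\by)$ in the denominator, which is purely \emph{imaginary}: $x$ lies in $(y_{2N-1},y_{2N})$, where exactly one square root $(u-y_j)^{1/2}$ has flipped branch relative to the reference ray $(y_{2N},+\infty)$, and the paper explicitly invokes that ``$g(\cdot,x;\by)$ is pure imaginary on $(y_{2N-1},y_{2N})$'' in its own proof of~\ref{item::aux_bc4}. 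Consequently the true parity is the opposite of what you state: the integrand is purely imaginary on $(-\infty,y_1)\cup(y_{2N},+\infty)$ and on each $(y_{2i},y_{2i+1})$, and purely real on each $(y_{2i-1},y_{2i})$. Second, your deduction ``the integrand is real; hence $\Re V$ is constant'' is backwards --- a real integrand produces a real increment of the integral, so it is $\Im V$, not $\Re V$, that stays constant. The two errors cancel, so your stated conclusions agree with the lemma, but neither step of the reasoning is right. Relatedly, the period conditions~\eqref{eqn::nu_var}--\eqref{eqn::polynomial_var} kill the $\Re V$-increments across the intervals $(y_3,y_4),\dots,(y_{2N-3},y_{2N-2})$, which are \emph{odd} gaps $(y_{2i-1},y_{2i})$, not the ``middle even intervals'' as you write; and since $(u-x)^2>0$ for every real $u\neq x$ regardless of which gap $x$ sits in, the factor $(u-x)^2$ never contributes a sign $-1$ as you suggest at the end.
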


\begin{proof}
The boundary conditions~\ref{item::aux_bc1} and~\ref{item::aux_bc2} are clear from the definition and we only need to show~\ref{item::aux_bc3} and~\ref{item::aux_bc4}. It is clear that $\Im\left(V(z,x;\by)+\frac{1}{\pi}\frac{1}{z-x}\right)$ is constant on $(y_{2N-1}, x)$ and is constant on $(x, y_{2N})$ since $g(\cdot,x;\by)$ is pure imaginary on $(y_{2N-1}, y_{2N})$. By Taylor expansion, we have
\begin{align}\label{eqn::Tay}
\Im\left(V(z,x;\by)+\frac{1}{\pi}\frac{1}{z-x}\right)
=K_{N,1}(x;\by)+\frac{L(x;\by)}{\pi}\Im\log\left(\frac{z-x}{y_{2N-1}-x}\right)+O(|z-x|). 
\end{align}
This implies the boundary condition~\ref{item::aux_bc3}. By the same computation as~\eqref{eqn::Tay}, we have
\begin{equation*}
\Im\left(V(z,x;\by)+\frac{1}{\pi}\frac{1}{z-x}\right)=K_{N,2}(x;\by)+\frac{L(x;\by)}{\pi}\Im\log\left(\frac{z-x}{y_{2N}-x}\right)+O(|z-x|). 
\end{equation*}
Combining with~\eqref{eqn::Tay}, we obtain~\eqref{eqn::consrela}. This implies the boundary condition~\ref{item::aux_bc4} and completes the proof. 
\end{proof}
\begin{lemma}
For $y_1<\cdots<y_{2N-1}<x<y_{2N}$ and $z\in\HH$, recall that $K(\by)$ denotes the height of the rectangle $f(\HH)$ and $L(x;\by)$ is defined in~\eqref{eqn::auxfunctiong} and $V(z,r;\by)$ is defined in~\eqref{eqn::defV}. Define 
\begin{equation}\label{eqn::aux1}
U(z,x;\by):=\int_{x}^{y_{2N}}\ud r\left(V(z,r;\by)-\frac{K_{N,1}(r;\by)}{K(\by)}f(z;\by)\right)\exp\left(\int_{x}^{r}L(s;\by)\ud s\right).
\end{equation}
Then, the function $U(\cdot,x;\by)$ satisfies the following boundary conditions:
\begin{enumerate}[label=(\arabic*)]
\item\label{item::1}
$\Re U=0$ on $(y_{2N},+\infty)\cup (-\infty,y_1)$; 
\item\label{item::2}
$\Re U$ equals the same constant on $\cup_{i=1}^{N-1}(y_{2i},y_{2i+1})$; 
\item\label{item::3}
$\Im U$ is constant on $(y_{2i-1},y_{2i})$ for $1\le i\le N-1$ and $\Im U=0$ on $(y_{2N-1},x)$;
\item\label{item::4}
$\Im U=1$ on $(x,y_{2N})$;
\item\label{item::5}
$\Im U$ is bounded on $\overline\HH$ and continuous on $\overline\HH\setminus\{x\}$.
\end{enumerate}
\end{lemma}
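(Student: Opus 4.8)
The plan is to verify the five boundary conditions directly from the boundary values of $V(\cdot,r;\by)$, supplied by Lemma~\ref{lem::aux}, and those of $f(\cdot;\by)$, supplied by the slit-rectangle normalization of $f$ (see~\eqref{eqn::conformalmap_slitrect} and Figure~\ref{fig::slitrectangle}): namely $\Re f=0$ on $(y_{2N},+\infty)\cup(-\infty,y_1)$, $\Re f\equiv 1$ on each $(y_{2i},y_{2i+1})$ with $1\le i\le N-1$, $\Im f$ is constant on each $(y_{2i-1},y_{2i})$ with $1\le i\le N-1$ (equal to $0$ when $i=1$), and $\Im f\equiv K(\by)$ on $(y_{2N-1},y_{2N})$. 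It is convenient to set $w_x(r):=\exp\!\big(\int_x^r L(s;\by)\,\ud s\big)$. By item~\ref{item::aux_bc4} of Lemma~\ref{lem::aux}, $L(x;\by)=K_{N,1}(x;\by)-K_{N,2}(x;\by)$ is a difference of two real constants, so $w_x$ is real and positive, $w_x(x)=1$, and $\partial_r w_x(r)=L(r;\by)\,w_x(r)$; moreover, since $L(s;\by)\sim-\tfrac{1}{2(s-y_{2N})}$ as $s\to y_{2N}$, the weight $w_x(r)$ blows up at worst like $|r-y_{2N}|^{-1/2}$ there. Rewriting~\eqref{eqn::aux1} as $U(z,x;\by)=\int_x^{y_{2N}}\big(V(z,r;\by)-\tfrac{K_{N,1}(r;\by)}{K(\by)}f(z;\by)\big)w_x(r)\,\ud r$, a first routine step is to check that this integral converges and is holomorphic in $z\in\HH$: the factor $1/g(r,r;\by)$ inside $V$ vanishes like $|r-y_{2N}|^{1/2}$ and $K_{N,1}(r;\by)$ stays bounded as $r\to y_{2N}$, so the integrand is bounded near $y_{2N}$, and holomorphicity in $z$ follows by differentiating under the integral sign. (Conceptually, $U(\cdot,x;\by)$ is the solution of the linear ODE $\partial_x U+L(x;\by)\,U=-\big(V(z,x;\by)-\tfrac{K_{N,1}(x;\by)}{K(\by)}f(z;\by)\big)$ in the variable $x$ with terminal condition $U|_{x=y_{2N}}=0$, which is where the integrating factor $w_x$ comes from.)

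Because $w_x$ is real, $\Im U=\int_x^{y_{2N}}\big(\Im V(z,r;\by)-\tfrac{K_{N,1}(r;\by)}{K(\by)}\Im f(z;\by)\big)w_x(r)\,\ud r$, and similarly for $\Re U$, so conditions~\ref{item::1}--\ref{item::3} reduce to bookkeeping. On $(y_{2N},+\infty)\cup(-\infty,y_1)$ both $\Re V(\cdot,r;\by)$ (item~\ref{item::aux_bc2} of Lemma~\ref{lem::aux}) and $\Re f(\cdot;\by)$ vanish, giving~\ref{item::1}. On $\cup_{i=1}^{N-1}(y_{2i},y_{2i+1})$ both $\Re V(\cdot,r;\by)$ (item~\ref{item::aux_bc2} again) and $\Re f(\cdot;\by)$ equal a single constant, the same on all these intervals, so $\Re U$ is a single constant there, giving~\ref{item::2}. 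On $(y_{2i-1},y_{2i})$ with $1\le i\le N-1$ both $\Im V(\cdot,r;\by)$ (item~\ref{item::aux_bc1}) and $\Im f(\cdot;\by)$ are constant, so $\Im U$ is constant there; this is the first half of~\ref{item::3}. For the second half, if $z\in(y_{2N-1},x)$ then $z<r$ for every $r\in(x,y_{2N})$, hence $z\in(y_{2N-1},r)$, so by item~\ref{item::aux_bc4} (and because $\Im\tfrac{1}{z-r}=0$ for real $z,r$) we have $\Im V(z,r;\by)=K_{N,1}(r;\by)$, while $\Im f(z;\by)=K(\by)$; thus the integrand of $\Im U$ vanishes identically and $\Im U=0$ on $(y_{2N-1},x)$.

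The crux is condition~\ref{item::4}. Fix $z_0\in(x,y_{2N})$ and let $z\to z_0$ inside $\HH$. Split $V(z,r;\by)=-\tfrac1\pi\tfrac1{z-r}+W(z,r;\by)$, where $W:=V+\tfrac1\pi\tfrac1{z-r}$ is bounded near $z=r$ (item~\ref{item::aux_bc3}) and, for real $z$, satisfies $\Im W(z,r;\by)=K_{N,1}(r;\by)$ for $z\in(y_{2N-1},r)$ and $\Im W(z,r;\by)=K_{N,2}(r;\by)$ for $z\in(r,y_{2N})$ (item~\ref{item::aux_bc4}). Then $\Im U(z,x;\by)=I_1(z)+I_2(z)$ with $I_1(z)=\int_x^{y_{2N}}\tfrac1\pi\tfrac{\Im z}{|z-r|^2}\,w_x(r)\,\ud r$ and $I_2(z)=\int_x^{y_{2N}}\big(\Im W(z,r;\by)-\tfrac{K_{N,1}(r;\by)}{K(\by)}\Im f(z;\by)\big)w_x(r)\,\ud r$. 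The kernel $\tfrac1\pi\tfrac{\Im z}{|z-r|^2}$ is an approximate identity in $r$ concentrating at $z_0$, which is interior to $(x,y_{2N})$, so $I_1(z)\to w_x(z_0)$; and by dominated convergence $\Im f(z;\by)\to K(\by)$, $\Im W(z,r;\by)\to K_{N,2}(r;\by)$ for $r\in(x,z_0)$, $\Im W(z,r;\by)\to K_{N,1}(r;\by)$ for $r\in(z_0,y_{2N})$, so, using $K_{N,2}-K_{N,1}=-L$ and $\partial_r w_x=L\,w_x$,
\[I_2(z)\ \to\ \int_x^{z_0}\big(K_{N,2}(r;\by)-K_{N,1}(r;\by)\big)w_x(r)\,\ud r=-\big(w_x(z_0)-w_x(x)\big)=1-w_x(z_0).\]
Adding, $\Im U(z,x;\by)\to w_x(z_0)+1-w_x(z_0)=1$, which is~\ref{item::4}.

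Finally, for~\ref{item::5}: the same splitting shows that near the real axis $|\Im U|$ is bounded by $\sup w_x$ times the total mass $\le1$ of the $I_1$-kernel plus a bounded remainder, and $U$ is bounded near $\infty$ since $V(\cdot,r;\by)$ and $f(\cdot;\by)$ are, so $\Im U$ is bounded on $\overline\HH$; the boundary limits computed above depend continuously on the boundary point, yielding continuity of $\Im U$ on $\overline\HH\setminus\{x\}$, with the jump from $0$ on $(y_{2N-1},x)$ to $1$ on $(x,y_{2N})$ accounted for by the portion of the $I_1$-kernel's mass that is cut off at $x$. The main obstacle I anticipate is the argument for~\ref{item::4}: cleanly isolating the singularity of $V(z,r;\by)$ at $r=z$, performing the approximate-identity computation, and justifying the exchange of the limit $z\to z_0$ with the $r$-integral in the presence of the mild singularity of $w_x$ at $y_{2N}$ (which must also be controlled to confirm that the integral defining $U$ converges there).
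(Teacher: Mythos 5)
Your treatment of conditions~(1)--(4) follows the same route as the paper's proof: read off~(1)--(3) directly from the boundary values of $V(\cdot,r;\by)$ and $f$, and prove~(4) by peeling off the Poisson-kernel singularity $-\tfrac1\pi\tfrac1{z-r}$ and then running the approximate-identity computation together with the relation $K_{N,2}-K_{N,1}=-L$ and $\partial_r w_x = L\,w_x$. Your $I_1+I_2$ bookkeeping is just a cleaner packaging of the paper's three-way $\epsilon$-split; the final identity $w_x(z_0)+(1-w_x(z_0))=1$ is identical. That part is fine.

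There is a genuine gap in your argument for condition~(5), and a related unjustified assertion earlier. You bound $|\Im U|$ near the real axis by ``$\sup w_x$ times the total mass of the $I_1$-kernel'' --- but by your own earlier estimate $w_x(r)\sim(y_{2N}-r)^{-1/2}$, so $\sup w_x=+\infty$ and the bound is vacuous. The actual danger is precisely at $y_{2N}$, where the weight $w_x$ blows up and the Poisson kernel concentrates; nothing in your argument rules out $\Im U$ blowing up there, and your ``the boundary limits computed above depend continuously on the boundary point'' does not reach $y_{2N}$, since your limit in~(4) is established only at interior points of $(x,y_{2N})$. The paper's proof of~(5) does not rely on any sup-bound: it writes $\Im U(z,x)=1+\int_x^{y_{2N}}\Im\big(\int_{y_{2N}}^z\frac{g(u,r)\,\ud u}{\pi g(r,r)(u-r)^2}-\frac{K_{N,1}(r)}{K(\by)}(f(z)-f(y_{2N}))\big)w_x(r)\,\ud r$ and shows this increment vanishes as $z\to y_{2N}$ by dominated convergence, exploiting the shrinking path of integration from $y_{2N}$ to $z$. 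You would need something of this sort. Relatedly, your justification that the defining integral for $U$ converges hinges on the claim that ``$K_{N,1}(r;\by)$ stays bounded as $r\to y_{2N}$''. That is not obvious and is stated without proof: since $K_{N,1}-K_{N,2}=L$ and $L(r)\to+\infty$ as $r\to y_{2N}$, at least one of $K_{N,1},K_{N,2}$ diverges, and ruling out divergence of $K_{N,1}$ requires an argument. Both the convergence of the $r$-integral near $y_{2N}$ and condition~(5) therefore need the more careful analysis that the paper supplies.
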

\begin{proof}
From the boundary conditions of $V$ and $f$, we see that the boundary conditions~\ref{item::1},~\ref{item::2} and~\ref{item::3} hold. It remains to show~\ref{item::4} and~\ref{item::5}.

For~\ref{item::4}, for every $w\in (x,y_{2N})$, we have
\begin{align*}
&\lim_{z\to w}\Im U(z,x;\by)\\=&\lim_{\epsilon\to 0}\lim_{z\to w}\int_{w-\epsilon}^{w+\epsilon}\ud r\Im\left(V(z,r;\by)+\frac{1}{\pi}\frac{1}{z-r}-\frac{K_{N,1}(r;\by)}{K(\by)}f(z;\by)\right)\exp\left(\int_{x}^{r}L(s;\by)\ud s\right)\\
&+\lim_{\epsilon\to 0}\lim_{z\to w}\int_{(x,w-\epsilon)\cup(w+\epsilon,y_{2N})}\ud r\Im\left(V(z,r;\by)+\frac{1}{\pi}\frac{1}{z-r}-\frac{K_{N,1}(r;\by)}{K(\by)}f(z;\by)\right)\exp\left(\int_{x}^{r}L(s;\by)\ud s\right)\\
&-\frac{1}{\pi}\lim_{z\to w}\int_{x}^{y_{2N}}\ud r\Im\frac{1}{z-r}\exp\left(\int_{x}^{r}L(s;\by)\ud s\right).
\end{align*}
Combining with the boundary conditions~\ref{item::aux_bc3} and~\ref{item::aux_bc4} in Lemma~\ref{lem::aux}, we have
\[\lim_{z\to w}\Im U(z,x;\by)=-\int_x^w\ud rL(r;\by)\exp\left(\int_{x}^{r}L(s;\by)\ud s\right)+\exp\left(\int_{x}^{w}L(s;\by)\ud s\right)=1.\]
This completes the proof of~\ref{item::4}. 

For~\ref{item::5}, from the boundary conditions of $V$ and $f$, we have that $\Im U$ is bounded on $\HH\setminus\{y_{2N}\}$ and continuous on $\HH\setminus\{x,y_{2N}\}$. It remains to show that $\Im U$ is continuous near $y_{2N}$. By the explict form of $V(z,x;\by)$ in~\eqref{eqn::defV}, we have
\begin{align*}
&\Im U(z,x;\by)\notag\\=&1+\left(\Im U(z,x;\by)-\lim_{\epsilon\to 0}\Im U(y_{2N}-\epsilon,x;\by)\right)\notag\\
=&1+\int_{x}^{y_{2N}}\ud r\Im\left(\int_{y_{2N}}^z \ud u\frac{g(u,r;\by)}{\pi g(r,r;\by)(u-r)^2}-\frac{K_{N,1}(r;\by)}{K(\by)}\left(f(z;\by)-f(y_{2N};\by)\right)\right)\exp\left(\int_{x}^{r}L(s;\by)\ud s\right).
\end{align*}
By dominated convergence theorem, we have
\[\lim_{z\to y_{2N}}\Im U(z,x;\by)=1.\]
This proves the continuity of $\Im U$ and completes the proof.
\end{proof}
\begin{proof}[Proof of Lemma~\ref{lem::probuniqueness}]
Let $U(z, x; \by)$ be defined in~\eqref{eqn::aux1}.
Consider $y_1<\cdots<y_{2N-1}<t:=\varphi(c)<s:=\varphi(d)<y_{2N}$. 
Note that 
\begin{align}\label{eqn::hitconformal}
h(z,t,s;\by):=U(z,s;\by)-U(z,t;\by)
\end{align}
satisfies all the required boundary conditions. This finishes proof of the existence part.

We still need to prove the uniqueness. Suppose $k$ is another holomorphic function satisfies these boundary conditions. Define $v:=\Im(h-k)$. It suffices to show that $v=0$. Note that 
\begin{itemize}
\item
$v$ is constant on $(y_{2i-1},y_{2i})$ for $1\le i\le N$; 
\item $v=0$ on $(y_1,y_{2})\cup(y_{2N-1},y_{2N})$; 
\item $\partial_n v=0$ on $\cup_{i=1}^{N}(y_{2i},y_{2i+1})$ for $1\le i\le N$; 
\item $\int_{y_{2i-1}}^{y_{2i}}\partial_n v(z)\ud z=0$ for $2\le i\le N-1$.
\end{itemize}
By reflection with respect to $\R$, we can extend $v$ to a continuous function on $\C$ and we still denote this extension by $v$. By the third boundary condition, $v$ is harmonic on $\C\setminus\cup_{i=1}^{N}(y_{2i-1},y_{2i})$. It suffices to show that $v$ only gets its maximum and minimum on $(y_{1},y_{2})\cup(y_{2N-1},y_{2N})$. By maximum principle, $v$ gets its maximum only on $\cup_{i=1}^{N}(y_{2i-1},y_{2i})$. For $2\le i\le N-1$, if there exists $z\in (y_{2i-1},y_{2i})$ such that $\partial_n v(z)>0$, then $v$ can not get its maximum on $(y_{2i-1},y_{2i})$. If $\partial_n v(z)\le 0$ for every $z\in(y_{2i-1},y_{2i})$, by the fourth boundary condition, we have $\partial_n v=0$ on $(y_{2i-1},y_{2i})$. This implies that $v$ is harmonic on $(y_{2i-1},y_{2i})$. Thus, $v$ can not get its maximum on $(y_{2i-1},y_{2i})$. Therefore $v$ only gets its maximum on $(y_{1},y_{2})\cup(y_{2N-1},y_{2N})$. By the same argument, $v$ only gets its minimum on $(y_{1},y_{2})\cup(y_{2N-1},y_{2N})$. This implies $v=0$ and completes the proof of uniqueness.
\end{proof}

In the following, we will derive a symmetry property of $P_{\HH}$. Recall from~\eqref{def::PH} that we have defined $P_{\HH}(z, x; \by)$ with $x\in (y_{2N-1}, y_{2N})$. To state the symmetry property, we first generalize this definition to $x\in (y_1, y_2)$.  The following lemma is the analogue of Lemma~\ref{lem::probuniqueness}. 

\begin{lemma}\label{lem::htilde}
For any $(t,s)\subset(y_{1},y_{2})$, there exists a unique holomorphic function $\tilde h$ on $\HH$, which satisfies the following boundary conditions:
\begin{enumerate}[label=(\arabic*)]
\item
$\Re \tilde{h}=0$ on $(y_{2N},+\infty)\cup (-\infty,y_1)$; 
\item  $\Re \tilde{h}$ equals the same constant on $\cup_{i=1}^{N-1}(y_{2i},y_{2i+1})$; 
\item $\Im \tilde{h}$ is constant on $(y_{2i-1},y_{2i})$ for $2\le i\le N$; 
\item $\Im \tilde{h}=0$ on $(y_{2N-1},y_{2N})\cup (y_{1},t)\cup(s,y_{2})$ and $\Im \tilde{h}=1$ on $(t,s)$;
\item
$\Im \tilde{h}$ is bounded on $\overline\HH$ and continuous on $\overline\HH\setminus\{t,s\}$.
\end{enumerate}
\end{lemma}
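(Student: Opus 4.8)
The plan is to mirror the proof of Lemma~\ref{lem::probuniqueness}, exploiting the symmetry that exchanges the arc $(y_1y_2)$ with the arc $(y_{2N-1}y_{2N})$ in the slit-rectangle picture of $f$. For the existence part, I would repeat the construction of Sections~\ref{subsec::holomorphic} starting from differentials $\tilde\omega_r$ as in~\eqref{eqn::differential_var} but now with the double pole placed at $x\in(y_1,y_2)$ instead of $x\in(y_{2N-1},y_{2N})$; the Vandermonde-determinant argument showing $\det\tilde R\ne 0$ goes through verbatim since it only used that $x$ lies outside $\cup_{i=1}^{N-2}(y_{2i+1},y_{2i+2})$. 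This produces an analogue of the function $V(\cdot,x;\by)$ in~\eqref{eqn::defV} with the same boundary behaviour as in Lemma~\ref{lem::aux}, except that now the two special sub-arcs are $(y_1,x)$ and $(x,y_2)$, with two constants $K_{1,1}(x;\by)$, $K_{1,2}(x;\by)$ satisfying $K_{1,1}(x;\by)=K_{1,2}(x;\by)+L(x;\by)$ for the correspondingly defined $L$. Then, integrating against $\exp(\int L)$ and subtracting the appropriate multiple of $f(z;\by)$ exactly as in~\eqref{eqn::aux1}, I obtain a function $\tilde U(z,x;\by)$ with $\Im\tilde U$ a bounded harmonic function vanishing on $(y_{2N-1},y_{2N})\cup(y_1,x)$, equal to $1$ on $(x,y_2)$, constant on each remaining boundary arc $(y_{2i-1},y_{2i})$, and with vanishing normal derivative on each $(y_{2i},y_{2i+1})$; the verification of the jump to $1$ and of continuity near $y_1$ is word-for-word the same computation as in the two lemmas preceding the proof of Lemma~\ref{lem::probuniqueness}, using boundary conditions~\ref{item::aux_bc3} and~\ref{item::aux_bc4} of the analogue of Lemma~\ref{lem::aux}. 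Finally, setting $\tilde h(z,t,s;\by):=\tilde U(z,s;\by)-\tilde U(z,t;\by)$ for $(t,s)\subset(y_1,y_2)$ gives a holomorphic function with all five required boundary conditions.

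For uniqueness, I would argue exactly as in the proof of Lemma~\ref{lem::probuniqueness}: if $\tilde h,\tilde k$ are two solutions, set $v:=\Im(\tilde h-\tilde k)$; then $v$ is constant on each $(y_{2i-1},y_{2i})$, vanishes on $(y_1,y_2)\cup(y_{2N-1},y_{2N})$, has $\partial_n v=0$ on $\cup_{i=1}^{N}(y_{2i},y_{2i+1})$, and $\int_{y_{2i-1}}^{y_{2i}}\partial_n v=0$ for $2\le i\le N-1$. (The flux condition on the ``new'' special arcs $(y_1,y_2)$ and $(y_{2N-1},y_{2N})$ is automatic once $v$ vanishes there, because reflecting across the real line makes $v$ harmonic across those arcs.) Extend $v$ to $\C$ by reflection across $\R$; it is then bounded and harmonic off $\cup_{i=1}^{N}(y_{2i-1},y_{2i})$. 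By the maximum principle $v$ attains its extrema only on $\cup_{i=1}^{N}(y_{2i-1},y_{2i})$; the flux arguments rule out extrema on the interior slits $(y_{2i-1},y_{2i})$ for $2\le i\le N-1$ (either some point has outward derivative of the right sign, or $\partial_n v\equiv 0$ there and $v$ is harmonic across), leaving only $(y_1,y_2)\cup(y_{2N-1},y_{2N})$, where $v=0$; hence $v\equiv 0$.

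The only genuinely new bookkeeping is the relabelling of which four boundary arcs play which role: in Lemma~\ref{lem::probuniqueness} the arc $(y_1,y_2)$ carried the homogeneous Dirichlet condition and $(y_{2N-1},y_{2N})$ carried the slit with the jump, whereas here these roles are swapped, so the polynomial $\tilde Q$ and its roots $\tilde\mu^{(\ell)}$ now depend on $x\in(y_1,y_2)$ and the auxiliary integral~\eqref{eqn::aux1} is taken from $y_1$ to $x$ rather than from $x$ to $y_{2N}$. I expect the main obstacle — though it is a mild one — to be checking that the orientation/branch choices in~\eqref{eqn::branchchoice} and in the definition of $g(\cdot,x;\by)$ remain consistent when the double pole sits to the left of all the $y_j$'s on the relevant integration arcs, so that the sign in the identity~\eqref{eqn::consrela} comes out with the correct orientation of the normal $n$; this is handled by the same Taylor-expansion computation as~\eqref{eqn::Tay}, now expanding around $x\in(y_1,y_2)$. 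Once the bookkeeping is set up, every estimate is a verbatim transcription of the arguments already given in Section~\ref{subsec::holomorphic}, so I would present the proof compactly, stating the analogues of Lemma~\ref{lem::aux} and of the intermediate lemma, and indicating that their proofs are identical after the relabelling.
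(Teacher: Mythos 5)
Your proposal is correct and follows essentially the same route as the paper: the paper likewise transports the construction of Lemma~\ref{lem::probuniqueness} to the arc $(y_1,y_2)$, defining an analogue $\tilde V$ of $V$ (with the integral based at $y_{2N}$ instead of $y_1$), an analogue $\tilde U$ of $U$ via the same $\exp\bigl(\int L\bigr)$-weighted integral and subtraction of a multiple of $f$, and then setting $\tilde h = \tilde U(\cdot,s)-\tilde U(\cdot,t)$, with uniqueness proved verbatim by the reflection/maximum-principle argument. The only difference is bookkeeping — you integrate from $y_1$ to $x$ whereas the paper integrates $\tilde U$ over $(w,y_2)$ — but either normalization yields the required boundary conditions after fixing an overall sign, so the content is the same.
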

\begin{proof}
Recall that $g,L$ are defined in~\eqref{eqn::auxfunctiong}. For $w\in (y_1, y_2)$, define 
\[\tilde{V}(z,w;\by):=\int_{y_{2N}}^z \frac{g(u,w;\by)\ud u}{\pi g(w,w;\by)(u-w)^2}.\] 
Using the same argument as in the proof of Lemma~\ref{lem::aux}, we have that $\Im\left(\tilde{V}(z,w;\by)+\frac{1}{z-w}\right)$ is constant on $(y_1,w)$. We denote this constant by $\tilde{K}_{N,1}(w;\by)$. Recall that $K(\by)$ denotes the height of the rectangle $f(\HH)$. Define 
\begin{align}\label{eqn::realhitconformal}
\tilde{U}(z,w;\by)\notag
:=\int_{w}^{y_{2}}\ud r\left(V(z,r;\by)-\frac{\tilde{K}_{N,1}(r;\by)}{K(\by)}f(z;\by)\right)\exp\left(\int_{w}^{r}L(s;\by)\ud s\right).
\end{align}
By the same argument in the proof of Lemma~\ref{lem::probuniqueness}, the following holomorphic function 
\[\tilde{h}(z,t,s;\by):=\tilde{U}(z,s;\by)-\tilde{U}(z,t;\by)\]
satisfies all the required boundary conditions.
The uniqueness can be proved by the same argument as in the proof of Lemma~\ref{lem::probuniqueness}.
\end{proof}

Let $\tilde{h}$ be the holomorphic function in Lemma~\ref{lem::htilde}. For $w\in (y_{1},y_{2})$, the following limit exists: 
\begin{equation}\label{def::PH_sym}
P_{\HH}(z,w;\by):=K(\by)\lim_{t,s\to w}\frac{\Im \tilde{h}(z,t,s;\by)}{s-t}. 
\end{equation} 

Now, we are ready to state the symmetry property of $P_{\HH}$. 
\begin{lemma}\label{lem::sym}
Let $P_{\HH}$ be the function defined in~\eqref{def::PH} and~\eqref{def::PH_sym}. 
For any $y_1<w<y_2<\ldots<y_{2N-1}<x<y_{2N}$, we have
\[\partial_n P_{\HH}(z,x;\by)|_{z=w}=\partial_n P_{\HH}(z,w;\by)|_{z=x}.\]
\end{lemma}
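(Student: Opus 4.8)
The plan is to prove the symmetry of $P_{\HH}$ by first establishing a bilinear identity relating the two harmonic observables $h$ (with slit $(t,s)\subset(y_{2N-1},y_{2N})$) and $\tilde h$ (with slit $(t',s')\subset(y_1,y_2)$) via a Green-type reciprocity (second Green identity) in $\HH$, and then differentiating twice to pass from the finite-slit quantities to the kernel limits~\eqref{def::PH} and~\eqref{def::PH_sym}. Concretely, fix $y_1<w<y_2<\cdots<y_{2N-1}<x<y_{2N}$. Let $h^{(t,s)}:=h(\cdot,t,s;\by)$ for a small slit $(t,s)\ni x$ and $\tilde h^{(t',s')}:=\tilde h(\cdot,t',s';\by)$ for a small slit $(t',s')\ni w$, and set $H:=\Im h^{(t,s)}$, $\tilde H:=\Im \tilde h^{(t',s')}$. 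Both are harmonic in $\HH$; I would apply Green's identity to $(H,\tilde H)$ on $\HH$ (after the usual truncation/exhaustion argument to handle the boundary behaviour and the point singularities — here the logarithmic-type singularities of $H$ near $\{t,s\}$ and of $\tilde H$ near $\{t',s'\}$ are integrable, so the boundary term at these points vanishes in the limit). The key cancellation comes from the mixed boundary conditions in Lemmas~\ref{lem::probuniqueness} and~\ref{lem::htilde}: $\partial_n H=0$ on $\cup_i(y_{2i}y_{2i+1})$ while $\tilde H$ is constant there, and conversely; and on each arc $(y_{2i-1}y_{2i})$ both functions are constant, so $\int \partial_n H=\int\partial_n\tilde H=0$ there by the harmonic-conjugate (winding) constraint used in the uniqueness proofs. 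Hence $\int_{\partial\HH}(H\,\partial_n\tilde H-\tilde H\,\partial_n H)=0$ collapses to a sum of contributions only over the two ``active'' arcs $(y_{2N-1}y_{2N})$ and $(y_1y_2)$.

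Second, on $(y_{2N-1}y_{2N})$ we have $H=1$ on $(t,s)$ and $0$ elsewhere, while $\tilde H\equiv 0$ there; on $(y_1y_2)$ we have $\tilde H=1$ on $(t',s')$ and $0$ elsewhere, while $H\equiv 0$ there. Plugging in, Green's identity reduces to
\begin{equation}\label{eqn::greenid_plan}
\int_t^s \partial_n\tilde H(z)\,|\ud z| \;=\; \int_{t'}^{s'} \partial_n H(z)\,|\ud z|,
\end{equation}
i.e. $\int_t^s \partial_n \Im\tilde h(z,t',s';\by)|_{z}\,\ud z = \int_{t'}^{s'}\partial_n\Im h(z,t,s;\by)|_{z}\,\ud z$. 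Now divide both sides by $(s-t)(s'-t')$ and let $t,s\to x$ and $t',s'\to w$. On the left, $\frac{1}{s-t}\int_t^s\partial_n\Im\tilde h(z,t',s';\by)\,\ud z\to \partial_n\Im\tilde h(z,t',s';\by)|_{z=x}$, and then dividing by $s'-t'$ and taking $t',s'\to w$ gives $\frac{1}{K(\by)}\partial_n P_{\HH}(z,w;\by)|_{z=x}$ by~\eqref{def::PH_sym}; symmetrically the right-hand side converges to $\frac{1}{K(\by)}\partial_n P_{\HH}(z,x;\by)|_{z=w}$ by~\eqref{def::PH}. Multiplying through by $K(\by)$ yields the claim. (One should note $\partial_n$ here is the inner normal derivative along $\R$, i.e. $\partial_y$; the interchange of the limits $t,s\to x$, $t',s'\to w$ with the normal derivative is justified because $\tilde h$ and $h$ are jointly smooth in $z$ and in the slit endpoints away from the diagonal, which is exactly the content built into the explicit formulas~\eqref{eqn::aux1} and~\eqref{eqn::realhitconformal} together with Lemma~\ref{lem::aux}.)

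The main obstacle is the rigorous justification of Green's identity~\eqref{eqn::greenid_plan} in this mixed-boundary, multiply-slit, unbounded setting: one must control the behaviour of $H,\tilde H$ and their normal derivatives near the marked points $y_1,\dots,y_{2N}$ (where the boundary condition type switches and $\partial_n$ may blow up like an inverse square root) and near $\infty$, and show all these contributions vanish under the exhaustion. I would handle the switching points by the standard Schwarz-reflection/doubling argument already invoked in the uniqueness parts of Lemmas~\ref{lem::probuniqueness} and~\ref{lem::htilde}: after reflecting across $\R$, $H$ and $\tilde H$ extend to functions harmonic off the union of the ``Dirichlet'' slits, with at worst integrable singularities at the $y_j$, so the boundary integrals are absolutely convergent and the truncation error tends to $0$. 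Near $\infty$, the conditions $\Re h=0$ on $(y_{2N},\infty)\cup(-\infty,y_1)$ together with boundedness of $\Im h$ force $h$ (hence $H$) to decay, and likewise for $\tilde h$, so the contribution of a large semicircle vanishes. The remaining work is the bookkeeping of constants on the arcs $(y_{2i-1}y_{2i})$ and $(y_{2i}y_{2i+1})$, which is precisely where the vanishing-flux conditions $\int_{y_{2i-1}}^{y_{2i}}\partial_n v=0$ from the uniqueness proof get reused.
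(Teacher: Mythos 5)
Your proposal is correct, but it takes a genuinely different route from the paper. The paper's proof of Lemma~\ref{lem::sym} is a single line: it observes that, by Proposition~\ref{prop::jointdis}, $\partial_n P_{\HH}(z,x;\by)|_{z=w}$ is the limiting joint density of the two endpoints $(\LX_\delta,\LY_\delta)$ of the LERW branch, and the discrete loop-erased walk from $(x_1^\delta x_2^\delta)$ to $(x_{2N-1}^\delta x_{2N}^\delta)$ has the same law (as an unoriented path, hence the same joint endpoint law) as the one run in the reverse direction; the symmetry of the joint density then forces the kernel identity. In other words, the paper imports the symmetry from a well-known \emph{discrete probabilistic} fact (reversibility of LERW) plus the already-established convergence. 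Your proof, by contrast, is a purely \emph{analytic} reciprocity argument: you apply Green's second identity to the two harmonic observables $\Im h$ and $\Im\tilde h$ on $\HH$, use the mixed boundary conditions to kill all contributions except on $(y_1y_2)$ and $(y_{2N-1}y_{2N})$, exploit the zero-flux constraints $\int_{y_{2i-1}}^{y_{2i}}\partial_n(\cdot)=0$ on the interior slits (which follow from Cauchy--Riemann and continuity of $\Re h,\Re\tilde h$ across $y_{2i-1},y_{2i}$), and then pass to the diagonal limit to obtain $P_{\HH}$. The two arguments are really dual shadows of the same phenomenon --- Green-function/Poisson-kernel reciprocity \emph{is} the continuum analogue of random-walk reversibility, and indeed the paper already uses that analytic symmetry in Lemma~\ref{lem::prop}~\eqref{eqn::poissonkernel1} --- but they sit at different levels. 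What the paper's route buys is brevity: once Proposition~\ref{prop::jointdis} is in hand, the symmetry is free, with no need to justify an exhaustion of Green's identity around the $2N$ switching points of boundary-condition type or at $\infty$. What your route buys is self-containedness: it proves the symmetry directly from the construction of $h$, $\tilde h$ in Lemmas~\ref{lem::probuniqueness} and~\ref{lem::htilde} without invoking the discrete picture at all, and would still work if one wanted to develop the continuum theory independently. The only slightly loose point in your write-up is the blanket statement that ``on each arc $(y_{2i-1}y_{2i})$ both functions are constant''; that is only true for $2\le i\le N-1$, while on $(y_1y_2)$ it is $H$ that vanishes and $\tilde H$ that carries the step, and on $(y_{2N-1}y_{2N})$ the roles are swapped --- which you do in fact handle correctly in the next sentence, so it is merely a phrasing issue.
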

\begin{proof}
This is from the reversibility of loop-erased random walk and Proposition~\ref{prop::jointdis}.
\end{proof}

\subsection{Proof of Proposition~\ref{prop::lawofgamma}}
\label{subsec::cvgloewner}

Let $P_{\HH}$ be the function defined in~\eqref{def::PH} and~\eqref{def::PH_sym}. 
For $y_1<w<y_2<\cdots<y_{2N-1}<x<y_{2N}$, define 
\begin{equation}\label{eqn::observable}
F(w, x;\by):=\frac{\partial_n P_{\HH}(z,w;\by)|_{z=x}}{\PartF(w; \by)}.
\end{equation}
By Lemma~\ref{lem::sym}, we also have
\begin{equation}\label{eqn::observablesymmetry}
F(w, x;\by)=\frac{\partial_n P_{\HH}(z,x;\by)|_{z=w}}{\PartF(w; \by)}.
\end{equation}

Recall that $\varphi$ is a fixed conformal map from $\Omega$ onto $\HH$ and we denote $y_j=\varphi(x_j)$ for $1\le j\le 2N$. We also fix conformal maps $\varphi_{\delta}: \Omega^{\delta}\to\HH$ such that $\varphi_{\delta}^{-1}$ converges to $\varphi^{-1}$ locally uniformly and that $\varphi_{\delta}(x_j^{\delta})\to\varphi(x_j)$ for $1\le j\le 2N$. From Lemma~\ref{lem::tight}, the family $\{\gamma_{\delta}\}_{\delta>0}$ is tight. Let $\gamma$ be any subsequential limit. To simplify the notation, we still denote the convergent subsequence by $\{\gamma_{\delta}\}_{\delta>0}$. We couple $\{\gamma_{\delta}\}_{\delta>0}$ and $\gamma$ together so that $\gamma_{\delta}\to\gamma$ almost surely. 
The proof of Proposition~\ref{prop::lawofgamma} follows the strategy in~\cite{HanLiuWuUST}: We first show that the function $F$ in~\eqref{eqn::observable} gives a martingale observable for $\gamma$, see Lemma~\ref{lem::mart_observable}; then we show that $F$ satisfies a certain PDE, see Lemma~\ref{lem::observable_PDE}. 
With these two lemmas at hand, we solve the driving function of $\gamma$ from the martingale observable and complete the proof of Proposition~\ref{prop::lawofgamma}. In the proof of Proposition~\ref{prop::lawofgamma}, we still need a technical lemma: Lemma~\ref{lem::aux2}.

We parameterize $\gamma$ such that $\varphi(\gamma)$ is parameterized by its half-plane capacity and we parameterize $\gamma_\delta$ similarly. 
For the continuous curve $\varphi(\gamma)$, we denote by $(W_t, t\ge 0)$ its driving function and by $(g_t, t\ge 0)$ the corresponding conformal maps. 

\begin{lemma}\label{lem::mart_observable}
For any $x\in(y_{2N-1},y_{2N})$, the process 
\[\left(g_t'(x)F(W_t, g_t(x);g_t(y_1),\ldots,g_t(y_{2N})),\, t\ge 0\right)\] 
is a martingale up to the first time that $\gamma$ hits $\cup_{i=2}^{N}(x_{2i-1}x_{2i})$. 
\end{lemma}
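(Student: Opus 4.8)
The plan is to establish the martingale property at the discrete level first and then pass to the limit. Fix $x \in (y_{2N-1}, y_{2N})$ together with an approximating pair $c^\delta, d^\delta$ on $(x_{2N-1}^\delta x_{2N}^\delta)$, and recall from Proposition~\ref{prop::jointdis} and~\eqref{eqn::conditionallaw} that the ratio $\lambda^\delta_{c^\delta,d^\delta}(v^\delta)/\lambda^\delta(v^\delta)$ converges, after normalization, to a quantity built from $\partial_n P_{\HH}$ and $\PartF$. The key observation is that the discrete object $M_k^\delta := \PP[\LY_\delta \in (c^\delta d^\delta) \mid \gamma_\delta[0,k]]$ is, by the tower property, a martingale with respect to the filtration generated by the LERW branch $\gamma_\delta$ explored step by step. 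This is because $\tilde\gamma_\delta$ can be sampled via Wilson's algorithm as a loop-erased walk, and conditioning on an initial segment of the branch leaves a well-defined conditional law for the remaining portion, so $\LY_\delta \in (c^\delta d^\delta)$ is a fixed event whose conditional probability is automatically a martingale.

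Next I would identify $M_k^\delta$ explicitly. After exploring $\gamma_\delta[0,k]$, the remaining branch is (again by Wilson / the domain Markov property of LERW) a LERW branch in the slitted domain $\Omega^\delta \setminus \gamma_\delta[0,k]$ with the same boundary conditions, started from the tip. So $M_k^\delta$ equals the analogue of the expression in~\eqref{eqn::conditionallaw} but computed in the slit domain: a hitting probability of the simple random walk on the corresponding $\tilde\Omega^\delta$-graph, which is the discrete harmonic function $\lambda^\delta_{c^\delta,d^\delta}$ evaluated at the tip, divided by $\lambda^\delta$ at the tip. Passing $\delta \to 0$ along the coupled subsequence, using Lemma~\ref{lem::harm}, Lemma~\ref{lem::convobservable} and~\cite[Corollary~3.8]{ChelkakWanMassiveLERW} as in the proof of Proposition~\ref{prop::jointdis}, but now in the slit domain $\Omega \setminus \gamma[0,t]$, the limit of $M^\delta$ becomes (up to the constant $K(\by)$ absorbed in $P_{\HH}$ and the integral over $(\varphi(c),\varphi(d))$) the quantity
\[
\int_{\varphi(c)}^{\varphi(d)} \frac{\partial_n P_{\HH}\big(z, x; \by\big)\big|_{z = W_t}}{\PartF(W_t; \by)}\Big/\text{(Jacobian factors)}\,\ud x,
\]
evaluated in the uniformized coordinates of the slit domain. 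Conformal covariance of $P_{\HH}$, $\PartF$, and the normal derivative under the map $g_t$ (which sends $\HH \setminus \varphi(\gamma[0,t])$ back to $\HH$, carrying $W_t$ to itself, $g_t(y_j)$ to the images of the marked points, and $g_t(x)$ to the image of $x$) turns this into $g_t'(x) F(W_t, g_t(x); g_t(y_1), \ldots, g_t(y_{2N}))$ times a factor depending only on $(c,d)$ and $x$; shrinking $(c,d)$ to $x$ (dividing by $\varphi(d)-\varphi(c)$ as in~\eqref{def::PH}) extracts precisely the claimed process. Since each $M^\delta$ is a bounded martingale for $\gamma_\delta$ and the convergence is uniform on compact time intervals (before $\gamma$ gets close to $\cup_{i=2}^N (x_{2i-1}x_{2i})$), the limit is a martingale for $\gamma$.

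The main obstacle I anticipate is the conformal covariance bookkeeping together with controlling the limit uniformly near the tip. One must verify that $\partial_n P_{\HH}(z,x;\by)$ and $\PartF(w;\by)$ transform under a conformal map $g$ of $\HH$ (fixing the relevant boundary points) with the correct conformal weights — weight $1$ at the bulk-side point $x$ and weight $1$ at $w$ with a derivative of $g$ at $x$ appearing — so that the ratio $F$ picks up exactly $g_t'(x)$; this requires going back to the definitions~\eqref{def::PH}, \eqref{eqn::partitionfunction} and checking how the defining boundary-value problems pull back. The second delicate point is that Lemma~\ref{lem::tight} guarantees $\gamma$ touches $\partial\Omega$ only at its endpoints, so for $t$ bounded away from the swallowing time the slit domain $\Omega \setminus \gamma[0,t]$ is a nice Jordan domain and~\cite[Corollary~3.8]{ChelkakWanMassiveLERW} applies uniformly — but one must argue this uniformity carefully, e.g. by working on the event that $\gamma[0,t]$ stays at definite distance from the arcs $(x_{2i-1}x_{2i})$, $2 \le i \le N$, and invoking the already-established convergence of discrete harmonic measures in slit domains, exactly as in~\cite[Section~4]{HanLiuWuUST}.
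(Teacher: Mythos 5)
Your proposal takes essentially the same route as the paper. Both define the discrete martingale $M_t^\delta := \PP[\LY_\delta \in (c^\delta d^\delta) \mid \gamma_\delta[0,t]]$ (martingale by the tower property / domain Markov property of LERW), pass to the limit via the convergence of discrete harmonic observables in the slit domain (Lemma~\ref{lem::harm}, Lemma~\ref{lem::convobservable}, \cite[Corollary~3.8]{ChelkakWanMassiveLERW}), and then shrink $(c,d)$ to $x$ to extract the integrand. The paper absorbs the $g_t'(x)$ Jacobian directly by writing $M_t$ as an integral over the fixed interval $(\varphi(c),\varphi(d))$ after change of variables and invokes the symmetry relation~\eqref{eqn::observablesymmetry} to recast the integrand as $F$; you phrase the same point as conformal covariance of $\partial_n P_\HH$ and $\PartF$, which is equivalent. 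The technical points you flag (uniformity of the harmonic-function convergence in the slit domain, working away from the arcs $(x_{2i-1}x_{2i})$ for $2\le i\le N$) are precisely what the paper defers to~\cite[Lemma~5.10]{HanLiuWuUST}, so your account is in line with the paper's level of detail.
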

\begin{proof}
Recall that $\tilde{\gamma}_{\delta}$ and $\LY_{\delta}$ are defined in~\eqref{eqn::def_twoends}. Recall that $\gamma_\delta$ equals $\tilde{\gamma}_\delta$ before the first time they hit $\cup_{i=2}^{N}(x_{2i-1}^\delta x_{2i}^\delta)$. For $(cd)\subset(x_{2N-1}x_{2N})$, choose the discrete approximation $(c^\delta d^\delta)\subset (x_{2N-1}^\delta x_{2N}^\delta)$ such that $(c^\delta d^\delta)$ converges to $(cd)$ in metric~\eqref{eqn::curves_metric}. Define
\[M^\delta_t:=\E\left[\LY_\delta\in(c^\delta d^\delta)\cond \gamma_\delta[0,t]\right].\]
By~\eqref{eqn::conditionallaw}, we have
\[M_0^\delta\to M_0:=\int_{\varphi(c)}^{\varphi(d)}\ud x\frac{\partial_n P_{\HH}(z, x;\by)|_{z=\varphi(\LX)}}{\PartF(\varphi(\LX); \by)}.\]
Since $\gamma_\delta$ converges to $\gamma$ in metric~\eqref{eqn::curves_metric}, by the Markov property of $\gamma_\delta$, we have
\[
M^\delta_t\to M_t:=\int_{g_t(\varphi(c))}^{g_t(\varphi(d))}\ud x\frac{\partial_n P_{\HH}(z,x;g_t(y_1),\ldots,g_t(y_{2N}))|_{z=W_t}}{\PartF(W_t;g_t(y_1),\cdots,g_t(y_{2N}))}.
\]
Since $M_t^\delta$ is a discrete martingale up to the first time that $\gamma_\delta$ hits $\cup_{i=2}^{N}(x^\delta_{2i-1}x^\delta_{2i})$, the process $(M_t, t\ge 0)$ is a continuous martingale up to the first time that $\gamma$ hits $\cup_{i=2}^{N}(x_{2i-1}x_{2i})$. See more details in~\cite[Lemma~5.10]{HanLiuWuUST}. This implies that the process
\[\frac{g'_t(x)\partial_n P_{\HH}(z,g_t(x);g_t(y_1),\ldots,g_t(y_{2N}))|_{z=W_t}}{\PartF(W_t;g_t(y_1),\cdots,g_t(y_{2N}))}\] 
is a continuous martingale for all $x\in (y_{2N-1}, y_{2N})$ up to the hitting time. Combining with~\eqref{eqn::observablesymmetry}, the process
$F(W_t, g_t(x);g_t(y_1),\ldots,g_t(y_{2N}))$ is a continuous martingale for all $x\in (y_{2N-1}, y_{2N})$ up to the hitting time as desired.
\end{proof}
\begin{lemma}\label{lem::observable_PDE}
For $y_1<w<y_2<\ldots<y_{2N-1}<x<y_{2N}$, we have
\begin{equation}\label{eqn::Poisson_partialn_PDE}
\left(\sum_{i=1}^{2N}\frac{2}{y_i-w}\partial_{y_i}+2\frac{\partial_w\PartF(w; \by)}{\PartF(w; \by)}\partial_w+\partial_w^2+\frac{2}{x-w}\partial_x+\frac{-2}{(x-w)^2}\right)F=0.
\end{equation}
\end{lemma}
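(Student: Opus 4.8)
The PDE~\eqref{eqn::Poisson_partialn_PDE} is exactly the condition that the Itô drift of the would‑be martingale $t\mapsto g_t'(x)\,F\big(W_t,g_t(x);g_t(y_1),\ldots,g_t(y_{2N})\big)$ vanishes when $(W_t)$ is an $\SLE_2$ driving function with drift $2\,\partial_w\log\PartF$: the terms encode, in order, the Loewner transport of the force points $y_i$, the $\partial_w$‑drift together with the Brownian part of $W_t$ (hence the coefficient $1=\kappa/2$ in front of $\partial_w^2$), and the Loewner transport of $x$ together with the conformal weight $h=1$ carried by the capacity factor $g_t'(x)$. Since we want~\eqref{eqn::Poisson_partialn_PDE} as an \emph{input}---it is precisely what will force $\kappa=2$ and identify the drift in Section~\ref{subsec::cvgloewner}, where Lemma~\ref{lem::aux2} supplies the needed linear independence---the plan is to prove it as an analytic identity, directly from the explicit representations of Section~\ref{subsec::holomorphic} rather than from probability.

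Concretely, I would first rewrite $F$ in closed form. For $x\in(y_{2N-1},y_{2N})$ one has $P_{\HH}(z,x;\by)=K(\by)\,\Im\,\partial_xU(z,x;\by)$ and, differentiating~\eqref{eqn::aux1}, $\partial_xU=-\big(V(z,x;\by)-\tfrac{K_{N,1}(x;\by)}{K(\by)}f(z;\by)\big)-L(x;\by)\,U(z,x;\by)$, so that
\[
P_{\HH}(z,x;\by)=-K(\by)\,\Im V(z,x;\by)+K_{N,1}(x;\by)\,\Im f(z;\by)-K(\by)\,L(x;\by)\,\Im U(z,x;\by),
\]
with $F(w,x;\by)=\partial_nP_{\HH}(z,x;\by)|_{z=w}/\PartF(w;\by)$ for $w\in(y_1,y_2)$. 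The key input is that the Schwarz--Christoffel integrand of $f$ in~\eqref{eqn::conformalmap_slitrect} and its variant $\tilde\omega_r$ in~\eqref{eqn::differential_var} (with the extra factor $(u-x)^{-2}$) obey, as differentials in $u$, the Coulomb‑gas rules $\partial_{y_j}=\tfrac{1}{2(u-y_j)}\cdot$ and $\partial_x=\tfrac{2}{u-x}\cdot$; differentiating the integrals in $y_i$ and $x$, integrating by parts once in $u$, and using the period conditions defining $\mu^{(\ell)},\tilde\mu^{(\ell)}$ (Lemma~\ref{lem::parameters} and~\eqref{eqn::nu_var}) produces the level‑two null‑vector relation appropriate to $\kappa=2$ for each building block. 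Taking $\partial_n$ at $z=w\in(y_1,y_2)$, where the harmonic functions involved are constant in $z$ so that $\partial_n$ commutes with $\partial_{y_i},\partial_x$, and dividing by $\PartF(w;\by)$---which contributes the term $2\,(\partial_w\log\PartF)\,\partial_wF$ through the quotient rule---assembles these identities into~\eqref{eqn::Poisson_partialn_PDE}; the symmetric form of $F$ in~\eqref{eqn::observablesymmetry} (Lemma~\ref{lem::sym}) lets one treat $\partial_x$ on the same footing as $\partial_w$.

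The main obstacle is that $\mu^{(\ell)},\tilde\mu^{(\ell)}$ (and $K$, $K_{N,1}$, $L$) depend on $(\by,x)$ only \emph{implicitly}, through the period matrices $R,\tilde R$; differentiating the integral formulas therefore forces one to compute $\partial_{y_i}\mu^{(\ell)}$ and $\partial_x\tilde\mu^{(\ell)}$ from the linear systems~\eqref{eqn::nu_var} and to check that all the resulting extra terms cancel---a cancellation for which the relation~\eqref{eqn::consrela} $K_{N,1}=K_{N,2}+L$ is a prototype, and which is the real substance of the computation. A second, more delicate point is regularity: to speak of $\partial_{y_i}F,\partial_wF$ and to exchange $\partial_n$ with parameter derivatives one needs joint smoothness of $P_{\HH}$ in $(x,\by)$, the ``smoothness of the Poisson kernel'' flagged in the introduction, which I would obtain by applying the reflection and maximum‑principle scheme of Lemma~\ref{lem::probuniqueness} and Lemma~\ref{lem::htilde} to difference quotients of the boundary value problems in the parameters. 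An alternative that avoids tracking the roots is a uniqueness argument: adjoin to the operator in~\eqref{eqn::Poisson_partialn_PDE} the $z$‑transport term $\tfrac{2}{z-w}\partial_z$ (with weight $0$ at $z$, since $P_{\HH}$ is conformally invariant in $z$), show that the result applied to $P_{\HH}(\cdot,x;\by)$ is again harmonic in $z$ with the boundary conditions of the $\tilde h$‑type functions of Section~\ref{subsec::holomorphic}, conclude by the uniqueness there that it is a known multiple of $f(\cdot;\by)$ or vanishes, and read off~\eqref{eqn::Poisson_partialn_PDE} by taking $\partial_n$ at $z=w$ and dividing by $\PartF(w;\by)$.
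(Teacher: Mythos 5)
Your primary route (direct differentiation of the Schwarz--Christoffel integrals and the null-vector cancellation, tracking $\partial_{y_i}\mu^{(\ell)}$ and $\partial_x\tilde\mu^{(\ell)}$ through the period matrices) is not what the paper does, and as written it is incomplete: you flag the cancellation and regularity issues yourself, and these are precisely where the work would be. In fact the ``Coulomb-gas rule'' $\partial_{y_j}=\tfrac{1}{2(u-y_j)}$ only applies to the explicit $y_j$-dependence of the integrand, while the slit parameters $\mu^{(\ell)}$ depend on $\by$ implicitly through the linear system of Lemma~\ref{lem::parameters}, so the extra terms you would need to cancel are not benign. There is no indication in the paper that this route closes cleanly.

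Your ``alternative'' at the end is essentially the paper's proof: one adjoins the $z$-transport term to the operator, applies the full operator $\LD$ to $P(z):=P_\HH(z,w;\by)/\PartF(w;\by)$, and runs the same reflection/maximum-principle uniqueness scheme as in Lemma~\ref{lem::probuniqueness}. But your sketch glosses over the one genuinely nontrivial step. Because $P(z)$ has a simple-pole-type singularity $-\tfrac{K(\by)}{\pi\PartF(w;\by)}\Im\tfrac{1}{z-w}$ at $z=w$ and $\LD$ includes $\partial_w^2$, the function $\LV:=\LD P$ is \emph{not} bounded near $w$ a priori; the paper records this by writing $\LV(z)=\Im\bigl(G_1(z)+G_2(z)/(z-w)\bigr)$ for holomorphic $G_1,G_2$. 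The uniqueness argument then only gives $\LV+\tfrac{\pi G_2(w)\PartF(w;\by)}{K(\by)}P=0$, i.e. $\LV$ is a multiple of $P$ (not of $f$, as you suggest), and one still has to show $G_2(w)=0$. The paper does this by the normalization comparison $\int_{y_{2N-1}}^{y_{2N}}\partial_n P(z)|_{z=x}\,\ud x=1$ while $\int_{y_{2N-1}}^{y_{2N}}\partial_n \LV(z)|_{z=x}\,\ud x=0$. Without this pole-coefficient argument your proof does not close; with it, your alternative coincides with the paper's.

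Two smaller remarks. First, since $P_\HH$ is real-valued rather than holomorphic, ``$\tfrac{2}{z-w}\partial_z$'' should be the real transport $\Re\bigl(\tfrac{2}{z-w}\bigr)\partial_{\Re z}+\Im\bigl(\tfrac{2}{z-w}\bigr)\partial_{\Im z}$, as in the paper's definition of $\LD$. Second, the observation that taking $\partial_n$ at $z=w$ commutes with the parameter derivatives because the relevant harmonic functions are locally constant in $z$ on $(y_1,y_2)$ is correct and is implicitly used when the paper reads off~\eqref{eqn::Poisson_partialn_PDE} from $\LV\equiv0$.
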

\begin{proof}
The proof is similar to the proof of~\cite[Corollary 5.9]{HanLiuWuUST}. We summarize it below briefly. Define
\[\LD:=\sum_{i=1}^{2N}\frac{2}{y_i-w}\partial_{y_i}+2\frac{\partial_w\PartF(w; \by)}{\PartF(w; \by)}\partial_w+\partial_w^2+\Re\left(\frac{2}{z-w}\right)\partial_x+\Im\left(\frac{2}{z-w}\right)\partial_y.\]
Define \[P(z):=\frac{P_\HH(z,w;\by)}{\PartF(w; \by)},\quad \text{for }z\in\HH.\]
It suffices to show that $\LV:=\LD P=0$. By direct computation, $\LV$ is continuous on $\overline\HH\setminus\{w\}$ and there exist two holomorphic functions $G_1$ and $G_2$ such that
\[\LV(z)=\Im\left(G_1(z)+\frac{G_2(z)}{z-w}\right).\]
Moreover, $\LV$ satisfies the following boundary conditions:
\begin{itemize}
\item
$\partial_n \LV=0$ on $\R\setminus\cup_{i=1}^{N}(y_{2i-1},y_{2i})$ and $\int_{y_{2i-1}}^{y_{2i}}\partial_n \LV \ud z=0$ for $2\le i\le N-1$; 
\item $\LV=0$ on $(y_{2N-1},y_{2N})$;
\item
$\LV(z)+G_2(w)\Im\frac{1}{z-w}=0$ on $(y_1,y_2)$.
\end{itemize}
From the definition, $P$ also satisfies the first boundary condition and $P(z)+\frac{ K(\by)}{\pi \PartF(w; \by)}\Im\frac{1}{z-w}$ is bounded when $z\to w$. By the similar argument in the proof of Lemma~\ref{lem::probuniqueness}, we have
\[\LV(z)+\frac{\pi G_2(w)\PartF(w; \by)}{K(\by)}P(z)=0.\] 
But by direct computation, we have
\[\int_{y_{2N-1}}^{y_{2N}}\ud x\partial_n P(z)|_{z=x}=1,\quad\int_{y_{2N-1}}^{y_{2N}}\ud x\partial_n \LV(z)|_{z=x}=0.\]
This implies that $G_2(w)=0$. Thus, $\LV$ is bounded on $\overline\HH$. By the same argument in the proof of Lemma~\ref{lem::probuniqueness}, we have $\LV=0$ and this completes the proof.
\end{proof}
\begin{lemma}\label{lem::aux2}
Recall that $L$ is defined in~\eqref{eqn::auxfunctiong}. Suppose $J(\cdot)$ is smooth on  $(y_{2N-1},y_{2N})$ and is not identically zero. Then, for fixed $y_1<w<y_2<\ldots<y_{2N-1}<y_{2N}$, the function
\[v(x):=J(x)+L(x;\by)\int_x^{y_{2N}}\ud r J(r)\exp\left(\int_x^rL(s;\by)ds\right)\]
is not identically zero on $(y_{2N-1},y_{2N})$.
\end{lemma}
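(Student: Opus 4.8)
The plan is to recognize $v$ as (minus) the derivative of a smooth auxiliary function and then finish by a short ODE argument. I would set
\[
G(x):=\int_x^{y_{2N}}J(r)\exp\!\left(\int_x^r L(s;\by)\,\ud s\right)\ud r,
\]
which is well-defined by hypothesis (its convergence is implicit in the very definition of $v$), so that $v(x)=J(x)+L(x;\by)\,G(x)$ on $(y_{2N-1},y_{2N})$. The first step is to differentiate $G$: by the fundamental theorem of calculus the contribution of the lower endpoint $r=x$ is $-J(x)$, and differentiating the exponential with respect to $x$ produces the factor $-L(x;\by)$ (differentiation under the integral sign is justified by a routine dominated-convergence argument on compact subintervals of $(y_{2N-1},y_{2N})$). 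Hence
\[
G'(x)=-J(x)-L(x;\by)\,G(x)=-v(x),\qquad x\in(y_{2N-1},y_{2N}).
\]

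With this identity in hand I would argue by contradiction. If $v\equiv0$ on $(y_{2N-1},y_{2N})$, then $G'\equiv0$, so $G$ equals a constant $c$ there, and $v=J+L\,G\equiv0$ forces $J\equiv-c\,L(\cdot;\by)$. Suppose $c\ne0$. Substituting $J=-cL$ into the definition of $G$ and using $\partial_r\exp\!\big(\int_x^r L(s;\by)\,\ud s\big)=L(r;\by)\exp\!\big(\int_x^r L(s;\by)\,\ud s\big)$ turns the integrand into an exact derivative, so
\[
c=G(x)=-c\int_x^{y_{2N}}L(r;\by)\exp\!\left(\int_x^r L(s;\by)\,\ud s\right)\ud r .
\]
A direct computation from~\eqref{eqn::auxfunctiong} shows that $L(\cdot;\by)$ is positive and non-integrable at $y_{2N}$ (the singular behavior of $L$ there comes from the factor $(u-y_{2N})^{1/2}$ in the denominator of $g$, which yields $L(x;\by)=\tfrac12(y_{2N}-x)^{-1}+O(1)$ as $x\to y_{2N}^-$); consequently the integral on the right diverges to $+\infty$, contradicting the finiteness of $G(x)=c$. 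Therefore $c=0$, and then $v=J$ on $(y_{2N-1},y_{2N})$, contradicting the hypothesis that $J$ is not identically zero. This proves the lemma.

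The only genuinely analytic input is this boundary behavior of $L(\cdot;\by)$ at $y_{2N}$, and that is the point I expect to be worth spelling out carefully; everything else is the elementary computation $G'=-v$. The same input gives an alternative way to eliminate $c$: one shows directly that $G(x)\to0$ as $x\to y_{2N}^-$ by writing, for a fixed $x_0<y_{2N}$, $G(x)=\exp\!\big({-}\int_{x_0}^x L(s;\by)\,\ud s\big)\int_x^{y_{2N}}J(r)\exp\!\big(\int_{x_0}^r L(s;\by)\,\ud s\big)\,\ud r$, where the prefactor tends to $0$ because $\int_{x_0}^{y_{2N}}L(s;\by)\,\ud s=+\infty$ and the remaining integral, being a tail of the convergent integral $G(x_0)$, also tends to $0$; hence $c=0$.
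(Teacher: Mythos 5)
Your route is correct and in fact slightly cleaner than the paper's at the reduction step. The paper divides the identity $v\equiv 0$ by $L$, differentiates $-J/L$, and obtains $J=C_iL$ piecewise between the zeros of $L(\cdot;\by)$, then glues the constants $C_i$ by a continuity argument. Observing $G'=-v$ avoids all of that: $v\equiv 0$ gives $G\equiv c$ and $J=-cL$ in one step, with no case analysis on the zero set of $L$.

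The issue is your final step. You invoke $L(x;\by)=\tfrac12(y_{2N}-x)^{-1}+O(1)$ as $x\to y_{2N}^-$. From
\[
L(x;\by)=\partial_u\log g(u,x;\by)\big|_{u=x}=\sum_{\ell=1}^{N-2}\frac{1}{x-\tilde\mu^{(\ell)}}-\frac12\sum_{j=1}^{2N}\frac{1}{x-y_j},
\]
the $j=2N$ term does contribute $\tfrac{1}{2(y_{2N}-x)}$, but the slit-tip parameters $\tilde\mu^{(\ell)}$ depend on $x$ through~\eqref{eqn::R_var}--\eqref{eqn::polynomial_var}, so to get the $O(1)$ bound you would need to show that these roots stay bounded away from $y_{2N}$ as $x\to y_{2N}^-$. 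That is plausible but is an extra analytic fact you have not established, and it is the one genuine gap in what you wrote.

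Fortunately the asymptotic of $L$ is not needed at all, because the integral you reached evaluates exactly. You have $c=G(x)=-c\int_x^{y_{2N}}L(r;\by)\exp\bigl(\int_x^r L(s;\by)\,\ud s\bigr)\ud r$, and the integrand is $\tfrac{d}{dr}\exp\bigl(\int_x^r L(s;\by)\,\ud s\bigr)$, so
\[
c=-c\left(\exp\Bigl(\int_x^{y_{2N}}L(s;\by)\,\ud s\Bigr)-1\right),\qquad\text{i.e.}\qquad c\exp\Bigl(\int_x^{y_{2N}}L(s;\by)\,\ud s\Bigr)=0.
\]
If $c\neq 0$, the finiteness of $G(x)=c$ already forces $\int_x^{y_{2N}}L<\infty$, so the exponential is a positive real number and the displayed identity is impossible. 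Hence $c=0$, thus $J\equiv 0$, contradiction. This is exactly how the paper concludes (modulo a sign typo in its evaluation of the same integral), and it replaces your appeal to the boundary behavior of $L$ by a one-line computation, closing the gap.
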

\begin{proof}
Note that $L(\cdot;\by)$ has finitely many roots on $(y_{2N-1},y_{2N})$. We denote all the roots by $z_1<\ldots<z_n$. Define $z_0:=y_{2N-1}$ and define $z_{n+1}:=y_{2N}$. Suppose the function $v$ is identically zero. Then, for $0\le i\le n$ and for all $x\in(z_i,z_{i+1})$, we have
\begin{equation}\label{eqn::nozero}
-\frac{J(x)}{L(x;\by)}=\int_x^{y_{2N}}\ud r J(r)\exp\left(\int_x^rL(s;\by)ds\right).
\end{equation}
This implies 
\[\partial_x\left(-\frac{J(x)}{L(x;\by)}\right)=-J(x)-L(x;\by)\int_x^{y_{2N}}\ud r J(r)\exp\left(\int_x^rL(s;\by)ds\right)=0.\]
Thus, there exists a constant $C_i$ such that 
\[J(x)=C_iL(x;\by), \quad \text{for }x\in (z_i, z_{i+1}).\]
By the continuity, we have $C_i=C$ for all $0\le i\le n$.
Plugging into~\eqref{eqn::nozero}, we have
\[-C=C\int_x^{y_{2N}}\ud r L(r;\by)\exp\left(\int_x^rL(s;\by)ds\right)=C\left(1-\exp\left(\int_x^{y_{2N}}L(s;\by)ds\right)\right),\]
which is a contradiction.
\end{proof}
\begin{proof}[Proof of Proposition~\ref{prop::lawofgamma}]
Recall that $g$ is defined in~\eqref{eqn::auxfunctiong} and $K(\by)$ denotes the height of the rectangle $f(\HH)$. For any $y_1<w<y_2<\ldots<y_{2N-1}<x<y_{2N}$, define
\[J(w,x;\by):=\partial_w\left(\frac{K(\by)g(w,x;\by)}{\pi g(x,x;\by)\PartF(w; \by)(w-x)^2}\right).\]
Note that $J(w,\cdot;\by)$ is not identically zero.
By the explicit form of $F$, we have
\begin{equation*}
\partial_wF(w,x;y_1,\ldots,y_{2N})=-J(w,x;\by)-L(x;\by)\int_x^{y_{2N}}\ud r J(w,r;\by)\exp\left(\int_x^rL(s;\by)ds\right).
\end{equation*}
By Lemma~\ref{lem::aux2}, there exists $x\in(y_{2N-1}, y_{2N})$ such that $\partial_wF(w,x;\by)\neq 0$. By implicit function theorem, $w$ is locally a smooth function of $(F,x,\by)$. Thus, we can choose deterministic $x$, such that $W_t$ is locally a smooth function of $(M_t,g_t(x),g_t(y_1),\ldots,g_t(y_{2N}))$. By It\^{o}'s formula, $(W_t,t\ge 0)$ is a semimartingale. See more detials in~\cite[Proof of Theorem 1.6]{HanLiuWuUST} . Denote by $R_t$ the drift term of $W_t$. Combining It\^{o}'s formula and~\eqref{eqn::Poisson_partialn_PDE}, we have
\begin{align}\label{eqn::final}
\partial_w F\left(\ud R_t-2\frac{\partial_w\PartF(W_t;g_t(y_1),\ldots,g_t(y_{2N}))}{\PartF(W_t;g_t(y_1),\ldots,g_t(y_{2N}))}\ud t\right)+\frac{1}{2}\partial_w^2 F\left(\ud \langle W\rangle_t-2\ud t\right)=0. 
\end{align}
It suffices to show that for any $\tilde{y}_1<m<\tilde{y}_2<\ldots<\tilde{y}_{2N}$, there exist $\tilde{x}_1,\tilde {x}_2\in(\tilde{y}_{2N-1},\tilde{y}_{2N})$, such that 
\begin{equation}\label{eqn::neq}
\partial_m F(m,\tilde{x}_1;\tilde{\by})\partial^2_m F(m,\tilde{x}_2;\tilde{\by})\neq \partial_m F(m,\tilde{x}_2;\tilde{\by})\partial^2_m F(m,\tilde{x}_1;\tilde{\by}),\quad \text{where }\tilde{\by}=(\tilde{y}_1, \ldots, \tilde{y}_{2N}). 
\end{equation}
Assuming this is true. 
Then, by setting $\tilde{y}_i=g_t(y_i)$ for $1\le i\le N$ and $m=W_t$ and $\tilde{x}_i=g_t^{-1}(x_i)$ for $i=1,2$ and plugging into~\eqref{eqn::final}, we have
\[\ud R_t=2\frac{\partial_w\PartF(W_t;g_t(y_1),\ldots,g_t(y_{2N}))}{\PartF(W_t;g_t(y_1),\ldots,g_t(y_{2N}))}\ud t\quad\text{and}\quad \ud \langle W\rangle_t=2\ud t\]
as desired.

It remains to show~\eqref{eqn::neq}. If this is not the case, there exists a continuous function $C(m;\tilde{\by})$ such that 
\[\partial_m^2F(m,\tilde{x};\tilde{\by})=C(m;\tilde{\by})\partial_m F(m,\tilde{x};\tilde{\by}),\quad \text{for all }\tilde{x}\in(\tilde{y}_{2N-1},\tilde{y}_{2N}).\]
This implies that, for all $m\in(\tilde{y}_{2N-1},\tilde{y}_{2N})$, 
\begin{align*}
&(C(m;\tilde{\by})J(m,\tilde{x};\tilde{\by})-\partial_mJ(m,\tilde{x};\tilde{\by}))\\
&+L(\tilde{x};\tilde{\by})\int_{\tilde{x}}^{\tilde{y}_{2N}}\ud r (C(m;\tilde{\by})J(m,\tilde{x};\tilde{\by})-\partial_mJ(m,\tilde{x};\tilde{\by}))\exp\left(\int_{\tilde{x}}^rL(s;\tilde{\by})ds\right)=0.
\end{align*}
By Lemma~\ref{lem::aux2}, we have
\[C(m;\tilde{\by})J(m,\tilde{x};\tilde{\by})-\partial_mJ(m,\tilde{x};\tilde{\by})=0,\quad\text{for all }m\in(\tilde{y}_{2N-1},\tilde{y}_{2N}).\]
Define 
\[S(m,\tilde{x};\tilde{\by}):=\partial_m\left(\frac{g(m,\tilde{x};\tilde{\by})}{\PartF(m; \tilde{\by})(m-\tilde{x})^2}\right).\]
This implies that 
\begin{equation}\label{eqn::need}
C(m;\tilde{\by})S(m,\tilde{x};\tilde{\by})-\partial_mS(m,\tilde{x};\tilde{\by})=0,\quad\text{for all }m\in(\tilde{y}_{2N-1},\tilde{y}_{2N}).
\end{equation}
From the construction in~\eqref{eqn::auxfunctiong}, $g(m,\cdot;\tilde{\by})$ can be holomorphically extended to 
\[O:=\C\setminus\left((\tilde{y}_2,\tilde{y}_{2N-1})\cup_{i=1,2N}\{\tilde{y}_i+\ii t:t\in[0,+\infty)\}\right).\] Thus, the following function is holomorphic on $O$: 
\[k(\cdot, m;\tilde{\by}):=C(m;\tilde{\by})S(m,\cdot;\tilde{\by})-\partial_mS(m,\cdot;\tilde{\by}).\] 
Thus, by~\eqref{eqn::need}, the function $k(\cdot,m;\tilde{\by})$ is identically zero.
This implies that~\eqref{eqn::need} holds for $z\in O$. By letting $z\to +\infty$, we have $C(m;\tilde{\by})=0$. But, by letting $z\to m$, we have $\partial_mS(m,z;\tilde\by)\to \infty$. This is a contradiction.
This completes the proof.
\end{proof}

\begin{proof}[Proof of Theorem~\ref{thm::lerwconv}]
Corollary~\ref{coro::LX} implies the first statement and Proposition~\ref{prop::lawofgamma} implies the second statement.
\end{proof}

We end this section by a discussion on the connection between~\eqref{eqn::lerw_loewner} and $\SLE_{\kappa}(\rho)$ process. From~\eqref{eqn::conformalmap_slitrect}, we have 
\[\frac{f''(z)}{f'(z)}=\sum_{\ell=1}^{N-2}\frac{1}{z-\mu^{(\ell)}}-\frac{1}{2}\sum_{j=1}^{2N}\frac{1}{z-y_j}. \]
Plugging into~\eqref{eqn::lerw_loewner}, we see that the conditional law of $\varphi(\gamma)$ given $\LX$ is the Loewner chain with the following driving function 
\begin{equation}\label{eqn::lerwloewner_aux}
\ud W_t=\sqrt{2}\ud B_t+\left(\sum_{\ell=1}^{N-2}\frac{2}{W_t-\mu^{(\ell)}_t}-\sum_{j=1}^{2N}\frac{1}{W_t-g_t(y_j)}\right)\ud t,
\end{equation}
where $\mu^{(\ell)}=\mu^{(\ell)}(y_1, \ldots, y_{2N})$ are parameters in~\eqref{eqn::conformalmap_slitrect} and Lemma~\ref{lem::parameters} and 
\[\mu^{(\ell)}_t=\mu^{(\ell)}(g_t(y_1), \ldots, g_t(y_{2N})).\]

When $N=2$, Eq.~\eqref{eqn::lerwloewner_aux} becomes 
\[\ud W_t=\sqrt{2}\ud B_t+\sum_{j=1}^{4}\frac{-1}{W_t-g_t(y_j)}\ud t.\]
This is the same as $\SLE_2(-1,-1; -1,-1)$ as proved in~\cite{HanLiuWuUST}. When $N\ge 3$, the term $\mu^{(\ell)}_t$ plays an essential role in~\eqref{eqn::lerwloewner_aux} and the law of $(W_t, t\ge 0)$ does not belong to the family of $\SLE_2(\rho)$ process. 


\end{document}